\newtheorem{theo}{Theorem}[section]
\newtheorem{prop}[theo]{Proposition}
\newtheorem{coro}[theo]{Corollary}
\newtheorem{lemm}[theo]{Lemma}
\theoremstyle{definition}
\theoremstyle{remark}
\newtheorem{rema}[theo]{Remark}
\newcommand{\Op}{\operatorname{Op}}
\newcommand{\nwc}{\newcommand}
\nwc{\eps}{\epsilon}
\nwc{\ep}{\epsilon}
\nwc{\vareps}{\varepsilon}
\nwc{\Oph}{\operatorname{Op}_\hbar}
\nwc{\la}{\langle}
\nwc{\ra}{\rangle}
\nwc{\mf}{\mathbf} 
\nwc{\blds}{\boldsymbol} 
\nwc{\ml}{\mathcal} 
\nwc{\defeq}{\stackrel{\rm{def}}{=}}
\nwc{\cE}{\ml{E}}
\nwc{\cN}{\ml{N}}
\nwc{\cO}{\ml{O}}
\nwc{\cP}{\ml{P}}
\nwc{\cU}{\ml{U}}
\nwc{\cV}{\ml{V}}
\nwc{\cW}{\ml{W}}
\nwc{\tU}{\widetilde{U}}
\nwc{\IN}{\mathbb{N}}
\nwc{\IR}{\mathbb{R}}
\nwc{\IZ}{\mathbb{Z}}
\nwc{\IC}{\mathbb{C}}
\nwc{\IT}{\mathbb{T}}
\nwc{\IS}{\mathbb{S}}
\nwc{\tP}{\widetilde{P}}
\nwc{\tPi}{\widetilde{\Pi}}
\nwc{\tV}{\widetilde{V}}
\nwc{\supp}{\operatorname{supp}}
\nwc{\rest}{\restriction}
\begin{document}

\title[Concentration and non-concentration on Zoll manifolds]{Concentration and non-concentration for the Schr\"odinger evolution on Zoll manifolds}

\author[Fabricio Maci\`a]{Fabricio Maci\`a}
\author[Gabriel Rivi\`ere]{Gabriel Rivi\`ere}

\address{Universidad Polit\'ecnica de Madrid. DCAIN, ETSI Navales. Avda. Arco de la Victoria s/n. 28040 Madrid, Spain}
\email{Fabricio.Macia@upm.es}
\thanks{FM takes part into the visiting faculty program of ICMAT and is
partially supported by grants ERC Starting Grant 277778 and
MTM2013-41780-P (MEC)}

\address{Laboratoire Paul Painlev\'e (U.M.R. CNRS 8524), U.F.R. de Math\'ematiques, Universit\'e Lille 1, 59655 Villeneuve d'Ascq Cedex, France}
\email{gabriel.riviere@math.univ-lille1.fr}
\thanks{GR is partially supported by the Agence Nationale de la Recherche through the
Labex CEMPI (ANR-11-LABX-0007-01) and the ANR project GeRaSic (ANR-13-BS01-
0007-01)}

\begin{abstract}
We study the long time dynamics of the Schr\"odinger equation on Zoll manifolds. We establish criteria under which the solutions of the Schr\"odinger equation can or cannot concentrate on a given closed geodesic. As an application, we derive some results on the set of semiclassical measures for eigenfunctions of Schr\"odinger operators: we prove that adding a potential to the Laplacian on the sphere results on the the existence of geodesics $\gamma$ such that $\delta_\gamma$ cannot be obtain as a semiclassical measure for some sequence of eigenfunctions. We also show that the same phenomenon occurs for the free Laplacian on certain Zoll surfaces. 
\end{abstract}

\maketitle

\section{Introduction}

In this article we are interested in understanding the dynamics of Schrödinger equations and the structure of eigenfunctions of Schrödinger operators on Zoll manifolds. Recall that a \emph{Zoll manifold} is a smooth, connected, compact, Riemannian manifold without boundary $(M,g)$ such that \emph{all its geodesics are closed}. This means that, for every $x$ in $M$, all the geodesics issued from $x$ come back to $x$. Thanks to Theorem of Wadsley (see \cite{Be78} -- section 7.B), the geodesic flow $\varphi^s$ acting on the unit cotangent bundle $S^*M$ of such a manifold is periodic, meaning that all its trajectories have a minimal common period $l>0$.\footnote{Note that this does not mean that all the geodesics have length equal to~$l$. There may exist exceptional geodesics whose length is strictly smaller than $l$, the Lens spaces (quotients of $\IS^{2m-1}$ by certain finite cyclic groups of isometries) provide an example of this -- see Ex.~$2.43$ in~\cite{Hat02}.} Using the terminology of~\cite{Be78} -- chapter $7$, we will say that the metric $g$ is a $P_l$-metric, or that $(M,g)$ is a $P_l$-manifold. Similarly, in the case where all the geodesics have the same length $l$, we will say that $g$ is a $C_l$-metric or that $(M,g)$ is a $C_l$-manifold. The main examples of $C_l$-manifolds are the compact rank one symmetric spaces (that we will sometimes abbreviate by CROSS) and certain surfaces of revolution -- see chapters~3 and 4 in~\cite{Be78}. It is also known that the set of  $C_{l}$-metrics on $\IS^2$ that can be obtained as smooth deformations of the canonical metric is infinite dimensional and contains many metrics that are not of revolution. The characterization of such deformations is a remarkable result by Guillemin \cite{Gu76}.

Our goal here is to understand the long time dynamics of the following Schr\"odinger equation:
\begin{equation}\label{e:nonsemiclassical-schr}
i\partial_tv(t,x)=\left(-\frac{1}{2}\Delta_g+V(x)\right)v(t,x),\quad v|_{t=0}=u\in L^2(M),
\end{equation}
or the behavior of eigenfunctions:
\begin{equation}\label{e:eigenf}
\left(-\frac{1}{2}\Delta_g+V(x)\right)u(x)=\lambda^2 u(x), \quad ||u||_{L^2(M)}=1,
\end{equation}
in the high-frequency limit $\lambda \rightarrow \infty$.
As usual, $\Delta_g$ is the Laplace Beltrami operator induced by the Riemannian metric $g$ on $M$, and we shall assume that $V$ is in $\ml{C}^{\infty}(M,\IR)$.

The study of the spectral properties of the operator $-\frac{1}{2}\Delta_g+V$ in this geometric context is a problem which has a long history in microlocal analysis starting with the works of Duistermaat-Guillemin~\cite{DuGu75, Gu78a, Gu78b}, Weinstein~\cite{We77} and Colin de Verdi\`ere~\cite{CdV79}. Many other important results on the fine structure of the spectrum of Zoll manifolds were obtained both in the microlocal framework~\cite{Gu81, Ur85, UrZe93, Ze96, Ze97}, and in the semiclassical setting~\cite{Ch80, HeRo84, HaHiSj14} -- see also~\cite{Hi02, HiSj04} in the nonselfadjoint setting.

In this this article we use similar techniques to those that were originally developed in order to study of the spectrum of Schrödinger operators to actually provide some results on the long-time dynamics of the Schr\"odinger evolution~\eqref{e:nonsemiclassical-schr} and the structure of high-frequency eigenfunctions~\eqref{e:eigenf}.

\subsection{Concentration and non-concentration of eigenfunctions}
Let us start by describing our results in the context of eigenfunctions of Schrödinger operators, as they are somewhat simpler to state. We are mainly interested in analyzing how the mass $|u|^2$ of a high-frequency eigenfunction satisfying \eqref{e:eigenf} distributes over $M$. More precisely, consider the set $\mathcal{N}(\infty)$ of probability measures in $M$ that are obtained as follows. A probability measure $\nu$ belongs to $\ml{N}(\infty)$ provided there exist a sequence of eigenfunctions $(u_n)$ :
$$-\frac{1}{2}\Delta_g u_n+Vu_n=\lambda_n^2 u_n, \quad ||u_n||_{L^2(M)}=1,$$
with eigenvalues satisfying $\lambda_n\rightarrow\infty$ such that 
$$\lim_{n\rightarrow\infty}\int_M a(x)|u_n|^2(x)d\operatorname{vol}_g(x)=\int_M a(x)\nu(dx),\quad \text{for every }a\in\ml{C}(M).$$
Measures in $\ml{N}(\infty)$ therefore describe the asymptotic mass distribution sequences of eigenfunctions $(u_n)$ whose corresponding eigenvalues tend to infinity. The problem of characterizing the probability measures in $\mathcal{N}(\infty)$ has attracted a lot of attention in the last forty years. 

In the case of Zoll manifolds, it is well known that $\ml{N}(\infty)$ is contained in $\ml{N}_g$ which is, by definition, the closed convex hull (with respect to the weak-$\star$ topology) of the set of probability measures $\delta_\gamma$, where $\gamma$ is a geodesic of $(M,g)$. Recall that
$$\int_M a(x)\delta_\gamma(dx) = \frac{1}{l}\int_0^l a(\gamma(s))ds,$$
where $l$ denotes the length of $\gamma$ and the parametrization $\gamma(s)$ has unit speed. 

In the case of $\IS^2$ endowed with its canonical metric, it was proved that a ``generic'' orthonormal basis of eigenfunctions satisfies the quantum unique ergodicity property~\cite{Ze94, VdK97}. In particular, in this case, the normalized Riemannian volume is the only accumulation point of the sequence of eigenfunctions under consideration. Note that quantum ergodicity properties were also proved recently for sequences of eigenfunctions on $\IS^d$ satisfying certain symmetry assumptions~\cite{KR14, BrLeLi15}. Yet, there are in fact some situations for which one has
$$\mathcal{N}(\infty)=\ml{N}_g,$$
which is in a certain sense the opposite situation to quantum ergodicity. Jakobson and Zelditch proved in \cite{JaZe99} that this holds when $(M,g)$ is the sphere $\IS^d$ equipped with its canonical metric $g=\text{can}$ and the potential $V$ vanishes identically. This was also shown to hold for general Compact Rank-One Symmetric Spaces (see \cite{Ma08}) and for any manifold of positive constant curvature (see \cite{AzM10} --  this analysis relies on the study of eigenfunctions of the canonical Laplacian on the sphere that are invariant by certain groups of isometries); in both cases one has to assume that $V=0$. 
To the authors' knowledge, the question of whether this is always the case when the potential $V$ does not vanish identically or when $(M,g)$ is a Zoll manifold that is not isometric to a CROSS remains open. 

Here we answer these questions  by the negative. It turns out that these regimes are somewhat intermediate between the quantum ergodicity results and the results on a CROSS. Let us first introduce some notations. Denote by $\mathring{T}^*M$ the cotangent bundle of $M$ with the zero section removed. The \emph{Radon transform} of $V$ is defined as
\begin{equation}\label{e:radon}
\ml{I}_g(V)(x,\xi):=\frac{\|\xi\|_x}{l}\int_0^{\frac{l}{\|\xi\|_x}}V\circ\pi\circ
\varphi^{\tau}(x,\xi)d\tau,\quad (x,\xi)\in \mathring{T}^*M.
\end{equation}
Above, $\pi:T^*M \longrightarrow M$ stands for the canonical projection, $\varphi^\tau$ is the geodesic flow of $(M,g)$ acting on the cotangent bundle, and $l$ is the minimal common period of the trajectories of $\varphi^\tau|_{S^*M}$. Clearly, $\ml{I}_g(V)\in\ml{C}^\infty(\mathring{T}^*M)$ is zero-homogeneous with respect to the variable $\xi$. We shall consider the projection onto $M$ of the set of critical points of the Radon transform of $V$:
$$\ml{C}(V)=\{x\in M : d_{(x,\xi)} \ml{I}_g(V) = 0 \text{ for some } \xi\in T^*_x M\setminus \{0\}\}.$$
The set $\ml{C}(V)$ is a union of geodesics of $(M,g)$ and if the critical points of $\ml{I}_g(V)$ are non degenerate in an appropriate sense, $\ml{C}(V)$  consists in a finite number of geodesics (see Section \ref{s:invariance}). Note in particular that if $d_{(x,\xi)}\ml{I}_g(V)=0$, then the geodesic issued from $(x,\xi)$ is included in $\ml{C}(V)$. As an application of Theorem~\ref{t:theo-zoll} below, one has
\begin{theo}\label{t:noncon}
Suppose that $(M,g)$ is a Compact Rank-One Symmetric Space and that $\ml{C}(V)\neq M$. Then there exist infinitely many geodesics $\gamma$ of $(M,g)$ such that $\nu(\gamma)=0$ for every $\nu\in\ml{N}(\infty)$. In particular, $\delta_\gamma\not\in \mathcal{N}(\infty)$, and
$$\mathcal{N}(\infty)\neq\ml{N}_g.$$
\end{theo}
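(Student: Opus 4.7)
The plan is to deduce the theorem from Theorem~\ref{t:theo-zoll}, which we expect to supply a non-concentration criterion of the following form: if a closed geodesic $\gamma$ of $(M,g)$ has its tangent lift in $\mathring{T}^*M$ disjoint from the critical set of the Radon transform $\ml{I}_g(V)$, then $\nu(\gamma)=0$ for every $\nu\in\ml{N}(\infty)$.

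The starting observation is that criticality of $\ml{I}_g(V)$ is a flow-invariant notion. Indeed, the defining identity $\ml{I}_g(V)=\ml{I}_g(V)\circ\varphi^\tau$, together with invertibility of $d\varphi^\tau$, yields via the chain rule that $d_{(x,\xi)}\ml{I}_g(V)=0$ if and only if $d_{\varphi^\tau(x,\xi)}\ml{I}_g(V)=0$, so the critical set $\operatorname{Crit}(\ml{I}_g(V))\subset\mathring{T}^*M$ is a union of full orbits of the geodesic flow. Consequently the tangent lift $T\gamma$ of any closed geodesic $\gamma$ satisfies the dichotomy: either $T\gamma\subset\operatorname{Crit}(\ml{I}_g(V))$, or $T\gamma\cap\operatorname{Crit}(\ml{I}_g(V))=\emptyset$.

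Using the hypothesis $\ml{C}(V)\neq M$, the complement $M\setminus\ml{C}(V)$ is a non-empty open subset of $M$; fix a point $x_0$ in it. By the very definition of $\ml{C}(V)$, for every covector $\xi\in T_{x_0}^*M\setminus\{0\}$ the pair $(x_0,\xi)$ lies outside $\operatorname{Crit}(\ml{I}_g(V))$, and the flow-invariance of $\operatorname{Crit}(\ml{I}_g(V))$ then forces the entire tangent lift of the geodesic through $(x_0,\xi)$ to avoid the critical set. Since $(M,g)$ is a CROSS of dimension $d\geq 2$ (the hypothesis $\ml{C}(V)\neq M$ excludes the one-dimensional case, where $\ml{I}_g(V)$ is constant on each component of $S^*M$), the set of closed geodesics through $x_0$ is parametrized by the projective space $P(T_{x_0}M)$, which is uncountable. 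This yields an infinite family $\{\gamma_\alpha\}$ of closed geodesics, each satisfying $T\gamma_\alpha\cap\operatorname{Crit}(\ml{I}_g(V))=\emptyset$.

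Applying the criterion of Theorem~\ref{t:theo-zoll} to each $\gamma_\alpha$ gives $\nu(\gamma_\alpha)=0$ for every $\nu\in\ml{N}(\infty)$, hence $\delta_{\gamma_\alpha}\notin\ml{N}(\infty)$ and $\ml{N}(\infty)\neq\ml{N}_g$. The real difficulty of the argument therefore lies upstream, in establishing the non-concentration criterion itself: the expected approach is a second-microlocal analysis along $\gamma$ combined with a Weinstein--Duistermaat--Guillemin-type averaging adapted to the periodicity of $\varphi^\tau$, in which $\ml{I}_g(V)$ arises as the first non-trivial subprincipal correction to the averaged Schr\"odinger operator, and the absence of its critical points on $T\gamma$ provides a transverse transport mechanism that rules out any concentration of semiclassical mass on $\gamma$.
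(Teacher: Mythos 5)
Your deduction follows essentially the same route as the paper: Theorem~\ref{t:noncon} is obtained by applying Theorem~\ref{t:theo-zoll} to the (infinitely many) geodesics through a point $x_0\notin\ml{C}(V)$, and your observation that $\operatorname{Crit}(\ml{I}_g(V))$ is a union of orbits of the geodesic flow is indeed part of the paper's mechanism. The one step you gloss over is the only nontrivial part of the deduction: Theorem~\ref{t:theo-zoll} is a statement about the time-dependent sets $\ml{N}(\tau,\eps)$, valid for a.e.\ $t$, and not directly about $\ml{N}(\infty)$; you simply postulate that it ``supplies'' a criterion for $\ml{N}(\infty)$. The bridge is Remark~\ref{r:eigen}: taking $\hbar_n=\lambda_n^{-1}$ and $\eps_\hbar=\hbar$, an eigenfunction $u_n$ of $-\frac{1}{2}\Delta_g+V$ satisfies $P_{\eps}(\hbar)u_n=u_n$, hence $|e^{-\frac{it\tau_\hbar}{\hbar}P_{\eps}(\hbar)}u_n|^2=|u_n|^2$ for \emph{every} time scale $\tau$, and eigenfunctions satisfy \eqref{e:hosc}--\eqref{e:shosc}; therefore $\ml{N}(\infty)\subset\ml{N}(\tau,\hbar)$ for all $\tau$, one chooses $\tau_\hbar$ with $\tau_\hbar\hbar^2\to+\infty$ so that the hypothesis $\tau_\hbar\eps_\hbar^2\to+\infty$ holds (with case (2), the CROSS assumption, since $\eps_\hbar=\hbar$ does not satisfy case (1)), and the a.e.-$t$ conclusion upgrades to $\nu(\gamma)=0$ because the limit measure coming from eigenfunctions is independent of $t$. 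Two minor points: the hypothesis of Theorem~\ref{t:theo-zoll} is that $\gamma$ is not contained in $\ml{C}(V)$, not that its cotangent lift avoids $\operatorname{Crit}(\ml{I}_g(V))$ -- harmless here, since your geodesics pass through $x_0\notin\ml{C}(V)$ and thus satisfy the stated hypothesis -- and your closing speculation about a second-microlocal proof of the criterion is not how the paper proceeds (it uses Weinstein-type averaging and the resulting extra invariance of $\mu(t)$ under the Hamiltonian flow of $\ml{I}_g(V)$, combined with finiteness of the measure), though this is immaterial since you invoke Theorem~\ref{t:theo-zoll} as given.
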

When $d=2$ and $M$ is orientable, we are able to obtain a more precise result. 
\begin{theo}\label{t:noncon2d}
Suppose that $(M,g)=(\IS^2,\operatorname{can})$. Then every $\nu\in\mathcal{N}(\infty)$ can be written as:
$$\nu= f  \operatorname{vol}_g+\nu_{\operatorname{sing}},$$
where $f\in L^1(\IS^2)$ and $\nu_{\operatorname{sing}}$ is a nonnegative measure supported on $\ml{C}(V)$. When $\ml{C}(V)$ consists in a finite number of geodesics $\gamma_1,...\gamma_n$ then one has:
$$\nu= f \operatorname{vol}_g+\sum_{j=1}^n c_j\delta_{\gamma_j},$$
for some $c_j\geq 0$.
\end{theo}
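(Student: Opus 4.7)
The plan is to leverage Theorem~\ref{t:theo-zoll} in the two-dimensional setting and exploit the fact that for $(\IS^2,\operatorname{can})$ the symplectic reduction of $S^*\IS^2$ by the geodesic flow is again a two-dimensional sphere, so the Hamiltonian flow of the Radon transform $\ml{I}_g(V)$ on this reduced space is very rigid.

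First, I would invoke Theorem~\ref{t:theo-zoll} to lift any $\nu\in\ml{N}(\infty)$ to a nonnegative Radon measure $\mu$ on $S^*\IS^2$ such that $\nu=\pi_*\mu$, where $\pi:S^*\IS^2\to\IS^2$ is the canonical projection, and $\mu$ is invariant under both the geodesic flow $\varphi^s$ and the Hamiltonian flow of $\ml{I}_g(V)$ (the latter being well defined since $\ml{I}_g(V)$ Poisson-commutes with the generator of $\varphi^s$). The geodesic-flow invariance lets me descend $\mu$ to a measure $\bar\mu$ on the quotient space of oriented geodesics $\ml{G}:=S^*\IS^2/\varphi^s$, which for the canonical metric is diffeomorphic to $\IS^2$ and carries a symplectic form inherited from reduction. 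Under this identification, $\ml{I}_g(V)$ descends to a smooth function $\bar V$ on $\ml{G}$, and $\bar\mu$ is invariant under the Hamiltonian flow of $\bar V$.

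Next, I would decompose $\bar\mu$ according to the critical set $\operatorname{Crit}(\bar V)\subset\ml{G}$. Write $\bar\mu=\bar\mu_{\mathrm{reg}}+\bar\mu_{\mathrm{sing}}$, where $\bar\mu_{\mathrm{sing}}$ is the restriction of $\bar\mu$ to $\operatorname{Crit}(\bar V)$ and $\bar\mu_{\mathrm{reg}}$ is the restriction to the complement. Since $\ml{G}$ is two-dimensional, the regular level sets of $\bar V$ are smooth curves (unions of circles). On such a regular level set, $\bar V$-Hamiltonian invariance forces the conditional measure to be proportional to the natural $1$-dimensional Liouville measure along the orbit. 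Consequently $\bar\mu_{\mathrm{reg}}$ disintegrates as
\begin{equation*}
\bar\mu_{\mathrm{reg}}(d\gamma)=\rho(\bar V(\gamma))\,\sigma(d\gamma)
\end{equation*}
where $\sigma$ is the symplectic (Liouville) area on $\ml{G}$ and $\rho$ is a nonnegative density in $L^1$ with respect to the pushforward of $\sigma$ by $\bar V$; in particular $\bar\mu_{\mathrm{reg}}$ is absolutely continuous with respect to $\sigma$. Lifting back to $S^*\IS^2$, the corresponding piece of $\mu$ is absolutely continuous with respect to the Liouville measure on $S^*\IS^2$, so its projection onto $\IS^2$ is absolutely continuous with respect to $\operatorname{vol}_g$. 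This produces the density $f\in L^1(\IS^2)$. On the other hand, $\bar\mu_{\mathrm{sing}}$ is supported on the critical set of $\bar V$, which is exactly the image in $\ml{G}$ of $\ml{C}(V)$; hence $\pi_*$ of its lift is supported on $\ml{C}(V)$, giving the singular part $\nu_{\mathrm{sing}}$.

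Finally, for the second statement, when $\ml{C}(V)$ is a finite union of geodesics $\gamma_1,\dots,\gamma_n$, the critical set of $\bar V$ is a finite set of points in $\ml{G}$, so $\bar\mu_{\mathrm{sing}}=\sum_{j=1}^n c_j\delta_{[\gamma_j]}$ with $c_j\geq0$. Lifting back and projecting through $\pi$, each $\delta_{[\gamma_j]}$ yields exactly $\delta_{\gamma_j}$, which gives the claimed finite sum decomposition. The main obstacle is the absolute continuity argument in the intermediate step: showing that invariance of $\bar\mu_{\mathrm{reg}}$ under a Hamiltonian flow on the two-dimensional symplectic manifold $\ml{G}$ really forces absolute continuity with respect to $\sigma$ (and not just along level curves). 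This requires working carefully near the critical values of $\bar V$, where one must check that no extra singular mass accumulates on regular level sets as one approaches $\operatorname{Crit}(\bar V)$; the dimension count (the two-dimensionality of $\ml{G}$ and the fact that the Radon transform of a generic $V$ has isolated critical points) is what rescues the argument and is specific to the two-dimensional situation, which is why Theorem~\ref{t:noncon2d} is sharper than Theorem~\ref{t:noncon}.
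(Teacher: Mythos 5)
Your overall strategy (lift to a bi-invariant measure, pass to the space of geodesics $\ml{G}\cong\IS^2$) is compatible with the paper, although the lift and its bi-invariance come from Remark~\ref{r:eigen} together with Proposition~\ref{p:CROSS}(iii) (applied with $\eps_\hbar=\hbar$ and $\tau_\hbar\eps_\hbar^2\to\infty$), not from Theorem~\ref{t:theo-zoll}, which is only the non-concentration statement. The genuine gap is the step you yourself flag and then wave away: the claim that $\varphi_{\bar V}$-invariance forces $\bar\mu_{\mathrm{reg}}(d\gamma)=\rho(\bar V(\gamma))\,\sigma(d\gamma)$, i.e.\ absolute continuity of the regular part with respect to the reduced symplectic area. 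This is false. The time-normalized measure carried by a single regular closed orbit of the Hamiltonian vector field of $\bar V$ in $\ml{G}$ is invariant and singular with respect to $\sigma$; upstairs it corresponds to the Haar measure of a single invariant Lagrangian torus $\Lambda_{(E,J)}^{\rho}$, which is a perfectly admissible bi-invariant measure. This is exactly the content of the paper's Proposition~\ref{p:biinv}: a bi-invariant measure disintegrates into Haar measures on the invariant tori over an arbitrary, possibly atomic, transverse measure $\overline{\mu}$ on the image of the moment map --- not over Lebesgue measure in the level value. Consequently your chain ``$\bar\mu_{\mathrm{reg}}\ll\sigma$, hence the lift is $\ll$ Liouville, hence its projection is $\ll\operatorname{vol}_g$'' collapses at the first link, and the density $f\in L^1(\IS^2)$ is not produced. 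The dimension count near $\operatorname{Crit}(\bar V)$ does not repair this: the problematic concentration happens on regular level curves, far from the critical set.

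What actually rescues the theorem, and what is missing from your proposal, is that the projection to $\IS^2$ of the Haar measure of each individual invariant Lagrangian torus is absolutely continuous even though that measure is singular in phase space. This is the paper's Lemma~\ref{l:caus} and Theorem~\ref{t:abscon}: the restriction of $\pi$ to a torus $\Lambda_{(E,J)}^{\rho_0}$ is a local diffeomorphism away from a caustic consisting of finitely many closed curves transverse to the geodesic flow; this caustic has zero Haar measure, so the pushforward of the Haar measure assigns zero mass to every Lebesgue-null subset of $\IS^2$. Establishing that the caustic is this small is the real two-dimensional input, proved via Jacobi fields and the twist property of the vertical subbundle, and it is this argument (not regularity of the transverse measure) that yields $f\in L^1$. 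Your treatment of the singular part is fine: the restriction of the measure to $\operatorname{Crit}(\ml{I}_g(V))$ projects onto $\ml{C}(V)$, and when $\ml{C}(V)$ is a finite union of geodesics this gives $\sum_j c_j\delta_{\gamma_j}$, in agreement with Corollary~\ref{c:struct}.
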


\begin{rema}
Note that the condition on  $\ml{C}(V)$ in Theorems
\ref{t:noncon} and \ref{t:noncon2d} is non-empty as soon as the Radon transforms
$\ml{I}_g(V)$ is not constant, which is the case
generically if, for instance, $(M,g)$ is the $2$-sphere endowed with its canonical metric -- see e.g. appendix~$A$ of~\cite{Gu76}.
\end{rema}

Our third result on eigenfunctions deals with Zoll surfaces that are not isometric to $(\IS^2,\operatorname{can})$. Recall from chapter~$4$ in~\cite{Be78} that the $C_{2\pi}$-metrics of revolution on $\IS^2$ are precisely those that can be written in spherical coordinates as:
$$g_{\sigma}=(1+\sigma(\cos\theta))^2d\theta^2+\sin^2\theta d\phi^2,$$
where $\sigma$ is a smooth odd function on $[-1,1]$ satisfying $\sigma(1)=0$; such surfaces are also called Tannery surfaces.
Combining our methods to some earlier results by Zelditch~\cite{Ze96, Ze97}, one can prove the following result for $C_{2\pi}$-surfaces of revolution on $\IS^2$:
\begin{theo}\label{t:revolution} Let $g_{\sigma}$ be a $C_{2\pi}$-metric on $\IS^2$ such that $\sigma'(0)\neq 0$. Then there exist infinitely many geodesics $\gamma$ of $(\IS^2,g_{\sigma})$ such that $\nu(\gamma)=0$ for every $\nu$ in $\ml{N}(\infty)$. In particular, $\delta_\gamma\not\in \mathcal{N}(\infty)$, and
$$\mathcal{N}(\infty)\neq\ml{N}_{g_{\sigma}}.$$
\end{theo}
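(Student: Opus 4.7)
The plan is to reduce Theorem \ref{t:revolution} to Theorem \ref{t:theo-zoll} (and hence to the type of argument giving Theorem \ref{t:noncon}) via a quantum Birkhoff normal form, using the effective sub-principal operator on clusters identified by Zelditch in \cite{Ze96, Ze97}.

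First I would recall that on a Zoll surface the spectrum of $\sqrt{-\Delta_{g_\sigma}}$ clusters into packets of width $O(\lambda^{-1})$ around an arithmetic progression $\lambda_k = k + \alpha/4$. Inside each cluster the fine spectral structure is governed by an invariant pseudodifferential operator, the Szeg\H{o}-type ``cluster operator'' $Q_\sigma$, whose principal symbol is a smooth function $q_\sigma$ on $S^*\IS^2$ that is constant along orbits of the geodesic flow, hence descends to a function on the space of oriented geodesics. Zelditch's construction gives an explicit formula for $q_\sigma$ in terms of $\sigma$ on a $C_{2\pi}$-surface of revolution, and the hypothesis $\sigma'(0)\neq 0$ ensures that $q_\sigma$ is not constant on $S^*\IS^2$: the equator is a strict extremum, and $q_\sigma$ varies with the latitude of parallel circles. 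Thus, viewed as a Radon-type invariant, $q_\sigma$ plays exactly the role that $\ml{I}_g(V)$ plays in Theorems~\ref{t:noncon}--\ref{t:noncon2d}.

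Next I would run the argument of Theorem~\ref{t:theo-zoll} with this $q_\sigma$ in place of $\ml{I}_g(V)$. Given a sequence of $L^2$-normalized eigenfunctions $(u_n)$ with $-\frac{1}{2}\Delta_{g_\sigma}u_n=\lambda_n^2 u_n$, one groups them in the $\lambda_n^{-1}$-clusters of $\sqrt{-\Delta_{g_\sigma}}$ and decomposes each $u_n$ in a basis of eigenfunctions of the cluster operator $Q_\sigma$. The relevant semiclassical measure $\nu$ is then invariant under the geodesic flow (this is the standard Zoll averaging), and, thanks to the second microlocal analysis of Theorem~\ref{t:theo-zoll}, its singular part can only be carried by geodesics $\gamma$ whose corresponding point in the space of geodesics is a critical point of $q_\sigma$. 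Since $q_\sigma$ is a non-constant smooth function on the $2$-dimensional symplectic quotient $S^*\IS^2/S^1$, its critical set is a proper subset of that quotient, so infinitely many geodesics $\gamma$ of $(\IS^2,g_\sigma)$ are non-critical, and for any such $\gamma$ the measure $\nu$ must put zero mass on $\gamma$. This forces $\delta_\gamma\not\in\ml{N}(\infty)$ and $\ml{N}(\infty)\neq\ml{N}_{g_\sigma}$.

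The main obstacle I anticipate is verifying the two inputs needed from Zelditch's work: \emph{(i)} that the quantum normal form on $(\IS^2,g_\sigma)$ is sharp enough to conjugate the dynamics to one on $(\IS^2,\operatorname{can})$ modulo negligible errors, so that the second-microlocal framework producing Theorem~\ref{t:theo-zoll} applies verbatim to $g_\sigma$; and \emph{(ii)} that the symbol $q_\sigma$ is genuinely non-constant exactly under the assumption $\sigma'(0)\neq 0$. Point~(ii) is essentially a direct computation using Zelditch's formula for the cluster symbol on a surface of revolution (expansion around the equator), while point~(i) requires showing that conjugation by the normal-form unitary preserves the class of semiclassical measures of eigenfunction sequences -- a standard but delicate Egorov-type argument on Zoll manifolds.
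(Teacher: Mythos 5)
Your proposal follows essentially the same route as the paper: your cluster symbol $q_\sigma$ is exactly the function $q_0$ of Propositions~\ref{p:invariance}--\ref{p:invariancesmall}, the non-concentration on geodesics outside $\operatorname{Crit}(q_0)$ is precisely the argument of Paragraph~\ref{ss:proof-theo-zoll} applied with $L=q_0$, and the non-constancy of $q_0$ under $\sigma'(0)\neq 0$ is verified in Paragraph~\ref{ss:revolution} from Zelditch's formula by comparing the value on the equator with the value $\tfrac14$ on the meridian geodesics. Your anticipated obstacle (i) is in fact unnecessary: no conjugation to $(\IS^2,\operatorname{can})$ is used, since the averaging and propagation results are established directly on any $P_l$-manifold.
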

The proof of this result will be given in paragraph~\ref{ss:revolution}. Theorems \ref{t:noncon} and \ref{t:noncon2d} will also be proved in Section \ref{s:averaging}; they are obtained as a consequence of our analysis of the time-dependent Schrödinger equation.

\subsection{Long-time dynamics for the semiclassical Schrödinger equation} Our analysis of the time-dependent Schrödinger evolution fits in a natural way in the semiclassical framework. There will be three scales involved: the characteristic scale of oscillation of the solution $\hbar$, the size of the perturbation $\eps^2$ and the time-scale $\tau$. Our goal is to describe how these three scales affect the dynamics of the Schrödinger equation.  

We shall assume that $\hbar\in(0,1]$, and define the semiclassical the Schr\"odinger operator:
\begin{equation}\label{e:perturbed-operator}
P_{\eps}(\hbar):=-\frac{\hbar^2\Delta_g}{2}+\eps_{\hbar}^2V,
\end{equation}
where, as before, $V$ belongs to $\ml{C}^{\infty}(M,\IR)$. 
We shall be interested in the regime in which the strength $\eps:=(\eps_{\hbar})_{0<\hbar\leq 1}$ of the perturbation satisfies
$$\eps_{\hbar}\rightarrow 0\ \text{as}\ \hbar\rightarrow 0^+.$$
The corresponding semiclassical Schr\"odinger equation is:
\begin{equation}\label{e:perturbed-schr}
i\hbar\partial_t v_{\hbar}=P_{\eps}(\hbar)v_{\hbar},\quad v_{\hbar}|_{t=0}=u_{\hbar}\in L^2(M).
\end{equation}
We assume that the sequence of initial data $(u_{\hbar})_{\hbar>0}$ is normalized in $L^2(M)$ and satisfies the following oscillation properties:
\begin{equation}\label{e:hosc}
\limsup_{\hbar\rightarrow 0}\left\Vert \mathbf{1}_{\left[  R,\infty\right)
}\left(  -\hbar^{2}\Delta_g\right)  u_{\hbar}\right\Vert _{L^{2}\left(  M\right)
}\longrightarrow0,\quad\text{as }R\longrightarrow\infty,
\end{equation}
and%
\begin{equation}\label{e:shosc}
\limsup_{\hbar\rightarrow 0}\left\Vert \mathbf{1}_{\left[  0,\delta\right]
}\left(  -\hbar^{2}\Delta_g\right)  u_{\hbar}\right\Vert _{L^{2}\left(  M\right)
}\longrightarrow0,\quad\text{as }\delta\longrightarrow0^{+}.
\end{equation}

\begin{rema}
Note that any normalized sequence $(u_n)_{n\in\mathbb{N}}$ safisfies (\ref{e:hosc}) for some $\left(
\hbar_{n}\right)  $; this is also the case with (\ref{e:shosc}), provided $\left(
u_{n}\right)  $ weakly converges to zero (the sequence $\left(  \hbar_{n}\right)
$ here may be different from the one in (\ref{e:hosc})). Yet, in general, it is
possible to construct normalized sequences $\left(  u_{n}\right)  $ such that
(\ref{e:hosc}), (\ref{e:shosc}) never hold simultaneously (see \cite{Ge98}). Note also that any sequence of normalized
eigenfunctions of $-\frac{1}{2}\Delta_g+V$ satisfies (\ref{e:hosc}), (\ref{e:shosc}) with $\hbar_{n}=\lambda_{n}^{-1}$ where $\lambda_n^2$ is the corresponding eigenvalue.
\end{rema}
We are interested in understanding the dynamics of the sequence of solutions $(v_\hbar)$ at time scales $\tau:=(\tau_{\hbar})_{0<\hbar\leq 1}$ where 
$$\lim_{\hbar\rightarrow 0^+}\tau_{\hbar}=+\infty.$$
More precisely, the main object in our study is the sequence of time-scaled position densities:
$$\nu_{\hbar}:(t,x)\longmapsto|v_\hbar(\tau_\hbar t,x)|^2=\left| e^{-i\frac{\tau_{\hbar}}{\hbar}t P_{\eps}(\hbar)}u_{\hbar}\right|^2(x).$$
The sequence $(\nu_\hbar)$ is bounded in $\ml{D}'(\IR\times M)$ and satisfies:
$$\left|\langle \nu_\hbar, b\rangle_{\ml{D}'\times \ml{C}^\infty}\right| \leq \int_\IR || b(t,\cdot) ||_{L^\infty(M)}dt,\ \text{ for every } b\in \ml{C}^\infty_c(\IR\times M).$$
Therefore, every accumulation point of the sequence $(\nu_\hbar)$ is an element in $L^{\infty}(\IR,\ml{P}(M))$, where $\ml{P}(M)$ denotes the set of probability measures on $M$. We shall denote by $\ml{N}(\tau,\eps)$ the set of such accumulation points obtained as the sequence of initial data $(u_{\hbar})$ varies among normalized sequences satisfying~\eqref{e:hosc} and~\eqref{e:shosc}.

Our goal is to understand the structure of the elements in $\ml{N}(\tau,\eps)$, in particular how this set depends on $\tau$ and $\eps$ as well as on the geometry of the manifold and the form of the potential. Again any element $\nu(t)$ in $\ml{N}(\tau,\eps)$ satisfies $\nu(t)\in\ml{N}_g$ for a.e. $t$ in $\IR$.
\begin{rema}
When $\eps_{\hbar}=\hbar$ and $\tau_\hbar = \hbar^{-1}$, one has 
$$e^{-i\frac{\tau_{\hbar}}{\hbar}t P_{\eps}(\hbar)} = e^{it(\frac{1}{2}\Delta_g-V)}.$$
Therefore this particular scaling corresponds to studying solutions to the non-semiclassical Schrödinger equation \eqref{e:nonsemiclassical-schr}.
\end{rema}
\begin{rema}\label{r:eigen}
If $u$ is an eigenfunction of the operator \eqref{e:eigenf} then, by the previous remark,
$$|e^{-i\frac{\tau_{\hbar}}{\hbar}t P_{\hbar}(\hbar)}u|^2=|e^{i\hbar\tau_\hbar t (\frac{1}{2}\Delta_g-V)}u|^2=|u|^2.$$
This implies that $\ml{N}(\infty) \subset \ml{N}(\tau,\hbar)$ for any time scale $\tau$.
\end{rema}

\begin{rema}
Most of our analysis can be extended to perturbations of the form $\eps_{\hbar}^2Q_{\hbar}$, where $Q_{\hbar}$ is a selfadjoint operator in $\Psi^{0,0}(M)$.
Taking $Q_{\hbar}=V$ makes the exposition slightly simpler as we can use homogeneity properties of $V$.
\end{rema}

The set $\ml{N}(\tau,\eps)$ was characterized in \cite{Ma09} in the case of short times ($\tau_{\hbar}\ll\hbar^{-2}$) and small perturbations ($\eps_\hbar \leq \hbar$) in the same geometric setting as in this article. It turns out that in that case, the elements in $\ml{N}(\tau,\eps)$ do not depend in $t$ and one has
$$\ml{N}(\tau,\eps) = \ml{N}_g,$$
that is, $\ml{N}(\tau,\eps)$ gets in some sense as big as it can be.

In the context of negatively curved surfaces, some equidistribution properties of the elements of $\ml{N}(\tau,\eps)$ were obtained for strong enough perturbations~\cite{EsRi14, Ri14}.  

This problem has also been studied in great detail for the case $M=\IT^d$ endowed with its natural metric. The articles~\cite{AFM12, AM10} describe this set in the case of small perturbations ($\eps_{\hbar}\leq\hbar$), whereas the case of stronger perturbations will be studied in more detail in~\cite{MaRi14}. In particular, in~\cite{AFM12, AM10} it is shown that when $(M,g)=(\IT^d,\operatorname{can})$ and $\eps_{\hbar}=\hbar$, the time scale $\tau_\hbar = \hbar^{-1}$ is critical for this problem. When $\tau_\hbar\ll\ \hbar^{-1}$ the set $\ml{N}(\tau,\eps)$ contains measures that are singular with respect to Lebesgue measure (in particular, it contains all $\delta_\gamma$ corresponding to closed geodesics); whereas if $\tau_\hbar \geq \hbar^{-1}$ then $\ml{N}(\tau,\eps)$ consists only of measures that are absolutely continuous with respect to the Lebesgue measure. Moreover, when $\tau_\hbar = \hbar^{-1}$ a precise description of the dependence on $t$ of those mesures is given.

In this article we show that a similar phenomenon takes place for Zoll manifolds. It turns out that the critical time scale for this problem is $\tau_\hbar = \eps_\hbar^{-2}$ in the case of strong perturbation ($\eps_{\hbar}\geq\hbar$) or $\tau_\hbar = \hbar^{-2}$ for small perturbations ($\eps_{\hbar}\ll\ \hbar$) :
\begin{itemize}
\item Below the critical time-scale, one has $\ml{N}(\tau,\eps) = \ml{N}_g,$ and we provide a formula to compute $\nu$ from the sequence of initial data.

\item At the critical time-scale, measures in  $\ml{N}(\tau,\eps)$ may depend in a non trivial way on $t$. We give an explicit propagation law that involves the Radon transform of the potential and the sequence of initial data.

\item Above the critical time-scale, the restriction of the measures in  $\ml{N}(\tau,\eps)$ to the complement of the critical set $\ml{C}(V)$ has some additional regularity. When $(M,g)=(\IS^2,\operatorname{can})$, this restriction is in fact absolutely continuous with respect to the Riemannian measure.
\end{itemize}

The precise statement of our results is given in Propositions~\ref{p:CROSS},~\ref{p:invariance} and~\ref{p:invariancesmall}; they are formulated in terms of semiclassical measures and are presented in Section \ref{ss:prop}. In Section \ref{ss:echo} we apply these results to the study of the quantum Loschmidt echo.

\subsection{Some results on the structure of $\ml{N}(\tau,\eps)$}
Let us now present some direct consequences of our results.
\begin{theo}\label{t:theo-zoll} 
Suppose $(M,g)$ is a $P_l$-manifold and that
$$\tau_{\hbar}\eps_{\hbar}^2\rightarrow +\infty.$$
Suppose one the following condition holds:
\begin{enumerate}
 \item $\eps_{\hbar}\hbar^{-1}\rightarrow +\infty$;
 \item $(M,g)$ is a compact rank one symmetric space.
\end{enumerate}
Then, for every $\nu$ in ${\ml{N}}(\tau,\eps)$ and for every geodesic $\gamma_0$ that is not contained in
$\ml{C}(V)$, one has:
$$\nu(t)(\gamma_0)=0,\ \text{for a.e.}\ t\in\IR.$$
\end{theo}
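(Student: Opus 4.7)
The plan is to lift each $\nu\in\ml{N}(\tau,\eps)$ to a semiclassical (Wigner) measure $\tilde{\mu}(t)$ on $T^*M$: a probability measure for a.e.\ $t$, concentrated on the unit cotangent bundle $S^*M$ by the oscillation hypotheses \eqref{e:hosc}--\eqref{e:shosc}, and satisfying $\nu(t)=\pi_*\tilde{\mu}(t)$. The strategy then rests on two commuting flow-invariances of $\tilde{\mu}(t)$: the standard Egorov invariance under the geodesic flow $\varphi^\tau$, and -- crucially -- invariance under the Hamiltonian flow $\phi^s$ of the Radon transform $\ml{I}_g(V)$. The latter is precisely the content of Propositions~\ref{p:CROSS} and~\ref{p:invariance}: under $\tau_\hbar\eps_\hbar^2\to\infty$ together with either of the two hypotheses of the theorem, $\tilde{\mu}(t)$ is $\phi^s$-invariant for a.e.\ $t$. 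From that point on the argument is purely geometric.

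A first reduction lowers the question to $S^*M$. Let $\tilde{\gamma}_0=\tilde{\gamma}_0^+\cup\tilde{\gamma}_0^-\subset S^*M$ denote the two unit cotangent lifts of $\gamma_0$. If $\rho\in\pi^{-1}(\gamma_0)\cap S^*M$ lies outside $\tilde{\gamma}_0$, the closed geodesic issued from $\rho$ meets $\gamma_0$ only transversally, so its $\varphi^\tau$-orbit meets $\pi^{-1}(\gamma_0)$ at a finite (negligible) set of times per period. Decomposing $\tilde{\mu}(t)$ along $\varphi^\tau$-orbits (all closed on the Zoll manifold $M$) then gives
\[
\nu(t)(\gamma_0)\;=\;\tilde{\mu}(t)\bigl(\pi^{-1}(\gamma_0)\cap S^*M\bigr)\;=\;\tilde{\mu}(t)(\tilde{\gamma}_0),
\]
so it suffices to prove $\tilde{\mu}(t)(\tilde{\gamma}_0)=0$.

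The decisive step is a transversality observation. Because $\gamma_0\not\subset\ml{C}(V)$, one may choose $x_0\in\gamma_0$ with $d_{(x_0,\xi)}\ml{I}_g(V)\neq 0$ for every $\xi\in T^*_{x_0}M\setminus\{0\}$; by the $\varphi^\tau$-invariance of $\ml{I}_g(V)$ this non-degeneracy propagates along all of $\tilde{\gamma}_0$. Using the zero-homogeneity of $\ml{I}_g(V)$ and Euler's identity $\xi\cdot\partial_\xi\ml{I}_g(V)=0$ (versus $\xi\cdot\partial_\xi(\|\xi\|_x^2/2)=\|\xi\|_x^2\neq 0$ on $S^*M$), one sees that $d\ml{I}_g(V)$ cannot be proportional to $d(\|\xi\|_x^2/2)$ along $\tilde{\gamma}_0$; equivalently, $H_{\ml{I}_g(V)}$ is nowhere tangent to $\tilde{\gamma}_0$ as a vector field on $S^*M$. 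Since $\{\ml{I}_g(V),\|\xi\|_x^2/2\}=0$, the flows $\phi^s$ and $\varphi^\tau$ commute, so each $\phi^s(\tilde{\gamma}_0)$ is a union of $\varphi^\tau$-orbits. Combined with transversality this forces $\phi^s(\tilde{\gamma}_0)\neq\tilde{\gamma}_0$ for every small $s\neq 0$; applying the same argument to differences $s_2-s_1$ shows that the family $\{\phi^s(\tilde{\gamma}_0)\}_{s\in(0,s_0)}$ is an uncountable collection of pairwise distinct $\varphi^\tau$-orbits -- hence pairwise disjoint subsets of $S^*M$.

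The conclusion is then a counting argument: by $\phi^s$-invariance of $\tilde{\mu}(t)$, every $\phi^s(\tilde{\gamma}_0)$ carries the same mass $\tilde{\mu}(t)(\tilde{\gamma}_0)$; a finite measure cannot assign a common positive value to uncountably many pairwise disjoint sets, whence $\tilde{\mu}(t)(\tilde{\gamma}_0)=0$ and therefore $\nu(t)(\gamma_0)=0$ for a.e.\ $t$. The main obstacle in this plan is not the short geometric/measure-theoretic chain above but rather the $\phi^s$-invariance of $\tilde{\mu}(t)$ at the critical time scale: establishing it is the heart of Propositions~\ref{p:CROSS} and~\ref{p:invariance} and will require a semiclassical averaging (Weinstein--Guillemin--Colin de Verdi\`ere normal form) for $P_\eps(\hbar)$, with distinct treatments in the regime $\eps_\hbar\gg\hbar$ and in the CROSS case, where the clustering of the Laplace spectrum provides the missing algebraic input.
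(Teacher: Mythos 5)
Your overall route is essentially the paper's: deduce from Propositions~\ref{p:CROSS} and~\ref{p:invariance} the extra invariance of the time-dependent semiclassical measures under the Hamiltonian flow of $\ml{I}_g(V)$, observe that along any lift of a geodesic not contained in $\ml{C}(V)$ this Hamiltonian vector field is nowhere tangent to the geodesic orbit, and conclude because a finite measure cannot assign the same positive mass to uncountably many pairwise disjoint translates of that orbit, before projecting to $M$ and disposing of the transversal directions in $\pi^{-1}(\gamma_0)$ by geodesic-flow invariance. Your Euler-identity argument for non-tangency (zero-homogeneity of $\ml{I}_g(V)$ versus $dp_0$) is a correct variant of the paper's Lemma~\ref{l:inv}, which is proved there via orthogonality with respect to the Sasaki metric.

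The one concrete inaccuracy is your claim that the oscillation hypotheses \eqref{e:hosc}--\eqref{e:shosc} force the lifted measure $\mu(t)$ to be carried by $S^*M$. They only guarantee that $\mu(t)$ is a probability measure on $\mathring{T}^*M$, i.e.\ no mass on the zero section or at infinity in $\xi$; nothing pins the measure to the unit cotangent bundle, so your ``first reduction'' to $S^*M$ is not available as stated. This does not damage the skeleton of the argument: since $\ml{I}_g(V)$ is $0$-homogeneous, its flow commutes with the geodesic flow and preserves every sphere bundle $\{\|\xi\|_x=\lambda\}$, so the same transversality and disjointness/counting argument applies to each rescaled lift $\Gamma_0(\lambda):=\{(x,\lambda\xi):(x,\xi)\in\Gamma_0\}$, and, taking unions over compact ranges $\lambda\in[c_1,c_2]$, one gets $\mu(t)\bigl(\bigcup_{\lambda\in[c_1,c_2]}\Gamma_0(\lambda)\bigr)=0$ --- this is exactly how the paper handles it. Likewise, your treatment of the points of $\pi^{-1}(\gamma_0)$ whose direction is transversal to $\gamma_0$ must be carried out on all energy shells rather than only on $S^*M$, but the same argument (invariance under the geodesic flow plus finiteness of the measure) goes through there as well.
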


We note that for general $P_l$ manifolds we are still able to obtain a similar result even if the condition $\eps_{\hbar}\gg\hbar$ does not hold -- see section~\ref{s:main} for more details. It is however more complicated to state as it involves the structure of a remainder term appearing in the natural decomposition of $\sqrt{-\Delta_g}$ on Zoll manifolds~\cite{CdV79,DuGu75}. In the case of a general $P_l$-metric on $M=\mathbb{S}^2$, an explicit expression of this term involving curvature terms and Jacobi fields was obtained by Zelditch in~\cite{Ze96, Ze97} (see Paragraph~\ref{ss:revolution} for more details on this issue).

This Theorem is a direct consequence of Propositions~\ref{p:CROSS} and~\ref{p:invariance} and it will be proved in Paragraph~\ref{ss:proof-theo-zoll}. It tells us in particular that, for large enough times, solutions of the Schr\"odinger equation cannot be concentrated on closed geodesics corresponding to regular points of $\ml{I}_g(V)$. For instance, if one considers a sequence $(u_{\hbar})_{0<\hbar\leq 1}$ of coherent states that is microlocalized at a certain point $\rho\in\mathring{T}^*M$, then the corresponding solution will not be concentrated along the corresponding closed geodesic (for large enough times) provided that the geodesic consists of regular points. 

Theorem \ref{t:theo-zoll} admits the following reformulation in terms of quasimodes for the Schr\"odinger operator $P_{\eps}(\hbar)$:
\begin{coro}\label{c:zoll-coro}
Suppose the hypotheses of Theorem \ref{t:theo-zoll} hold.
Let $(\psi_{\hbar})_{0<\hbar\leq 1}$ be a normalized sequence in $L^2(M)$ satisfying
$$\left(-\frac{\hbar^2\Delta_g}{2}+\eps_{\hbar}^2 V\right)\psi_{\hbar}=E(\hbar)\psi_{\hbar}+o(\hbar\eps_{\hbar}^2),$$
with $E(\hbar)\rightarrow E\neq 0$ as $\hbar\rightarrow 0.$ 
Then, for every weak-$\star$ accumulation point $\nu_0$ of the sequence
$$\nu_{\hbar}:=|\psi_{\hbar}|^2 \operatorname{vol}_g,$$
and for every geodesic $\gamma_0$ that is not contained in $\ml{C}(V)$, one has
$$\nu_0(\gamma_0)=0.$$ 
\end{coro}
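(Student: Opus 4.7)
The plan is to reduce Corollary \ref{c:zoll-coro} to an application of Theorem \ref{t:theo-zoll}, by regarding $\psi_\hbar$ as initial data for the semiclassical Schr\"odinger flow and showing that, at a suitably chosen time scale $\tau_\hbar$, the evolution remains essentially stationary modulo a global phase.

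First I check that $(\psi_\hbar)$ satisfies the oscillation conditions \eqref{e:hosc} and \eqref{e:shosc}. The hypothesis $(P_{\eps}(\hbar) - E(\hbar))\psi_\hbar = o_{L^2}(\hbar\eps_\hbar^2)$ combined with the fact that $\eps_\hbar^2 V$ is uniformly bounded gives $\|(-\hbar^2\Delta_g/2 - E(\hbar))\psi_\hbar\|_{L^2} = O(\eps_\hbar^2)$. Condition \eqref{e:hosc} then follows from the resulting uniform bound on $\langle -\hbar^2\Delta_g\psi_\hbar,\psi_\hbar\rangle$. For \eqref{e:shosc} I use that, for any $\delta<E$ and $\hbar$ small enough, the operator $-\hbar^2\Delta_g/2 - E(\hbar)$ is bounded below in absolute value by $E/4$ on the range of $\mathbf{1}_{[0,\delta]}(-\hbar^2\Delta_g)$, so that the functional calculus yields $\|\mathbf{1}_{[0,\delta]}(-\hbar^2\Delta_g)\psi_\hbar\|_{L^2} = O(\eps_\hbar^2)$.

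Set $\alpha_\hbar := (\hbar\eps_\hbar^2)^{-1}\|(P_{\eps}(\hbar) - E(\hbar))\psi_\hbar\|_{L^2}$, so that $\alpha_\hbar\to 0$, and pick $\tau_\hbar := \alpha_\hbar^{-1/2}\eps_\hbar^{-2}$. Then $\tau_\hbar\eps_\hbar^2 = \alpha_\hbar^{-1/2}\to+\infty$, which places us in the regime of Theorem \ref{t:theo-zoll}. A Duhamel computation yields
\[
\bigl\|e^{-it\tau_\hbar P_{\eps}(\hbar)/\hbar}\psi_\hbar - e^{-it\tau_\hbar E(\hbar)/\hbar}\psi_\hbar\bigr\|_{L^2} \leq \frac{|t|\tau_\hbar}{\hbar}\|(P_{\eps}(\hbar)-E(\hbar))\psi_\hbar\|_{L^2} = |t|\,\alpha_\hbar^{1/2}.
\]
Writing $v_\hbar$ for the solution of \eqref{e:perturbed-schr} with initial datum $\psi_\hbar$, expanding $|v_\hbar(\tau_\hbar t,\cdot)|^2$ accordingly, and using $\|\psi_\hbar\|_{L^2}=1$, one obtains for every $a\in\ml{C}(M)$ that
\[
\int_M a(x)|v_\hbar(\tau_\hbar t,x)|^2\operatorname{vol}_g(x) - \int_M a(x)|\psi_\hbar(x)|^2\operatorname{vol}_g(x) = O_a\!\bigl(|t|\,\alpha_\hbar^{1/2}+|t|^2\alpha_\hbar\bigr),
\]
uniformly for $t$ in compact intervals of $\IR$; the scalar phase $e^{-it\tau_\hbar E(\hbar)/\hbar}$ disappears when taking the modulus.

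To conclude, I extract a subsequence along which $|\psi_\hbar|^2\operatorname{vol}_g$ converges weakly-$\ast$ to the given accumulation point $\nu_0$, and refine it once more so that $|v_\hbar(\tau_\hbar\cdot,\cdot)|^2\operatorname{vol}_g$ converges in the sense of the paper to some $\nu\in\ml{N}(\tau,\eps)$. Integrating the previous estimate against $\chi(t)a(x)$ with $\chi\in\ml{C}_c(\IR)$ and passing to the limit shows that $\nu(t)=\nu_0$ for almost every $t\in\IR$. Theorem \ref{t:theo-zoll} then gives, for every geodesic $\gamma_0$ not contained in $\ml{C}(V)$, the identity $\nu_0(\gamma_0) = \nu(t)(\gamma_0) = 0$. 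The main technical obstacle is precisely this choice of $\tau_\hbar$: one simultaneously needs $\tau_\hbar\eps_\hbar^2\to\infty$ to trigger the averaging of Theorem \ref{t:theo-zoll} and $\tau_\hbar$ times the quasimode error to vanish. The fact that the remainder in the hypothesis is $o(\hbar\eps_\hbar^2)$ rather than merely $O(\hbar\eps_\hbar^2)$ is exactly what allows such a $\tau_\hbar$ to exist.
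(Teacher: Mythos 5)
Your proof is correct and is essentially the argument the paper intends: the Corollary is presented there as a direct reformulation of Theorem~\ref{t:theo-zoll} (in the spirit of Remark~\ref{r:eigen}), and your Duhamel estimate together with the intermediate scale $\tau_\hbar=\alpha_\hbar^{-1/2}\eps_\hbar^{-2}$ is precisely what makes the quasimode nearly stationary while still having $\tau_\hbar\eps_\hbar^2\to+\infty$, the $o(\hbar\eps_\hbar^2)$ error being exactly what guarantees such a scale exists. The only cosmetic point is to guard against $\alpha_\hbar=0$ (exact eigenfunctions), e.g.\ by replacing $\alpha_\hbar$ with $\max(\alpha_\hbar,\hbar)$ in the definition of $\tau_\hbar$.
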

We would like to stress the fact that we are not requiring $\eps_{\hbar}\ll\sqrt{\hbar}$ for our results to hold. One often makes this assumption when studying the spectral properties of semiclassical operators with periodic bicharacteritics (see~\cite{OV10, HaHiSj14} for instance) in order to to keep the nice cluster structure of the the spectrum of the operator $-\Delta_g$, see~\cite{CdV79}. 

For the $2$-sphere endowed with its canonical metric $g=\text{can}$,
we are able to be more precise regarding the regularity of the elements
in $\ml{N}(\tau,\eps)$:

\begin{theo}\label{t:theo-S2} Let $V$ be a smooth function on $(\IS^2,\operatorname{can})$ such that $\ml{C}(V)$ consists in a finite number of geodesics
$\gamma_1,\ldots,\gamma_r$. Suppose
$$\tau_{\hbar}\eps_{\hbar}^2\rightarrow +\infty.$$
Then, any $\nu \in \ml{N}(\tau,\eps)$ is of the following form
$$\nu(t,\cdot)=f(t,\cdot) \operatorname{vol}_{\operatorname{can}}+\sum_{j=1}^r c_j(t)\delta_{\gamma_j},$$
where for a.e. $t$ in $\IR$, $f(t,\cdot) \in L^1(\IS^2)$ and $c_1,\ldots, c_r$ are non-negative functions in $L^\infty(\IR)$.
\end{theo}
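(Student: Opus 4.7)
\textbf{Proof plan for Theorem~\ref{t:theo-S2}.} The plan is to combine Theorem~\ref{t:theo-zoll} with the absolute-continuity statement for $(\IS^2,\operatorname{can})$ announced just before the theorem (``this restriction is in fact absolutely continuous with respect to the Riemannian measure''), together with the fact that $\ml{N}_g$ is the weak-$\star$ closed convex hull of $\{\delta_\gamma\}$. Fix $\nu\in\ml{N}(\tau,\eps)$. For a.e. $t$, $\nu(t)\in\ml{N}_g$; since the space of closed geodesics of $(\IS^2,\operatorname{can})$ is compact and the dual Radon transform $\mu\mapsto\int\delta_\gamma\,d\mu(\gamma)$ is continuous, convex and surjective onto $\ml{N}_g$, there exists a probability $\mu_t$ with $\nu(t)=\int\delta_\gamma\,d\mu_t(\gamma)$.

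The first step is to extract the atomic contributions coming from $\gamma_1,\dots,\gamma_r$. Since $\tau_\hbar\eps_\hbar^2\to+\infty$ and $(\IS^2,\operatorname{can})$ is a compact rank one symmetric space, hypothesis~$(2)$ of Theorem~\ref{t:theo-zoll} holds, so for a.e. $t$ and every geodesic $\gamma_0\not\subset\ml{C}(V)=\gamma_1\cup\cdots\cup\gamma_r$ one has $\nu(t)(\gamma_0)=0$. Set $c_j(t):=\nu(t)(\gamma_j)\in[0,1]$. Two distinct great circles meet in at most two points, so $\delta_\gamma(\gamma_0)=\mathbf{1}_{\gamma=\gamma_0}$; for any Borel $A\subset\gamma_j$ the integral representation yields
\begin{equation*}
\nu(t)(A)=\int\delta_\gamma(A)\,d\mu_t(\gamma)=\mu_t(\{\gamma_j\})\,\delta_{\gamma_j}(A)=c_j(t)\,\delta_{\gamma_j}(A),
\end{equation*}
so that $\nu(t)|_{\gamma_j}=c_j(t)\,\delta_{\gamma_j}$. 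Summing over $j$ (and using $\delta_{\gamma_i}(\gamma_k)=0$ for $i\neq k$) gives $\nu(t)|_{\bigcup_j\gamma_j}=\sum_{j=1}^{r}c_j(t)\,\delta_{\gamma_j}$.

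For the complementary part, write $\nu(t)=\nu(t)|_{\bigcup_j\gamma_j}+\nu(t)|_{\IS^2\setminus\bigcup_j\gamma_j}$. By the absolute-continuity statement invoked above (to be established via Propositions~\ref{p:CROSS}--\ref{p:invariancesmall}), the second piece is of the form $f(t,\cdot)\operatorname{vol}_{\operatorname{can}}$ for some nonnegative $f(t,\cdot)\in L^1(\IS^2)$. Combining yields the claimed decomposition. The bound $c_j(t)\in[0,1]$ follows from $\nu(t)$ being a probability measure; measurability of $t\mapsto c_j(t)=\nu(t)(\gamma_j)$ follows from the upper semicontinuity of $\nu\mapsto\nu(F)$ on closed sets $F$ for the weak-$\star$ topology, applied to a weak-$\star$ measurable representative of $\nu$, so $c_j\in L^\infty(\IR)$.

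The main obstacle is not in the bookkeeping above but in supplying the absolute-continuity ingredient: one must exclude any singular component of $\nu(t)$ off the critical geodesics of $\ml{I}_g(V)$. This step is genuinely specific to the canonical two-sphere and requires a second-microlocal analysis around each non-critical geodesic, exploiting the fact that the quotient of $T^*\IS^2\setminus 0$ by the Hopf (geodesic) flow is a smooth symplectic manifold on which the averaged symbol $\ml{I}_g(V)$ acts as a Hamiltonian with non-degenerate dynamics away from $\ml{C}(V)$; the non-degeneracy of this induced flow is what produces the dispersion that upgrades the absence of geodesic atoms into full absolute continuity. Granted this input, Theorem~\ref{t:theo-S2} reduces to the convex-combinatorial identification carried out above.
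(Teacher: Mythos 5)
There is a genuine gap: the only non-trivial content of Theorem~\ref{t:theo-S2} is precisely the absolute continuity of $\nu(t)$ away from $\ml{C}(V)$, and your proposal does not prove it. Theorem~\ref{t:theo-zoll} only kills atoms carried by individual non-critical closed geodesics; it says nothing about other singular measures (e.g.\ measures that are non-atomic on the space of geodesics but singular with respect to $\operatorname{vol}_{\operatorname{can}}$, such as $\int\delta_\gamma\,d\mu_t(\gamma)$ with $\mu_t$ a singular continuous measure on the space of great circles). You acknowledge this and defer the missing step to a ``second-microlocal analysis'' producing ``dispersion'' from a non-degeneracy of the induced flow of $\ml{I}_g(V)$ on the quotient of $\mathring{T}^*\IS^2$ by the geodesic flow; this is not carried out, and it is not how the result is obtained in the paper. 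In fact no second microlocalization, no dispersion estimate, and no non-degeneracy of the averaged Hamiltonian are used: all that is needed away from $\ml{C}(V)$ is $d\,\ml{I}_g(V)\neq 0$, which holds by definition of $\operatorname{Crit}$.

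The paper's mechanism operates on the phase-space measure $\mu(t)\in\ml{M}(\tau,\eps)$ rather than on its projection $\nu(t)$, and this is exactly the information your bookkeeping discards. Proposition~\ref{p:CROSS}~iii) (with $q_0\equiv 0$ since $\IS^2$ is a CROSS) shows that for a.e.\ $t$ the measure $\mu(t)$ is invariant by \emph{both} the geodesic flow and the Hamiltonian flow of $\ml{I}_g(V)$. On $\ml{R}(L)=\mathring{T}^*\IS^2\setminus\operatorname{Crit}(L)$ the pair $(p_0,\ml{I}_g(V))$ is a completely integrable system; disintegration along the moment map (Proposition~\ref{p:biinv}) shows any bi-invariant measure is a superposition of Haar measures on Liouville tori, and Lemma~\ref{l:caus} (a Jacobi-field/caustic argument) shows each such torus projects to $\IS^2$ by a local diffeomorphism off finitely many closed curves, of Haar measure zero. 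This yields Theorem~\ref{t:abscon} and Corollary~\ref{c:struct}: $\pi_*\mu(t)$ is absolutely continuous plus a part coming from $\operatorname{Crit}(L)$, which, $\operatorname{Crit}(L)$ consisting of the closed orbits over $\gamma_1,\dots,\gamma_r$, is $\sum_j c_j(t)\delta_{\gamma_j}$ by geodesic-flow invariance. Your convex-representation computation of $\nu(t)$ restricted to the $\gamma_j$'s and the measurability of the $c_j$'s are fine, but they are the easy part; as it stands, the proposal assumes the theorem's essential statement rather than proving it, and the route it sketches for that statement would have to be built from scratch, unlike the purely geometric bi-invariance argument above.
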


\begin{rema}
This Theorem is a direct consequence of
Proposition~\ref{p:invariance} and Corollary~\ref{c:struct}.
\end{rema}

\begin{rema} Recall that the space of geodesics on $\IS^2$ can be identified with~$\IS^2$~\cite{Be78} (remark $2.10$) and that $\ml{I}_{\text{can}}$ induces an isomorphism
from $\ml{C}^{\infty}_{\text{even}}(\IS^2)$ onto $\ml{C}^{\infty}_{\text{even}}(\IS^2)$~\cite{Gu76}. Moreover, $\ml{I}_{\text{can}}(V)=0$ for any odd function on $\IS^2$.
In particular, for a generic choice of $V$, the assumption of the Theorem is satisfied.
\end{rema}

\subsection{An application to spectral theory}

We mention the following Proposition on the spectral properties of $P_{\eps}(\hbar)$ which can also be obtained using the tools developed in the present article:

\begin{prop}\label{p:level-spacing}  Suppose $(M,g)$ is a $P_l$-manifold. Let $E_0>0$ and let $0<2\delta_0\leq E_0$. Let $(E_j(\hbar))_{j=1}^{N(\hbar)}$ be the distinct eigenvalues of $P_{\eps}(\hbar)$ in the interval $[E_0-\delta_0,E_0+\delta_0]$.

If $\ml{C}(V)\neq M$, then there exists some constant $c_0>0$ such that, for $\hbar>0$ small enough, one has
 $$s_0(\hbar):=\inf\{|E_j(\hbar)-E_k(\hbar)|:1\leq j\neq k \leq N(\hbar)\}\leq c_0\hbar\eps_{\hbar}^2,$$
 provided one of the following conditions holds:
 \begin{enumerate}
  \item $\eps_{\hbar}\hbar^{-1}\rightarrow +\infty$;
  \item $(M,g)$ is a compact rank-one symmetric space.
 \end{enumerate}
\end{prop}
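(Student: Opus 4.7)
The plan is to produce, inside the spectral window $[E_0-\delta_0,E_0+\delta_0]$, two distinct eigenvalues of $P_\eps(\hbar)$ separated by $O(\hbar\eps_\hbar^2)$. I will do this by reducing the spectral problem to a question about the compression of $V$ to a single spectral cluster of $-\hbar^2\Delta_g/2$, and then applying a Weyl-type count driven by the non-constancy of $\ml{I}_g(V)$. Throughout, I use the Duistermaat--Guillemin--Weinstein--Colin de Verdi\`ere cluster structure on $P_l$-manifolds: the spectrum of $-\hbar^2\Delta_g/2$ accumulates in clusters around the values $E_n^{(0)}=\hbar^2\lambda_n^2/2$ with $\hbar\lambda_n=\frac{2\pi}{l}(n+\alpha/4)+O(\hbar)$, and the cluster eigenspaces $C_n$ have dimension of order $n^{d-1}\sim\hbar^{1-d}$.

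The first step is to establish the normal form. Under assumption~(2) (CROSS) this is Weinstein's classical averaging; under assumption~(1), $\eps_\hbar\hbar^{-1}\to\infty$, the same machinery behind Propositions~\ref{p:CROSS} and~\ref{p:invariance} provides an averaged operator with remainder $o(\hbar\eps_\hbar^2)$. The upshot I will exploit is: the eigenvalues of $P_\eps(\hbar)$ falling in an $O(\hbar)$-neighborhood of $E_n^{(0)}$ are, modulo $o(\hbar\eps_\hbar^2)$, exactly of the form
$$E_n^{(0)}+\eps_\hbar^2\mu,\qquad \mu\in\operatorname{Spec}\bigl(\Pi_n V\Pi_n\rest_{C_n}\bigr),$$
where $\Pi_n$ denotes the spectral projector onto $C_n$, and $\Pi_n V\Pi_n\rest_{C_n}$ is a Toeplitz-type operator on $C_n$ whose principal symbol on the symplectic quotient $G_M$ of the space of geodesics is precisely $\ml{I}_g(V)$.

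The hypothesis $\ml{C}(V)\neq M$ now enters: it forces $\ml{I}_g(V)$ to be non-constant and, in particular, to admit a regular value $a_0$, so that $\{\ml{I}_g(V)=a_0\}\subset G_M$ is a smooth hypersurface of positive $(2d-3)$-volume. The sharp semiclassical Weyl asymptotics for Toeplitz operators give, counted with multiplicity,
$$\#\bigl\{\mu\in\operatorname{Spec}(\Pi_n V\Pi_n\rest_{C_n})\cap[a_0-A\hbar,a_0+A\hbar]\bigr\}\sim A\,\hbar^{2-d}\,\operatorname{Vol}_{2d-3}\{\ml{I}_g(V)=a_0\}.$$
For $d\geq 2$ and $A$ fixed sufficiently large, this count exceeds one for $\hbar$ small. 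Either two distinct eigenvalues $\mu_1\neq\mu_2$ thus coexist in the window with $|\mu_1-\mu_2|\leq 2A\hbar$, or else there is an eigenvalue of multiplicity at least $2$, in which case the $o(\hbar\eps_\hbar^2)$ remainder in the normal form lifts the degeneracy and produces two distinct eigenvalues of $P_\eps(\hbar)$ separated by $o(\hbar\eps_\hbar^2)$. In either case, choosing $n=n(\hbar)$ so that $E_n^{(0)}\in[E_0-\delta_0/2,E_0+\delta_0/2]$ places the corresponding pair inside the required window and yields
$$s_0(\hbar)\leq 2A\,\eps_\hbar^2\hbar+o(\hbar\eps_\hbar^2)\leq c_0\,\hbar\eps_\hbar^2.$$

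The hard part will be Step~1 under assumption~(1): establishing the normal form with the sharp $o(\hbar\eps_\hbar^2)$ remainder when $\eps_\hbar$ is only assumed to dominate $\hbar$, without the stabilizing size constraint $\eps_\hbar\ll\sqrt{\hbar}$ that keeps the unperturbed cluster structure cleanly separated. This is precisely the regime for which the averaging analysis developed in this paper (behind Propositions~\ref{p:CROSS}, \ref{p:invariance} and the proof of Theorem~\ref{t:theo-zoll}) is designed, and it is exactly where those tools enter the argument.
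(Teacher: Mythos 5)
Your proposal takes a genuinely different route (a direct spectral argument via cluster reduction and a Toeplitz--Weyl count), but its central step is left as a black box and it is precisely the step that fails in the regime covered by the statement. You claim that, modulo $o(\hbar\eps_{\hbar}^2)$, the eigenvalues of $P_{\eps}(\hbar)$ in an $O(\hbar)$-neighborhood of a cluster value $E_n^{(0)}$ are of the form $E_n^{(0)}+\eps_{\hbar}^2\mu$ with $\mu\in\operatorname{Spec}(\Pi_n V\Pi_n)$. The unperturbed clusters of $-\hbar^2\Delta_g/2$ have width $O(\hbar^2)$ and are separated by gaps of size $O(\hbar)$, so this band reduction requires $\eps_{\hbar}^2\ll\hbar$; but under hypothesis (1) one only knows $\eps_{\hbar}\gg\hbar$ (and under hypothesis (2) no upper bound at all is imposed), so $\eps_{\hbar}^2$ may be comparable to or much larger than the inter-cluster gap and the perturbation mixes clusters. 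The paper explicitly emphasizes that it does \emph{not} assume $\eps_{\hbar}\ll\sqrt{\hbar}$, unlike the spectral works~\cite{OV10, HaHiSj14} which need that assumption exactly to keep the cluster structure. Moreover, the averaging result you appeal to (Theorem~\ref{t:av}) only gives $[\la B_{\hbar}\ra,P_0(\hbar)]=\ml{O}(\hbar^3)$ (exact for a CROSS); it is a tool for propagating Wigner distributions, not a spectral normal form with $o(\hbar\eps_{\hbar}^2)$ accuracy, and the paper never proves such a normal form. Instead, the paper's proof of Proposition~\ref{p:level-spacing} avoids it entirely: it argues by contradiction, assuming $s_0(\hbar_n)\gg\hbar_n\eps_{\hbar_n}^2$ along a subsequence, chooses an intermediate time scale with $\tau_{\hbar}\eps_{\hbar}^2\to 0$ but $\tau_{\hbar}\tilde{s}_0(\hbar)\eps_{\hbar}^2\to\infty$, launches a coherent state on a geodesic not contained in $\ml{C}(V)$, uses part i) of Propositions~\ref{p:CROSS} and~\ref{p:invariance} plus the assumed spacing to kill the off-diagonal terms in the eigenfunction expansion, and then a quantum-ergodicity-type variance estimate and Bienaym\'e--Tchebychev to extract eigenfunctions concentrating on that geodesic, contradicting Corollary~\ref{c:zoll-coro}.

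There is a second, independent gap in your counting step: even granting the cluster reduction, the Weyl count in a window of width $A\hbar$ is a count \emph{with multiplicity} (and at that scale -- the mean level spacing -- such asymptotics are themselves delicate and not standard without further hypotheses on $\ml{I}_g(V)$). The count being $\geq 2$ is consistent with a single eigenvalue of $\Pi_n V\Pi_n$ of multiplicity two or more, and your assertion that the $o(\hbar\eps_{\hbar}^2)$ remainder ``lifts the degeneracy'' is unjustified: a perturbation need not split a multiple eigenvalue, and if it does not, no pair of \emph{distinct} eigenvalues at distance $O(\hbar\eps_{\hbar}^2)$ is produced, while $s_0(\hbar)$ is defined as an infimum over distinct eigenvalues only. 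So even in the small-perturbation regime where the band reduction is legitimate, the argument as written does not reach the conclusion without an additional idea ruling out total degeneracy inside the window.
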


The proof of this result will be given in Section~\ref{s:level-spacing}. We note that, thanks to the semiclassical Weyl's law~\cite{DiSj99}, one knows that $s_0(\hbar)>0$. In the case where $d=2$ and $\hbar^N\ll\eps_{\hbar}\ll\sqrt{\hbar}$ (where $N$ is some positive exponent related to the clustering of the unperturbed operator), a much stronger result was for instance obtained in~\cite{HaHiSj14}. In fact, it was proved in this reference that, near regular values of $\ml{I}_g(V)$, one can obtain an asymptotic expansion of the eigenvalues with a level spacing which is exactly of order $\hbar\eps_{\hbar}^2$. Compared with this result, the above Proposition only provides a simple criterium for the existence of distinct eigenvalues which are asymptotically at distance less than $\hbar\eps_{\hbar}^2$. On the other hand, it is valid in any dimension and even for strong perturbation, meaning $\eps_{\hbar}\geq\sqrt{\hbar}$. 

\bigskip
\noindent\textbf{Acknowledgments.} Much of this work was done while the first author was visiting Laboratoire Paul Painlev\'e at Université de Lille 1 in October 2013 in the framework of the CEMPI program (ANR-11-LABX-0007-01). He wishes to thank this institution for its warm hospitality and support. The authors thank San V\~u Ng\d{o}c for useful discussions on integrable systems.
\section{Main results: propagation of semiclassical measures}\label{s:main}

We now describe our main results, that are formulated using the notion of semiclassical or Wigner measure.

\subsection{Semiclassical measures} In order to study the asymptotic properties of the solutions of~\eqref{e:perturbed-schr}, we will make use of the so-called semiclassical measures~\cite{Ge91}, or more precisely of their time dependent version~\cite{Ma09} -- see also appendix~\ref{a:measures} for a brief reminder. We now recall their construction. For a given $t$ in $\IR$, we denote the Wigner distribution at time $t\tau_{\hbar}$ by
\begin{equation}\label{e:wigner}w_{\hbar}(t\tau_{\hbar})(a):=\left\la v_{\hbar}(t\tau_{\hbar}),\Oph(a)v_{\hbar}(t\tau_{\hbar})\right\ra,
\end{equation}
where $\Oph(a)$ is a $\hbar$-pseudodifferential operator with principal symbol $a\in \ml{C}^{\infty}_c(T^*M)$ -- see appendix~\ref{a:pdo}. This quantity represents the distribution of the solution of~\eqref{e:perturbed-schr} in the phase space $T^*M$. 

Recall now that we can extract a subsequence $\hbar_n\rightarrow 0^+$ as $n\rightarrow +\infty$ such that, for every $a$ in $\ml{C}^{\infty}_c( T^*M)$ and for every $\theta$ in $L^1(\IR)$,
$$\lim_{\hbar_n\rightarrow 0^+}\int_{\IR\times T^*M}\theta(t)a(x,\xi) w_{\hbar_n}(t\tau_{\hbar_n},dx,d\xi)dt=\int_{\IR\times T^*M}\theta(t)a(x,\xi)\mu(t,dx,d\xi)dt,$$
where $(t,x,\xi)\mapsto\mu(t,x,\xi)$ belongs to $L^{\infty}(\IR,\ml{M}( T^*M))$, with $\ml{M}( T^*M)$ the set of finite complex measures carried on $T^*M$. Recall also that, for a.e. $t\in\IR$, $\mu(t,\cdot)$ is in fact a \emph{probability measure} which is carried on $\mathring{T}^*M$ and which is \emph{invariant by the geodesic flow} $\varphi^s$ on $T^*M$. For instance, $\mu(t,\cdot)$ can be the normalized Lebesgue measure along closed orbit of the geodesic flow. We refer to appendix~\ref{a:measures} for a brief reminder of these results from~\cite{Ma09}.

We will denote by $\ml{M}(\tau,\eps)$ the set of accumulation
points of the sequences 
$$\mu_{\hbar}:(t,x,\xi)\longmapsto
w_{\hbar}(t\tau_{\hbar},x,\xi)$$ as $(u_{\hbar})$ 
varies
among normalized sequences satisfying~\eqref{e:hosc}
and~\eqref{e:shosc}. 

\begin{rema}  
Thanks to the frequency assumption~\eqref{e:hosc}, one can also
verify that $\ml{N}(\tau,\eps)$ corresponds in fact to the
projections on $M$ of the elements of $\ml{M}(\tau,\eps)$.
\end{rema}

\subsection{Propagation at different time scales}\label{ss:prop}

The Zoll structure on $M$
allows to prove that every element in $\ml{M}(\tau,\eps)$ satisfies
an additional invariance or transport property, depending on the
relative size of $\tau$ and $\eps$. 

We denote by
$\varphi^t_V$ the Hamiltonian flow associated to the Radon transform of the potential $\ml{I}_g(V)$.
Note that $\varphi^{t}_V$ commutes with the geodesic flow
$\varphi^s$.

Let us start presenting our results in the particular case when
$(M,g)$ is a Compact Rank-One Symmetric Space, since they are
somewhat simpler to describe. It turns out, as it was already
stated in the introduction, that the time scale
$\tau_{\hbar}=\eps_{\hbar}^{-2}$ is critical for this problem.
More precisely, the following result holds:

\begin{prop}\label{p:CROSS}
Suppose $(M,g)$ is a Compact Rank-One Symmetric Space. Let $\mu
\in \ml{M}(\tau,\eps)$ and denote by $\mu_0$ the semiclassical
measure of the sequence of initial data used to generate $\mu$.
The following results hold.

\medskip
\noindent i) If $\tau_{\hbar}\eps_{\hbar}^2\rightarrow 0^+$ then
$\mu$ is continuous with respect to $t$ and, for every $b\in
\ml{C}^{\infty}_c(T^*M)$ and every $t\in\IR $:
\begin{equation*}
\mu(t)(b)=\mu_0(\ml{I}_g(b)).
\end{equation*}

\medskip
\noindent ii) If $\tau_{\hbar}\eps_{\hbar}^2=1$ then $\mu$ is
continuous with respect to $t$ and, for every $b\in
\ml{C}^{\infty}_c(T^*M)$ and every $t\in\IR $:
\begin{equation*}
\mu(t)(b)=\mu_0(\ml{I}_g(b)\circ\varphi_{V}^t).
\end{equation*}

\medskip
\noindent iii) If $\tau_{\hbar}\eps_{\hbar}^2\rightarrow +\infty$
then $\mu$ has an additional invariance property. For almost every
$t\in\IR$ and every $s\in\IR$:
\begin{equation*}
(\varphi_{V}^s)_*\mu(t)=\mu(t).
\end{equation*}
\end{prop}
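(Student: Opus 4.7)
My plan is to combine an averaging procedure adapted to the periodicity of the geodesic flow on a CROSS with a Duhamel/commutator argument at the level of Wigner distributions. By the Duistermaat-Guillemin and Colin de Verdi\`ere theory, on a CROSS there exists a first-order self-adjoint $\hbar$-pseudodifferential operator $A_\hbar$ with principal symbol $\|\xi\|_x$ such that $P_0(\hbar):=-\hbar^2\Delta_g/2=\tfrac{1}{2}A_\hbar^2-c\hbar^2$ for a constant $c$, and $e^{-ilA_\hbar/\hbar}$ is a scalar multiple of the identity. For $b\in\ml{C}^\infty_c(\mathring T^*M)$ I introduce the averaged operator
\begin{equation*}
\langle\Oph(b)\rangle:=\frac{1}{l}\int_0^l e^{isA_\hbar/\hbar}\Oph(b)e^{-isA_\hbar/\hbar}\,ds,
\end{equation*}
which commutes exactly with $A_\hbar$ (hence with $P_0(\hbar)$), and which, by Egorov's theorem, is a zero-th order $\hbar$-pseudodifferential operator with principal symbol $\ml{I}_g(b)$.

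Next I perform a cohomological decomposition: the symbol $b-\ml{I}_g(b)$ has zero mean along every geodesic, so the equation $\{p_0,r\}=b-\ml{I}_g(b)$ (with $p_0=\|\xi\|_x^2/2$) is solvable by the $\varphi^s$-average of $-s\,b$ over $[0,l]$; quantizing,
\begin{equation*}
\Oph(b)-\langle\Oph(b)\rangle=\frac{i}{\hbar}[P_0(\hbar),\Oph(r)]+\hbar R_\hbar.
\end{equation*}
Writing $[P_0(\hbar),\Oph(r)]=[P_\eps(\hbar),\Oph(r)]-\eps_\hbar^2[V,\Oph(r)]$ and integrating by parts in $t$ against $\theta\in L^1(\IR)$, the contribution of $\Oph(b)-\langle\Oph(b)\rangle$ to the Wigner integral is $O(1/\tau_\hbar)+O(\eps_\hbar^2)+O(\hbar)$, hence negligible. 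I thus reduce to analyzing $G_\hbar(t):=\la v_\hbar(t\tau_\hbar),\langle\Oph(b)\rangle v_\hbar(t\tau_\hbar)\ra$. Differentiating in $t$ and using $[\langle\Oph(b)\rangle,P_0(\hbar)]=0$ yields
\begin{equation*}
\frac{d}{dt}G_\hbar(t)=\tau_\hbar\eps_\hbar^2\la v_\hbar,\Oph(\{V,\ml{I}_g(b)\})v_\hbar\ra+O(\tau_\hbar\eps_\hbar^2\hbar).
\end{equation*}
Since every element of $\ml{M}(\tau,\eps)$ is $\varphi^s$-invariant, the identity $\mu(t)(\{V,\ml{I}_g(b)\})=\mu(t)(\ml{I}_g(\{V,\ml{I}_g(b)\}))=\mu(t)(\{\ml{I}_g(V),\ml{I}_g(b)\})$ lets me replace the potential by its Radon transform in the limit.

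The three regimes then fall out. If $\tau_\hbar\eps_\hbar^2\to 0^+$, the right-hand side of the derivative identity vanishes and $\mu(t)(b)=\mu_0(\ml{I}_g(b))$ for every $t$, giving (i). If $\tau_\hbar\eps_\hbar^2=1$, the limiting transport equation $\partial_t\mu(t)(b)=\mu(t)(\{\ml{I}_g(V),\ml{I}_g(b)\})$ with initial datum $\mu_0(\ml{I}_g(b))$ is uniquely solved by $\mu(t)(b)=\mu_0(\ml{I}_g(b)\circ\varphi^t_V)$, giving (ii). If $\tau_\hbar\eps_\hbar^2\to+\infty$, dividing the derivative identity by $\tau_\hbar\eps_\hbar^2$ and using $|G_\hbar|\le\|b\|_\infty$ forces $\mu(t)(\{\ml{I}_g(V),\ml{I}_g(b)\})=0$ for a.e. $t$ and every test symbol $b$, which is equivalent to the $\varphi^s_V$-invariance asserted in (iii). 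The main technical obstacle is the very first step: justifying that $\langle\Oph(b)\rangle$ is a genuine zero-th order $\hbar$-pseudodifferential operator with principal symbol $\ml{I}_g(b)$. This is precisely where the CROSS hypothesis is used, since the scalar nature of $e^{-ilA_\hbar/\hbar}$ makes the $l$-periodic average well-defined without any subprincipal correction; on a general Zoll manifold this scalar simplification fails and one has to track geometric remainders, which explains the more involved statement of Proposition~\ref{p:invariance}.
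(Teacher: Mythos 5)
Your argument is correct and follows essentially the same route as the paper's proof: the exactly commuting averaged operator built from the clustered spectrum on a CROSS (Theorem~\ref{t:av} together with Remarks~\ref{r:CROSS} and~\ref{r:CROSS-bis}), the Duhamel/commutator identity for the averaged Wigner quantity, and the three regimes treated by direct integration, by solving the limiting transport equation, and by integrating by parts and dividing by $\tau_{\hbar}\eps_{\hbar}^2$, respectively. The only cosmetic difference is your homological-equation corrector handling $\Oph(b)-\langle\Oph(b)\rangle$ at the operator level, which the paper bypasses by simply invoking the geodesic-flow invariance of $\mu(t)$ (so that $\mu(t)(b)=\mu(t)(\ml{I}_g(b))$), a fact you in any case also use in the bracket identity $\mu(t)(\{V,\ml{I}_g(b)\})=\mu(t)(\{\ml{I}_g(V),\ml{I}_g(b)\})$.
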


This result can be obtained by similar arguments as the proof of Propositions
\ref{p:invariance}, \ref{p:invariancesmall} presented below, which
describe additional properties satisfied by the elements of
$\ml{M}(\tau,\eps)$ in the more general case of $P_l$-manifolds. In this case, one must take into account a certain $(\varphi^s)$-invariant function $q_0$ on $\mathring{T}^*M$ that depends on the metric $g$. Let
$\varphi_{q_0}^t$ and $\varphi_{V+q_0}^t$ denote respectively the
Hamiltonian flows on $\mathring{T}^*M$ generated by $q_0$ and
$\ml{I}_g(V)+q_0$.

The first of these result is concerned with the ``big perturbation"
regime, when the square root of the size of the perturbation
dominates the characteristic length scale of oscillations. 

\begin{prop}\label{p:invariance}
Suppose $M$ is endowed with a $P_l$-metric $g$ and that:
$$\eps_{\hbar}\ge\hbar.$$
There exists a smooth, $0$-homogeneous and $(\varphi^s)$-invariant
function $q_0$ defined on $\mathring{T}^*M$ (and depending only on $(M,g)$)
such that, for every $\mu\in\ml{M}(\tau,\eps)$ associated to a
sequence of initial data with an unique semiclassical measure
$\mu_0$, the following holds:

\medskip
\noindent i) If $\tau_{\hbar}\eps_{\hbar}^2\rightarrow 0^+$, then,
$\mu$ is continuous in the $t$ variable and, for every $b\in
\ml{C}^{\infty}_c(T^*M)$ and every $t\in\IR $:
\begin{equation} \label{e:smalle}
\mu(t)(b)=\mu_0(\ml{I}_g(b)).
\end{equation}

\medskip
\noindent ii) If $\tau_{\hbar}\eps_{\hbar}^2=1$ then $\mu$ is
continuous in the $t$ variable and, for every $b\in
\ml{C}^{\infty}_c(T^*M)$ and every $t\in\IR $:
\begin{equation}\label{e:criticale1}
\mu(t)(b)=\mu_0(\ml{I}_g(b)\circ\varphi_{V+q_0}^t), \text{ if }
\eps_{\hbar} = \hbar,
\end{equation}
\begin{equation}\label{e:criticale2}
\mu(t)(b)=\mu_0(\ml{I}_g(b)\circ\varphi_{V}^t), \text{ if }
\eps_{\hbar}\hbar^{-1}\rightarrow +\infty.
\end{equation}
\noindent iii) If $\tau_{\hbar}\eps_{\hbar}^2\rightarrow +\infty$,
then $\mu$ has an additional invariance property. For almost every
$t\in\IR$ and every $s\in\IR$:
\begin{equation}\label{e:bige1}
(\varphi_{V+q_0}^s)_*\mu(t)=\mu(t), \text{ if } \eps_{\hbar} = \hbar,
\end{equation}
\begin{equation}\label{e:bige2}
(\varphi_{V}^s)_*\mu(t)=\mu(t), \text{ if }
\eps_{\hbar}\hbar^{-1}\rightarrow +\infty.
\end{equation}
\end{prop}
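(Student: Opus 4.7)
The plan is to combine the Colin de Verdi\`ere--Duistermaat--Guillemin normal form for the Laplacian on a $P_l$-manifold with an interaction-picture analysis of the perturbation $\eps_\hbar^2 V$, and then to read off the three regimes from the competition between the time scale $\tau_\hbar$, the perturbation size $\eps_\hbar^2$, and the quantum correction $\hbar^2$. The first step is to recall that, microlocally away from the zero section, there exists a first-order self-adjoint $\hbar$-pseudodifferential operator $\widetilde P(\hbar)$ commuting with $P_0(\hbar):=-\hbar^2\Delta_g/2$, whose spectrum lies in $(2\pi\hbar/l)\IZ+\hbar\alpha$, and such that
\begin{equation*}
\sqrt{2P_0(\hbar)}=\widetilde P(\hbar)+\hbar^2\,\Oph(q_0)+O_{L^2\to L^2}(\hbar^3),
\end{equation*}
where $q_0$ is a smooth, $0$-homogeneous and $\varphi^s$-invariant function on $\mathring T^*M$. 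Squaring this yields $P_0(\hbar)=\tfrac{1}{2}\widetilde P(\hbar)^2+\hbar^2\,\widetilde P(\hbar)\Oph(q_0)+O(\hbar^3)$: the first summand has strictly periodic propagator up to a global phase (clustering of $\widetilde P$), while the second encodes the slow Hamiltonian flow generated by $q_0$ on the space of geodesics.

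Next I would pass to the interaction picture. Writing $U_0(t):=e^{-itP_0(\hbar)/\hbar}$ and $v_\hbar(t)=U_0(t)\widetilde v_\hbar(t)$, one has
\begin{equation*}
i\hbar\,\partial_t\widetilde v_\hbar=\eps_\hbar^2\,V_H(t)\,\widetilde v_\hbar,\qquad V_H(t):=U_0(-t)V\,U_0(t),
\end{equation*}
so that for a test symbol $b\in\ml{C}^\infty_c(\mathring T^*M)$, Egorov's theorem converts $w_\hbar(\tau_\hbar t)(b)$ into an expectation of $b\circ\varphi^{\tau_\hbar t}$ modulated by the slow rotation due to $\hbar^2\,\widetilde P\,\Oph(q_0)$; the $l$-periodicity of $\varphi^s$ then replaces $b\circ\varphi^{\tau_\hbar t}$ by its geodesic average $\ml{I}_g(b)$ modulo a correction carried by this slow rotation. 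In the subcritical regime (i) where $\tau_\hbar\eps_\hbar^2\to 0$, a Duhamel estimate gives $\widetilde v_\hbar(\tau_\hbar t)=u_\hbar+o_{L^2}(1)$, and the $q_0$-rotation accumulates only by $\tau_\hbar\hbar^2\le\tau_\hbar\eps_\hbar^2\to 0$ (when $\eps_\hbar=\hbar$), which is negligible. Passing to the limit produces $\mu(t)(b)=\mu_0(\ml{I}_g(b))$, i.e.\ \eqref{e:smalle}.

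In the critical regime (ii) with $\tau_\hbar\eps_\hbar^2=1$, a mean-ergodic argument shows that the time-average of $V_H$ over one Zoll period equals $\Oph(\ml{I}_g(V))+O(\hbar)$, so $\widetilde v_\hbar(\tau_\hbar t)$ is asymptotically propagated by an averaged Hamiltonian. When $\eps_\hbar\hbar^{-1}\to+\infty$, one has $\tau_\hbar\hbar^2=\hbar^2/\eps_\hbar^2\to 0$, so the $q_0$-correction drops and only $\ml{I}_g(V)$ survives, giving \eqref{e:criticale2}. When $\eps_\hbar=\hbar$, the two corrections are of the same order $\tau_\hbar\hbar^2=1$ and combine additively, producing the Hamiltonian flow of $\ml{I}_g(V)+q_0$ and hence \eqref{e:criticale1}. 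Both formulas follow after applying an Egorov-type theorem for the averaged slow flow, which commutes with $\varphi^s$, so that $\ml{I}_g(b)$ is a legitimate observable to push forward. Finally, for case (iii) I would use the commutator identity
\begin{equation*}
-\int_\IR\theta'(t)\,w_\hbar(\tau_\hbar t)(\ml{I}_g(b))\,dt=\frac{i\tau_\hbar}{\hbar}\int_\IR\theta(t)\,\la v_\hbar(\tau_\hbar t),[P_\eps(\hbar),\Oph(\ml{I}_g(b))]v_\hbar(\tau_\hbar t)\ra\,dt.
\end{equation*}
Since $\ml{I}_g(b)$ is $\varphi^s$-invariant, the leading commutator with $P_0(\hbar)$ vanishes and the normal form delivers $[P_0(\hbar),\Oph(\ml{I}_g(b))]=(\hbar^3/i)\,\Oph(\{q_0,\ml{I}_g(b)\})+O(\hbar^4)$, while $[\eps_\hbar^2V,\Oph(\ml{I}_g(b))]=(\hbar\eps_\hbar^2/i)\,\Oph(\{V,\ml{I}_g(b)\})+O(\hbar^2\eps_\hbar^2)$, and averaging $\{V,\ml{I}_g(b)\}$ over one Zoll period gives $\{\ml{I}_g(V),\ml{I}_g(b)\}$. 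Dividing by $\tau_\hbar\eps_\hbar^2\to+\infty$ and passing to the limit kills the left-hand side, yielding the $\{\ml{I}_g(V)+q_0,\ml{I}_g(b)\}$-invariance of $\mu(t)$ when $\eps_\hbar=\hbar$ and the $\{\ml{I}_g(V),\ml{I}_g(b)\}$-invariance when $\eps_\hbar\gg\hbar$, i.e.\ \eqref{e:bige1}--\eqref{e:bige2}.

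The main obstacle will be the precise control of subprincipal corrections in the normal form and of Egorov remainders over the extremely long times $\tau_\hbar=O(\hbar^{-2})$, especially in the borderline case $\eps_\hbar=\hbar$ where the $q_0$-contribution cannot be discarded and must be tracked on an equal footing with the potential. This forces one to push the Colin de Verdi\`ere normal form to a sufficiently high order and to verify that Heisenberg-type error bounds remain uniform as $\hbar\to 0$ on the chosen time scale; the delicate identification of $q_0$ as the principal symbol of the first genuine obstruction to periodicity of $U_0(l)$ is what dictates the distinction between the two subcases $\eps_\hbar=\hbar$ and $\eps_\hbar\gg\hbar$ throughout the proof.
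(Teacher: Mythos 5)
Your plan follows the same broad strategy as the paper (the Colin de Verdi\`ere--Duistermaat--Guillemin decomposition $\tfrac{l}{2\pi}\sqrt{-\Delta_g}=A+\tfrac{\alpha}{4}+Q$ with $\mathrm{Sp}(A)\subset\IN$, then a Duhamel/commutator argument split into the three regimes), and your treatment of the subcritical case i) is essentially fine. The genuine gap is in cases ii) and iii): you work throughout with the quantization of the classical average, $\Oph(\ml{I}_g(b))$, whereas the paper's proof hinges on the \emph{quantum} average $\la B_{\hbar}\ra=\frac{1}{2\pi}\int_0^{2\pi}e^{itA}\Oph(b)e^{-itA}\,dt$, which commutes with $A$ \emph{exactly}. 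Your key identity $[P_0(\hbar),\Oph(\ml{I}_g(b))]=\frac{\hbar^3}{i}\Oph(\{q_0,\ml{I}_g(b)\})+\ml{O}(\hbar^4)$ is not justified, for two distinct reasons. First, with the chart-based quantization (only canonical modulo $\ml{O}(\hbar)$), the vanishing of $\{p_0,\ml{I}_g(b)\}$ only yields $[P_0(\hbar),\Oph(\ml{I}_g(b))]=\hbar^2\Oph(r_2)+\ml{O}(\hbar^3)$ with a remainder symbol $r_2$ that is in general neither zero nor of the form $\{p_0,\cdot\}$; in regime iii) with $\eps_\hbar=\hbar$, after multiplying by $\tau_\hbar/\hbar$ and normalizing by $\tau_\hbar\eps_\hbar^2$ this term contributes $\hbar^{-1}\la w_{\hbar}(t\tau_\hbar),r_2\ra$, which is of the indeterminate form $\infty\cdot\mu(t)(r_2)$ and destroys the limiting argument. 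Second, even if that term were absent, the $\hbar^3$-coefficient cannot be identified with $\{q_0,\ml{I}_g(b)\}$ by symbolic calculus alone: $q_0$ is a global, dynamical object (the principal symbol of $(2\pi/l)^2(AQ+QA)$), and its appearance in the commutator is obtained in the paper only through the exact relation $[\la B_{\hbar}\ra,A]=0$, which reduces $[\la B_{\hbar}\ra,P_0(\hbar)]$ to $\frac{2\pi^2\hbar^2}{l^2}[\la B_{\hbar}\ra,\chi(\tilde P_0(\hbar))Q_0]+\ml{O}(\hbar^{\infty})$ (Theorem~\ref{t:av} and Remark~\ref{r:zelditch}). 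The same issue undermines your regime ii): the ``mean-ergodic'' replacement of $V_H(t)$ by $\Oph(\ml{I}_g(V))$ over times as long as $\hbar^{-2}$, and the ``additive'' combination with the $q_0$-drift when $\eps_\hbar=\hbar$, is precisely the statement to be proved; the paper proves it by differentiating $\la u_{\hbar},e^{\frac{it\tau_\hbar P_{\eps}(\hbar)}{\hbar}}\la B_{\hbar}\ra e^{-\frac{it\tau_\hbar P_{\eps}(\hbar)}{\hbar}}u_{\hbar}\ra$, for which the commutator is exactly $\tau_\hbar\hbar[\Oph(q_0),\la B_{\hbar}\ra]+\frac{\tau_\hbar\eps_\hbar^2}{\hbar}[V,\la B_{\hbar}\ra]+\ml{O}(\tau_\hbar\hbar^3)$, and then uses the geodesic invariance \eqref{e:inv} together with the bracket identity \eqref{e:poiss} to convert this into transport (ii) or invariance (iii) for $\mu(t)$.

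Two smaller points. If, as you write, the $\hbar^2$-correction to $\sqrt{2P_0(\hbar)}$ has a $0$-homogeneous symbol, then squaring produces an effective correction to $P_0(\hbar)$ with symbol $\|\xi\|q_0$, which is $1$-homogeneous; to match the statement (and the paper) you should define $q_0$ at the level of $P_0(\hbar)$, as the $0$-homogeneous principal symbol of $(2\pi/l)^2(AQ+QA)$ (the coefficient in the square-root expansion is $(-1)$-homogeneous). Finally, your closing remark correctly identifies the control of subprincipal terms over times $\tau_\hbar\sim\hbar^{-2}$ as the main obstacle, but the proposal does not supply the device that resolves it; replacing $\Oph(\ml{I}_g(b))$ by $\la B_{\hbar}\ra$ everywhere, and quoting Theorem~\ref{t:av}, is the missing step that makes all three regimes go through.
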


The next result deals with the ``small perturbation" regime, when
$\hbar$ dominates $\eps_\hbar$ (and therefore, the time scale must
be compared to $\hbar^2$ instead of $\eps_{\hbar}^2$). As it can
be expected, the effect of the perturbation in this case is
negligible. We however present the precise statement for the sake
of completeness.

\begin{prop}\label{p:invariancesmall}
Suppose $M$ is endowed with a $P_l$-metric $g$ and let $q_0$ be
the function introduced in Proposition \ref{p:invariance}. Suppose
that: $$\eps_{\hbar}\hbar^{-1}\rightarrow 0^+.$$ Then, for every
$\mu\in\ml{M}(\tau,\eps)$ associated to a sequence of initial data
with an unique semiclassical measure $\mu_0$, the following holds:

\medskip
\noindent i) If $\tau_{\hbar}\hbar^2\rightarrow 0^+$, then, $\mu$
is continuous in the $t$ variable and, for every $b\in
\ml{C}^{\infty}_c(T^*M)$ and every $t\in\IR $:
\begin{equation} \label{e:smalle}
\mu(t)(b)=\mu_0(\ml{I}_g(b)).
\end{equation}

\medskip
\noindent ii) If $\tau_{\hbar}\hbar^2=1$ then $\mu$ is continuous
in the $t$ variable and, for every $b\in \ml{C}^{\infty}_c(T^*M)$
and every $t\in\IR $:
\begin{equation}\label{e:criticale0}
\mu(t)(b)=\mu_0(\ml{I}_g(b)\circ\varphi_{q_0}^t).
\end{equation}

\medskip
\noindent iii) If $\tau_{\hbar}\hbar^2\rightarrow +\infty$, then
$\mu$ has an additional invariance property. For almost every
$t\in\IR$ and every $s\in\IR$:
\begin{equation}\label{e:bige0}
(\varphi_{q_0}^s)_*\mu(t)=\mu(t).
\end{equation}
\end{prop}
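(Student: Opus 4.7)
The plan is to adapt, essentially verbatim, the argument used for Proposition~\ref{p:invariance}, with $\hbar$ playing the role of $\eps_{\hbar}$ as the reference semiclassical parameter and with the observation that the potential perturbation drops out in the limit. The starting point is the Heisenberg identity
\[
\frac{d}{dt}w_{\hbar}(t\tau_{\hbar})(a) = \frac{i\tau_{\hbar}}{\hbar}\bigl\langle v_{\hbar}(t\tau_{\hbar}),[P_{\eps}(\hbar),\Oph(a)]v_{\hbar}(t\tau_{\hbar})\bigr\rangle,
\]
for $a\in\ml{C}^{\infty}_c(\mathring{T}^*M)$. I split the commutator into a geometric piece $[-\hbar^2\Delta_g/2,\Oph(a)]$ and a potential piece $\eps_{\hbar}^2[V,\Oph(a)]$, whose principal symbols are $\hbar\{p,a\}/i$ and $\hbar\eps_{\hbar}^2\{V,a\}/i$ respectively, where $p(x,\xi)=\|\xi\|_x^2/2$. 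After multiplying by $\tau_{\hbar}/\hbar$, the two terms weigh as $\tau_{\hbar}\{p,a\}$ and $\tau_{\hbar}\eps_{\hbar}^2\{V,a\}$.

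The Zoll hypothesis now enables the Weinstein-type averaging used for Proposition~\ref{p:invariance}. One first solves the cohomological equation $\{p,b\}=a-\ml{I}_g(a)$ on $\mathring{T}^*M$, which shows that only the $\varphi^s$-invariant component $\ml{I}_g(a)$ survives at leading order. The next-order effect comes from the Colin de Verdi\`ere--Duistermaat--Guillemin normal form for $\sqrt{-\Delta_g}$ on the $P_l$-manifold; it produces the subprincipal $0$-homogeneous, $\varphi^s$-invariant symbol $q_0$ already introduced in Proposition~\ref{p:invariance}. After this two-step averaging the Heisenberg dynamics reduces, modulo $o(1)$, to
\[
\frac{d}{dt}w_{\hbar}(t\tau_{\hbar})(a) = \tau_{\hbar}\hbar\, w_{\hbar}(t\tau_{\hbar})\bigl(\{q_0,\ml{I}_g(a)\}\bigr) + \frac{\tau_{\hbar}\eps_{\hbar}^2}{\hbar}\, w_{\hbar}(t\tau_{\hbar})\bigl(\{\ml{I}_g(V),\ml{I}_g(a)\}\bigr) + o(1).
\]
Since $\eps_{\hbar}\hbar^{-1}\to 0$, the coefficient $\tau_{\hbar}\eps_{\hbar}^2/\hbar$ is negligible compared with $\tau_{\hbar}\hbar$, so the potential contribution disappears and only the $q_0$-term governs the limit.

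The three statements now correspond to the asymptotic behaviour of $\tau_{\hbar}\hbar^2$. In case~(i), both weights tend to zero, so $w_{\hbar}(t\tau_{\hbar})(a)$ converges to the $t$-independent limit $\mu_0(\ml{I}_g(a))$. In case~(ii), the critical weight $\tau_{\hbar}\hbar=\hbar^{-1}$ persists and the limiting ODE for $t\mapsto\mu(t)(\ml{I}_g(a))$ integrates to the Hamiltonian flow $\varphi_{q_0}^t$ applied to $\ml{I}_g(a)$, yielding \eqref{e:criticale0}. In case~(iii), the same ODE is run for an effective time tending to infinity, which forces the invariance $(\varphi_{q_0}^s)_*\mu(t)=\mu(t)$ for every $s\in\IR$ and almost every $t$.

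The main technical obstacle is uniform control of the remainders in the averaging as $\tau_{\hbar}\to\infty$: a generic Egorov estimate would degrade exponentially. This is circumvented precisely by the Zoll periodicity, since, modulo the $0$-order correction captured by $q_0$, the operator $e^{-il\sqrt{-\hbar^2\Delta_g}/\hbar}$ reduces to a scalar phase, and the single-period averaging identity may be iterated without exponential loss. The frequency localisation assumptions~\eqref{e:hosc}--\eqref{e:shosc} then supply the uniform cut-offs needed for these error estimates to be effective, and passing to the weak limit along an extracting subsequence with unique semiclassical initial measure $\mu_0$ yields the claimed formulas.
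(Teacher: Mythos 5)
Your overall strategy is the paper's: average the observable using the Colin de Verdi\`ere--Duistermaat--Guillemin decomposition of $\sqrt{-\Delta_g}$ to produce $\la B_\hbar\ra$ with symbol $\ml{I}_g(b)$ and the subprincipal correction $q_0$, note that the potential contribution is negligible because $\eps_\hbar\ll\hbar$, and then distinguish three regimes according to $\tau_\hbar\hbar^2$, using an integration by parts in $t$ (or an ``infinite effective time'' argument) for the invariance in case (iii). However, there is a concrete error in your effective equation: the weights you display are off by one power of $\hbar$, and this is not cosmetic, since the whole point of the proposition is the identification of the critical scale. After averaging, $[P_0(\hbar),\la B_\hbar\ra]$ equals $\hbar^2$ times a commutator of two \emph{bounded} ($0$-th order) operators, and that commutator itself carries the factor $\frac{\hbar}{i}$ in front of the Poisson bracket; hence $\frac{\tau_\hbar}{\hbar}[P_{\eps}(\hbar),\la B_\hbar\ra]$ produces the $q_0$-transport term with weight $\tau_\hbar\hbar^{2}$ and the potential term with weight $\tau_\hbar\eps_\hbar^{2}$, not $\tau_\hbar\hbar$ and $\tau_\hbar\eps_\hbar^{2}/\hbar$ as you wrote. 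With your weights the critical time would be $\tau_\hbar\sim\hbar^{-1}$, contradicting the statement, and your own case discussion becomes incoherent: in case (i), $\tau_\hbar\hbar^2\to0$ does not force $\tau_\hbar\hbar\to0$, and in case (ii) your ``critical weight $\tau_\hbar\hbar=\hbar^{-1}$'' diverges rather than persists. With the corrected bookkeeping (weight $\tau_\hbar\hbar^2$ for $q_0$, $\tau_\hbar\eps_\hbar^2\ll\tau_\hbar\hbar^2$ for $V$), your three-regime analysis does reduce to the paper's proof, using in addition $\mu(t)(b)=\mu(t)(\ml{I}_g(b))$ and $\{\ml{I}_g(q_0),\ml{I}_g(b)\}=\ml{I}_g(\{q_0,\ml{I}_g(b)\})$ to close the limiting transport equation.

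A secondary remark: your concern about Egorov estimates degrading exponentially over the time scale $\tau_\hbar$ is misplaced, and the mechanism you invoke (iterating the one-period propagator as a scalar phase) is not what is needed. In the paper's argument no propagation of observables over long times occurs at all: the observable $\la B_\hbar\ra$ is fixed, the exact (unitary) Schr\"odinger propagator acts only on the data, and the error terms are controlled by a \emph{uniform operator-norm} bound $\ml{O}(\hbar^3)+\ml{O}(\eps_\hbar^2\hbar)$ on the commutator, integrated over a compact interval in the rescaled time $t$. The only Egorov-type statement used is over the fixed period $2\pi$ of $e^{itA}$, in the construction of $\la B_\hbar\ra$ itself, together with the frequency cut-offs \eqref{e:hosc}--\eqref{e:shosc} to localize away from the zero section.
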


We note that the statements of these Propositions remain valid for
$V\equiv 0$ (in fact, the results of Proposition
\ref{p:invariance} are contained in those of Proposition
\ref{p:invariancesmall} in this case).

As it will become clear from our proof, the symbol $q_0$ is
related to the ``natural'' decomposition of the operator
$\sqrt{-\Delta_g}$ on Zoll manifolds -- see for instance Theorem
$1.1$ of~\cite{CdV79}. As stated in the Proposition, this symbol
depends only on the choice of the metric. We emphasize yet that it
is given by an hardly explicit formula -- see
Remark~\ref{r:zelditch} or~\cite{Ze96, Ze97}.

However, when $(M,g)$ is a Compact Rank-One Symmetric Space, one
can take $q_0=0$ and therefore it is possible to derive the result in
Proposition \ref{p:CROSS} without making any assumption regarding
the relative size of $\hbar$ and $\eps_\hbar$.

\begin{rema}
When $\ml{I}_g(V)$ is constant, the Hamiltonian flow $\varphi^t_V$ acts trivially, and some of the above statements are thus empty. This is the case, for instance, when $(M,g)=(\IS^d,\operatorname{can})$
and $V=V(x)$ does not depend on $\xi$ and is an odd function plus
a constant (see \cite{HelgBook}, Theorems 1.17, 1.23).
\end{rema}

\subsection{Application to the study of the quantum Loschmidt echo}\label{ss:echo} 
As another application of the methods developped to prove Proposition~\ref{p:CROSS},~\ref{p:invariance} and~\ref{p:invariancesmall}, we derive some properties on the so-called quantum Loschmidt echo. This quantity is defined as follows
$$\ml{E}_{\hbar}(t\tau_{\hbar}):=|\langle v_{\hbar}(t\tau_{\hbar}),v_{\hbar}^0(t\tau_{\hbar})\rangle|^2,$$
where 
\begin{itemize}
\item $v_{\hbar}(t\tau_{\hbar})$ is the solution of~\eqref{e:perturbed-schr};
\item $v_{\hbar}^0(t\tau_{\hbar})$ is the solution of~\eqref{e:perturbed-schr} when we pick $V\equiv 0$.
\end{itemize}
This quantity was introduced by Peres in~\cite{Pe84} and it allows to measure the sensitivity of a quantum system to perturbations of the Hamiltonian. Peres predicted that this quantity should typically goes to $0$ and that the decay rate indicates the chaotic or integrable nature of the underlying classical system. Since this seminal work of Peres~\cite{Pe84}, many progresses have been made in the physics literature regarding the asymptotic properties of this quantity, especially in the context of chaotic systems. We refer the reader to~\cite{GPSZ06, JaPe09, GJPW12} for recent surveys on these questions. Our approach allows to study a slightly related quantity:
$$F_{\hbar}(t\tau_{\hbar}):=\langle v_{\hbar}(t\tau_{\hbar}),v_{\hbar}^0(t\tau_{\hbar})\rangle.$$
Up to an extraction $\hbar_n\rightarrow 0$, one can suppose that there exists $F(t)$ in $\ml{D}'(\IR)$ such that, for every $\theta$ in $\ml{C}^{\infty}_c(\IR)$,
$$\lim_{\hbar_n\rightarrow 0}\int_{\IR}\theta(t)F_{\hbar_n}(t\tau_{\hbar_n})dt=\int_{\IR}\theta(t)F(t)dt.$$
Our last result gives a description of the limit distribution $F(t)$ in the context of $P_l$-manifolds:
\begin{prop}\label{t:loschmidt} Suppose $(M,g)$ is a $P_l$-manifold and that
$$\lim_{\hbar\rightarrow 0^+}\eps_{\hbar}\hbar^{-\frac{1}{2}}=0.$$
Suppose also that one the following condition holds:
\begin{enumerate}
 \item $\eps_{\hbar}\hbar^{-\frac{3}{2}}\rightarrow +\infty$,
 \item $(M,g)$ is a compact rank one symmetric space.
\end{enumerate}
Then, for every $t\mapsto F(t)$ associated to a sequence of initial data with an unique semiclassical measure $\mu_0$, the following holds

\medskip
\noindent i) If $\lim_{\hbar\rightarrow 0^+}\frac{\tau_{\hbar}\eps_{\hbar}^2}{\hbar}=0$, then, for every $t$ in $\IR$:
$$F(t)=1.$$
\medskip
\noindent ii)  If $\tau_{\hbar}=\frac{\hbar}{\eps_{\hbar}^2}$, then, $F(t)$ is continuous in the $t$ variable and, for every $t$ in $\IR$:
$$F(t)=\mu_0\left(e^{it\ml{I}_g(V)}\right).$$
\medskip
\noindent iii) If $\lim_{\hbar\rightarrow 0^+}\frac{\tau_{\hbar}\eps_{\hbar}^2}{\hbar}=+\infty$ and if $\{\ml{I}_g(V)=0\}=\emptyset$, then, for every $t$ in $\IR$,
$$F(t)=0.$$
\end{prop}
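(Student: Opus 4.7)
The plan is to write $F_\hbar(t\tau_\hbar) = \langle u_\hbar, W_\hbar(t) u_\hbar \rangle$ where
$$W_\hbar(t) := e^{i t\tau_\hbar P_\eps(\hbar)/\hbar}\, e^{-i t\tau_\hbar P_0(\hbar)/\hbar}, \qquad P_0(\hbar) := -\tfrac{1}{2}\hbar^2\Delta_g.$$
A direct differentiation gives
$$i\hbar\,\partial_t W_\hbar(t) = \tau_\hbar\eps_\hbar^2\, W_\hbar(t)\, V_I(t), \qquad V_I(t) := e^{it\tau_\hbar P_0(\hbar)/\hbar}\, V\, e^{-it\tau_\hbar P_0(\hbar)/\hbar},$$
so that $\|W_\hbar(t)-\mathrm{Id}\|_{L^2\to L^2}\leq |t|\,(\tau_\hbar\eps_\hbar^2/\hbar)\,\|V\|_{L^\infty}$. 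Case~(i) follows immediately from this estimate at any fixed $t$.

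For cases (ii) and (iii) the idea is to exploit the Zoll machinery underlying the proofs of Propositions~\ref{p:CROSS} and~\ref{p:invariance}. Using the Colin de Verdi\`ere-type decomposition $\sqrt{-\hbar^2\Delta_g}\sim\hbar(K_\hbar+\hbar\,\Oph(q_0))$, where $K_\hbar$ has pure-point spectrum inside $\tfrac{2\pi\hbar}{l}(\IZ+\alpha)$ and $q_0$ is the $0$-homogeneous $(\varphi^s)$-invariant symbol of Proposition~\ref{p:invariance}, a Weinstein-type averaging of the perturbation $\eps_\hbar^2 V$ along the $l$-periodic geodesic flow produces the normal form
$$W_\hbar(t) \,=\, \exp\!\Bigl(i\,\tfrac{t\tau_\hbar\eps_\hbar^2}{\hbar}\,\Oph(\ml{I}_g(V))\Bigr) \,+\, o_{\hbar\to 0}(1),$$
valid in the strong operator topology on $L^2(M)$, uniformly for $t$ in compact sets. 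The hypotheses of the statement control the $q_0$-correction: its contribution to the phase is of order $t\tau_\hbar\hbar = t\hbar^2/\eps_\hbar^2$ in the critical regime $\tau_\hbar\eps_\hbar^2=\hbar$, which tends to zero exactly when $\eps_\hbar\gg\hbar^{3/2}$; the CROSS alternative imposes $q_0\equiv 0$ directly; and the blanket requirement $\eps_\hbar\ll\sqrt{\hbar}$ preserves the cluster structure of the spectrum of $P_0(\hbar)$ under the perturbation.

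In case (ii), $t\tau_\hbar\eps_\hbar^2/\hbar = t$, and $e^{it\,\Oph(\ml{I}_g(V))}$ is by semiclassical functional calculus a zeroth-order pseudodifferential operator with principal symbol $e^{it\,\ml{I}_g(V)}$. The defining property of the semiclassical measure $\mu_0$ then gives
$$F(t) \,=\, \int_{T^*M} e^{it\,\ml{I}_g(V)(x,\xi)}\,\mu_0(dx,d\xi),$$
which is manifestly continuous in $t$. In case (iii), set $R_\hbar:=\tau_\hbar\eps_\hbar^2/\hbar\to+\infty$ and pair with $\theta\in\ml{C}^\infty_c(\IR)$:
$$\int_\IR\theta(t)\,F_\hbar(t\tau_\hbar)\,dt \,=\, \int_{T^*M}\widehat\theta\!\bigl(-R_\hbar\,\ml{I}_g(V)(x,\xi)\bigr)\,\mu_0(dx,d\xi) \,+\, o(1).$$
Under $\{\ml{I}_g(V)=0\}=\emptyset$ the argument $R_\hbar\,\ml{I}_g(V)(x,\xi)$ diverges pointwise on $\operatorname{supp}\mu_0$; since $\widehat\theta$ is bounded and vanishes at infinity, dominated convergence forces the integral to $0$, hence $F\equiv 0$ in $\ml{D}'(\IR)$.

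The main obstacle is the quantitative normal form used above, i.e. the $o_{\hbar\to 0}(1)$ approximation of $W_\hbar(t)$ by the pseudodifferential exponential. This requires iterating an averaging operator built from the Zoll structure of $(M,g)$ and tracking, in operator norm and over the long time scale $\tau_\hbar$, the commutator errors between $\Oph(\ml{I}_g(V))$, $V$, and the $q_0$-correction to $P_0(\hbar)$. The two alternative hypotheses in the statement are precisely the thresholds at which the $q_0$-driven phase and the second-order Duhamel remainder both remain controllable; the CROSS case bypasses the former since $q_0\equiv 0$.
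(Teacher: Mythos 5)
Your case (i) is fine: the Duhamel bound $\|W_\hbar(t)-\mathrm{Id}\|_{L^2\to L^2}\leq |t|\,\tau_\hbar\eps_\hbar^2\hbar^{-1}\|V\|_{L^\infty}$ gives $F(t)=1$ directly (this is even more elementary than the paper, which runs case (i) through the averaged operator). The genuine gap is in cases (ii) and (iii): everything there hangs on the normal form $W_\hbar(t)=\exp\bigl(i\,t\tau_\hbar\eps_\hbar^2\hbar^{-1}\Oph(\ml{I}_g(V))\bigr)+o(1)$ in the strong topology, uniformly on compact $t$-intervals, and you do not prove it — you yourself flag it as ``the main obstacle''. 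This is not a deferred detail, it is the whole content of the proposition, and as stated it is stronger than the hypotheses can deliver. In regime (iii) the time scale is only bounded below ($\tau_\hbar\gg\hbar\eps_\hbar^{-2}$, e.g.\ $\tau_\hbar=\hbar^{-10}$ is allowed), and every error your averaging/iterated-Duhamel scheme produces gets multiplied by $\tau_\hbar/\hbar$ \emph{before} any cancellation: the $\ml{O}(\hbar^{3})$ commutator error of Theorem~\ref{t:av} alone contributes $\ml{O}(\tau_\hbar\hbar^{2})$, and the second-order Duhamel term is of size $\tau_\hbar\eps_\hbar^{4}\hbar^{-1}$; neither is $o(1)$ under $\hbar^{3/2}\ll\eps_\hbar\ll\hbar^{1/2}$ once $\tau_\hbar$ is large (take $\eps_\hbar=\hbar^{0.6}$, $\tau_\hbar=\hbar^{-10}$), and the CROSS alternative only removes the first of these. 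Moreover, even granting the normal form, your case (iii) step equating $\la u_\hbar,\hat\theta(-R_\hbar\Oph(\ml{I}_g(V)))u_\hbar\ra$ with $\int\hat\theta(-R_\hbar\ml{I}_g(V))d\mu_0+o(1)$ uses symbol/functional calculus for symbols oscillating at scale $R_\hbar^{-1}$ with $R_\hbar=\tau_\hbar\eps_\hbar^2/\hbar\to\infty$ completely uncontrolled relative to $\hbar$; that is not justified.

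The paper avoids any operator-level normal form precisely because of this. It works with the matrix elements $M_\hbar(t\tau_\hbar)(b)=\la v_\hbar(t\tau_\hbar),\Oph(b)\,v_\hbar^{0}(t\tau_\hbar)\ra$, replaces $\Oph(b)$ by the averaged operator $\la B_\hbar\ra$ of Theorem~\ref{t:av} (so that $[\la B_\hbar\ra,P_0(\hbar)]=\ml{O}(\hbar^{3})$, $=0$ on a CROSS), and gets the exact derivative identity $\frac{d}{dt}\la v_\hbar,\la B_\hbar\ra v_\hbar^{0}\ra=-i\tau_\hbar\eps_\hbar^{2}\hbar^{-1}\la v_\hbar,V\la B_\hbar\ra v_\hbar^{0}\ra+\ml{O}(\tau_\hbar\hbar^{2})$. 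In regime (ii) ($\tau_\hbar=\hbar\eps_\hbar^{-2}$) the remainder is $\hbar^{3}\eps_\hbar^{-2}=o(1)$ exactly under hypothesis (1), and the limit object $M(t)$ — which is invariant by the geodesic flow because $\eps_\hbar\ll\sqrt{\hbar}$, a point your proposal never addresses but which is needed to replace $V$ by $\ml{I}_g(V)$ — solves a transport equation whose solution is $M(t)(b)=\mu_0(\ml{I}_g(b)e^{it\ml{I}_g(V)})$, giving (ii) with $b=1$. In regime (iii) one integrates by parts in $t$ and \emph{divides} by the large factor $\tau_\hbar\eps_\hbar^{2}/\hbar$, so the remainders become $\ml{O}(\hbar\tau_\hbar^{-1}\eps_\hbar^{-2})+\ml{O}(\hbar^{3}\eps_\hbar^{-2})=o(1)$ and one obtains $M(t)(\ml{I}_g(b)\ml{I}_g(V))=0$, i.e.\ $\operatorname{supp}M(t)\subset\{\ml{I}_g(V)=0\}=\emptyset$ and $F\equiv 0$. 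To salvage your formulation you would have to downgrade the normal form to such a weak, time-averaged statement with the errors divided by $R_\hbar$ — which is essentially the paper's argument — rather than the strong-topology approximation you assumed.
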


This Theorem will follow from Proposition~\ref{p:loschmidt} and its proof will make use of similar tools as the ones used to prove Proposition~\ref{p:CROSS},~\ref{p:invariance} and~\ref{p:invariancesmall}. Note that other mathematical studies of the long time properties of the quantum Loschmidt echo appeared recently in various geometric settings: $1$-dimensional systems~\cite{BolSc06, CoRo07}, negatively curved surfaces~\cite{Ri14}.

\section{Averaging, transport and invariance}

\label{s:averaging}

In this section, we will prove Propositions~\ref{p:CROSS},~\ref{p:invariance} and~\ref{p:invariancesmall}. It is organized as follows. First, we recall an averaging procedure due to Weinstein~\cite{We77} which we formulate in a semiclassical language following~\cite{Ch80, HeRo84, DiSj99}. Then, in paragraph~\ref{ss:invariance}, we deduce the above Propositions from this averaging Lemma, and, in paragraph~\ref{ss:proof-theo-zoll}, we derive the proof of a slightly stronger version of Theorem~\ref{t:theo-zoll} from these Propositions.

\subsection{Semiclassical averaging Lemma}

The following result is a quantum analogue of the averaging method
for classical dynamical systems.

\begin{theo}\label{t:av}
Let $(M,g)$ be a $P_l$-manifold. Then, for every $b \in
\ml{C}^{\infty}_c(\mathring{T}^*M)$, there exists an operator $\la B_{\hbar}\ra \in \Psi^{0,0}(M)$
whose principal symbol is the classical average:
\begin{equation}\label{e:ca}
\ml{I}_g(b)(x,\xi):=\frac{\|\xi\|_x}{l}\int_0^{\frac{l}{\|\xi\|_x}}b\circ
\varphi^{s}(x,\xi)ds,
\end{equation}
and which satisfies:
\begin{equation}\label{e:cm}
[\la B_{\hbar}\ra, P_0(\hbar)] = \ml{O}_{L^2 \rightarrow
L^2}(\hbar^{3}).
\end{equation}
In addition, if $(M,g)$ is a Compact Rank-One Symmetric Space,
$\la B_{\hbar}\ra$ can be chosen such that the above formula is
exact, that is:
\begin{equation}\label{e:cmCROSS}
[\la B_{\hbar}\ra, P_0(\hbar)] = 0.
\end{equation}
\end{theo}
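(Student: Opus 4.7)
The plan is to adapt Weinstein's averaging trick to the semiclassical setting. First I would pass from $P_0(\hbar) = -\hbar^2\Delta_g/2$ to the square-root operator $A_\hbar := \sqrt{-\hbar^2\Delta_g}$, whose principal symbol is $\|\xi\|_x$ and whose Hamiltonian flow on $\mathring T^*M$ is the unit-speed reparametrization of the geodesic flow, hence $l$-periodic. The Colin de Verdi\`ere--Duistermaat--Guillemin theorem (\cite{CdV79, DuGu75}) furnishes a self-adjoint pseudodifferential correction making the spectrum arithmetic: there exists $\tilde A_\hbar = A_\hbar + \hbar^2 Q_\hbar$ with $Q_\hbar\in\Psi^{0,0}(M)$ whose principal symbol $q_0$ is $0$-homogeneous and invariant by the geodesic flow, such that
\[
e^{i l\tilde A_\hbar/\hbar} = e^{i\alpha_\hbar}\operatorname{Id}
\]
microlocally on any compact of $\mathring T^*M$. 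On a CROSS the spectrum of $\sqrt{-\Delta_g+c}$ is already arithmetic for a suitable constant $c$, so one can take $Q_\hbar$ to be multiplication by a scalar.

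Next I would set $B_\hbar := \Op_\hbar(b)$ and define
\[
\la B_\hbar\ra := \frac{1}{l}\int_0^l e^{is\tilde A_\hbar/\hbar}\, B_\hbar\, e^{-is\tilde A_\hbar/\hbar}\, ds.
\]
By Egorov's theorem applied to the propagator generated by $\tilde A_\hbar/\hbar$, each integrand lies in $\Psi^{0,0}(M)$ with principal symbol $b\circ\varphi^s$ (the correction $\hbar^2 Q_\hbar$ does not affect the leading Heisenberg symbol). Averaging over a full period yields $\la B_\hbar\ra \in \Psi^{0,0}(M)$ with principal symbol $\ml I_g(b)$. By construction, and because $e^{i\alpha_\hbar}\operatorname{Id}$ commutes with $B_\hbar$, one obtains
\[
[\la B_\hbar\ra,\tilde A_\hbar] = \frac{1}{l}\bigl(e^{il\tilde A_\hbar/\hbar}B_\hbar e^{-il\tilde A_\hbar/\hbar} - B_\hbar\bigr) = 0
\]
on the microlocal region where $b$ is supported.

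To obtain \eqref{e:cm} I would pass back to $P_0(\hbar) = A_\hbar^2/2$. Squaring $A_\hbar = \tilde A_\hbar - \hbar^2 Q_\hbar$ gives
\[
2 P_0(\hbar) = \tilde A_\hbar^2 - \hbar^2\bigl(\tilde A_\hbar Q_\hbar + Q_\hbar \tilde A_\hbar\bigr) + \hbar^4 Q_\hbar^2,
\]
and the identity $[\la B_\hbar\ra,\tilde A_\hbar]=0$ together with the Leibniz rule yields
\[
[\la B_\hbar\ra, 2P_0(\hbar)] = -\hbar^2\bigl(\tilde A_\hbar\, [\la B_\hbar\ra, Q_\hbar] + [\la B_\hbar\ra, Q_\hbar]\, \tilde A_\hbar\bigr) + \hbar^4\,[\la B_\hbar\ra, Q_\hbar^2].
\]
Since $\la B_\hbar\ra$ is compactly microlocalized in $\mathring T^*M$ and $Q_\hbar\in\Psi^{0,0}$, the semiclassical symbol calculus gives $[\la B_\hbar\ra,Q_\hbar]=\ml O_{L^2\to L^2}(\hbar)$; on the microsupport of $\la B_\hbar\ra$ the operator $\tilde A_\hbar$ is bounded, and the right-hand side is $\ml O_{L^2\to L^2}(\hbar^3)$. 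In the CROSS case, with $Q_\hbar = c\operatorname{Id}$, the identity collapses to $2P_0(\hbar)=\tilde A_\hbar^2-\hbar^2 c$, so the commutator vanishes exactly, proving \eqref{e:cmCROSS}.

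The main obstacle is the careful invocation of the CdV--DG correction in the semiclassical regime with the \emph{right} order: it is crucial that $\tilde A_\hbar - A_\hbar$ has semiclassical order $\hbar^2$ (rather than $\hbar$), so that the squaring introduces a $\hbar^2$ rather than $\hbar$ prefactor, and the residual commutator is genuinely of size $\hbar^3$. This is where the $0$-homogeneous geometric symbol $q_0$ of Propositions \ref{p:CROSS}--\ref{p:invariancesmall} naturally enters, and it is a standard semiclassical reinterpretation of the classical CdV result along the lines of \cite{HeRo84, DiSj99}.
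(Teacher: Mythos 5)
Your construction is essentially the paper's own proof (Section \ref{s:averaging}): the Colin de Verdi\`ere--Duistermaat--Guillemin decomposition produces the exactly periodic propagator, averaging $\Oph(b)$ over one period gives $[\la B_\hbar\ra,\tilde{A}_\hbar]=0$ together with principal symbol $\ml{I}_g(b)$, and squaring turns the order-$(-1)$ correction into the $\hbar^2$-term whose commutator with $\la B_\hbar\ra$ gains one more power of $\hbar$, yielding \eqref{e:cm}; the only technical point you gloss over (and which the paper handles by inserting the cutoff $\chi(\tilde{P}_0(\hbar))$) is that $\hbar^{-2}(\tilde{A}_\hbar-A_\hbar)$ is a uniformly bounded element of $\Psi^{0,0}(M)$ only after microlocalizing away from the zero section, which is harmless since $b\in\ml{C}^\infty_c(\mathring{T}^*M)$. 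In the CROSS case your wording needs a one-line repair: the constant must go under the square root, i.e. $\tilde{A}_\hbar=\sqrt{-\hbar^2\Delta_g+\hbar^2c}$ with $c=(\pi\alpha/2l)^2$, so that $Q_\hbar$ is a function of $\Delta_g$ rather than literally $c\operatorname{Id}$ (with $\tilde{A}_\hbar=A_\hbar+\hbar^2c$ the propagator is not exactly periodic and $2P_0(\hbar)\neq\tilde{A}_\hbar^2-\hbar^2c$); with that choice your displayed identity is exact and \eqref{e:cmCROSS} follows precisely as in Remarks \ref{r:CROSS} and \ref{r:CROSS-bis}.
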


This type of result is rather well-known and goes back
to~\cite{We77, DuGu75, CdV79}. We will give here a semiclassical
version of the argument presented in those references following
the presentation of~\cite{DiSj99} (chapter $15$) -- see
also~\cite{Ch80, HeRo84} for a semiclassical treatment.

The proof of Theorem \ref{t:av} will be done in three steps.

\subsubsection{Reparametrization of the classical Hamiltonian}

We have made the assumption that all the geodesics of $M$ have a common least period $l>0$. This means that, for every
$(x,\xi)$ in $\mathring{T}^*M$, one has
$\varphi^{\frac{l}{\sqrt{2E}}}(x,\xi)=(x,\xi),$ where
$E=\frac{\|\xi\|_x^2}{2}$. Following~\cite{DiSj99}, the first step
will be to ``reparametrize'' the Hamiltonian, both at the quantum
and at the classical level, in order to have a common period
$2\pi$ for the flow on $\mathring{T}^*M$. For that purpose, we set
$$\tilde{P}_0(\hbar):=\frac{l}{2\pi}\sqrt{-\hbar^2\Delta_g}.$$
The Hamiltonian corresponding to this operator is given by $\tilde{p}_0(x,\xi):=\|\xi\|_x$, and the
Hamiltonian vector fields of $p_0$ and $\tilde{p}_0$ are related
in the following way:
$$X_{\tilde{p}_0}(x,\xi)=\frac{l}{2\pi\sqrt{2E}}X_{p_0}(x,\xi),$$
where $E=\frac{\|\xi\|_x^2}{2}$. If we denote by $\psi^t$ the
Hamiltonian flow of $\tilde{p}_0$, we find that, for every
$(x,\xi)$ in $\mathring{T}^*M$ and for every $s$ in $\IR$,
one has
$$\psi^{s}(x,\xi)=\varphi^{\frac{l}{2\pi\|\xi\|_x}s}\left(
x,\xi \right), \text{ and } \psi^{2\pi}(x,\xi)=(x,\xi).$$

\subsubsection{Periodicity of the quantum propagator}

We recall that the Fourier integral operator associated to the
Hamiltonian vector field satisfies a certain periodicity property.
This follows from the periodicity of the classical flow. We will
denote by $\alpha\in\IZ$ the common Maslov index of the closed
trajectories of $\psi^t$ on the energy layers $\tilde{p}_0^{-1}\left((0,+\infty)\right).$ According to Lemma~$29.2.1$ in~\cite{Ho85IV} -- see also~\cite{CdV79} or~\cite{HeRo84}, one knows that there exists a polyhomogeneous pseudodifferential operator $A$ of order $1$ and a polyhomogeneous pseudodifferential operator $Q$ of order $-1$ (see Ch.~$18$ in~\cite{Ho85III} for the precise definitions) such that the following holds:
\begin{enumerate}
 \item $\frac{l}{2\pi}\sqrt{-\Delta_g}=A+\frac{\alpha}{4}+Q$;
 \item $[\sqrt{-\Delta_g},Q]=0$;
 \item $\text{Sp}(A)\subset\IN$.
\end{enumerate}
Translated in our semiclassical framework, we get
$$\tilde{P}_0(\hbar)=\hbar A+\frac{\alpha \hbar}{4}+\hbar Q,$$
and, in particular,
$$e^{-\frac{2i\pi}{\hbar}\left(\tilde{P}_0(\hbar)-\frac{\hbar\alpha}{4}-\hbar Q\right)}=\text{Id}_{L^2(M)}.$$

\begin{rema}\label{r:CROSS} In the case of a compact rank one symmetric space, the situation is slightly simpler using the explicit description of the
spectrum -- see paragraph $8.8$ in~\cite{Be78}. In fact, we can
write that
$$-\tilde{\Delta}_g:=-\Delta_g+\left(\frac{\pi\alpha}{2l}\right)^2\text{Id}=\left(\frac{2\pi}{l}\right)^2\left(\tilde{A}+\frac{\alpha}{4}\right)^2,$$
where $\tilde{A}$ is a polyhomogeneous pseudodifferential operator of order
one such that $\text{Sp}(\tilde{A})\subset \IN$, and then
\begin{equation}\label{e:period-cross}
e^{- \frac{i2
\pi}{\hbar}\left(\frac{l}{2\pi}\sqrt{-\hbar^2\tilde{\Delta}_g}-\frac{\hbar\alpha}{4}\right)}=e^{-i2\pi
\tilde{A}}=\text{Id}_{L^2(M)}.
\end{equation}
\end{rema}

\subsubsection{Averaging procedure}\label{sss:commute}

We will now verify that the operator $\tilde{P}_0(\hbar)$
satisfies the commutation property~\eqref{e:cm} where the operator
$\la B_{\hbar}\ra$ is defined as the quantum average of the
pseudodifferential operator $\Oph(b)$ by the Fourier integral
operator associated to $\hbar A$. This kind of
averaging procedure is standard in this context and it seems that
it first appeared in Weinstein's article~\cite{We77}.

Let $\delta<\frac{1}{2}<R$. For every symbol $b\in\ml{C}^{\infty}_c(T^{*}M)$ which
is supported in $p_0^{-1}([\delta,R])$, we define the following
averaged operator:
$$\la B_{\hbar}\ra:=\frac{1}{2\pi}\int_0^{2\pi}e^{itA}\Oph(b)e^{-itA}dt.$$
According to Egorov's Theorem, $\la B_{\hbar}\ra$ is an element in
$\Psi^{0,0}(M)$ whose principal symbol is equal to $\ml{I}_g(b)$,
which is a smooth function since $b$ is supported in
$p_0^{-1}([\delta,R])$ and therefore:
$$\ml{I}_g(b)(x,\xi) = \frac{1}{2\pi}\int_0^{2\pi}b\circ
\psi^{s}(x,\xi)ds.$$ Using the fact that $\text{Sp}(A)\subset\IN$, we start noticing that the following
commutation relation holds:
\begin{equation}\label{e:commutation}
[\la B_{\hbar}\ra,A]=0.
\end{equation} 
Let $0\leq \chi\leq 1$ be a smooth cut-off function in
$\displaystyle\ml{C}^{\infty}_c\left(\left[\frac{l}{2\pi}\sqrt{\delta},
\frac{l}{2\pi}\sqrt{4R}\right]\right)$ satisfying
$\chi\equiv 1$ in a small neighborhood of $\left[\frac{l}{2\pi}\sqrt{2\delta},
\frac{l}{2\pi}\sqrt{2R}\right]$. We can now use property~\eqref{e:commutation} in order to derive
an expression for the commutator $[\la
B_{\hbar}\ra,P_{0}(\hbar)]$. First note that:
$$[\la B_{\hbar}\ra,P_{0}(\hbar)] = [\la B_{\hbar}\ra,\chi(\tilde{P}_0(\hbar))P_{0}(\hbar)]+\ml{O}(\hbar^{\infty}).$$
Recall now that $P_0(\hbar)=\frac{(2\pi)^2}{l^2}\tilde{P}_0(\hbar)^2.$ We use the definition of $A$ in order to write:
$$\tilde{P}_0^2(\hbar)=\hbar^2A^2+\frac{\hbar^2\alpha^2}{16}+\frac{\hbar^2\alpha}{2}A+\hbar^2R_0,$$
where
\begin{equation}\label{e:qh}
Q_0:=(AQ+QA)+\frac{\alpha}{2}Q+Q^2,
\end{equation}
which also defines an operator commuting with $\tilde{P}_0(\hbar)$. Therefore, taking into account identity~\eqref{e:commutation}, we
finally obtain:
\begin{equation}\label{e:comm-bis}
[\la B_{\hbar}\ra,P_{0}(\hbar)]=\frac{2\pi^2\hbar^2}{l^2}[\la
B_{\hbar}\ra,\chi(\tilde{P}_0(\hbar))Q_0]+\ml{O}(\hbar^{\infty}).
\end{equation}
Using pseudodifferential calculus rules, one knows that $\frac{2\pi^2\hbar^2}{l^2}\chi(\tilde{P}_0(\hbar))Q_0$ is an element in $\Psi^{-\infty,0}(M)$ with principal symbol equal to $\chi(\|\xi\| l/(2\pi)) q_0(x,\xi)$ where $q_0$ is the principal symbol of the polyhomogeneous pseudodifferential operator $(AQ+QA)(2\pi)^2/l^2$ of degree $0$. Recall that it defines a smooth function on $\mathring{T}^*M$ which is $0$-homogeneous. Note that this function depends only on $(M,g)$. 
Thus, using the composition formula for
pseudodifferential operators and the Calder\'on-Vaillancourt
Theorem~\cite{Zw12}, we conclude that:
$$[\la B_{\hbar}\ra,P_{0}(\hbar)]=\ml{O}_{L^2\rightarrow L^2}(\hbar^{3}),$$
as we wanted to prove.

\begin{rema}\label{r:CROSS-bis} In the case that $(M,g)$ is a Compact Rank-One Symmetric Space, we can use Remark~\ref{r:CROSS} and mimick the above proof in order to obtain an exact commutation formula:
$$[\la B_{\hbar}\ra,P_{0}(\hbar)]=0,$$
where
 $$\la B_{\hbar}\ra:=\frac{1}{2\pi}\int_0^{2\pi}e^{ \frac{ i t}{\hbar}\left(\frac{l}{2\pi}\sqrt{-\hbar^2\tilde{\Delta}_g}-\frac{\hbar\alpha}{4}\right)}\Oph(b)
e^{- \frac{ it}{\hbar}\left(\frac{l}{2\pi}\sqrt{-\hbar^2\tilde{\Delta}_g}-\frac{\hbar\alpha}{4}\right)}dt.$$
\end{rema}

\subsection{Invariance and transport: proof of Propositions~\ref{p:CROSS},~\ref{p:invariance}, and~\ref{p:invariancesmall}}\label{ss:invariance}
We will now use Theorem \ref{t:av} to prove additional invariance
properties satisfied by every element $\mu$ in
$\ml{M}(\tau,\eps)$. Recall that, for almost every $t$ in $\IR$,
$\mu(t,\cdot)$ is a probability measure which is invariant by the
geodesic flow $\varphi^s$. We also emphasize that, for almost
every $t$ in $\IR$, one has $\mu(t)(\mathring{T}^*M)=1$ thanks to
~\eqref{e:hosc} and~\eqref{e:shosc}. These conditions imply that
for every $b\in \ml{C}^{\infty}_c(\mathring{T}^*M)$ and almost every $t$ one
has:
\begin{equation}\label{e:inv}
\mu(t)(b)=\mu(t)(\ml{I}_g(b)).
\end{equation}

Let
$\varphi_{V}^t$, $\varphi_{q_0}^t$ and $\varphi_{V+q_0}^t$ denote respectively the
Hamiltonian flows on $\mathring{T}^*M$ generated by $\ml{I}_g(V)$, $q_0$ and
$\ml{I}_g(V)+q_0$. In the case of a Compact Rank One Symmetric Space, we use the convention $q_0\equiv 0$. Note that $\varphi_{V}^t$, $\varphi_{q_0}^t$ and
$\varphi_{V+q_0}^t$ commute with the geodesic flow, since
$\{p_0,\ml{I}_g(V)\}=\{p_0,q_0\}=0 $.

We will now give the proof of Propositions~\ref{p:CROSS},~\ref{p:invariance}, and~\ref{p:invariancesmall}.
Let $\mu \in \ml{M}(\tau,\eps)$. This means that there exists a
sequence $(u_{\hbar})_{0<\hbar\leq 1}$ of normalized states in
$L^2(M)$ satisfying the frequency assumptions~\eqref{e:hosc}
and~\eqref{e:shosc} such that the Wigner distributions
$\mu_\hbar:t\mapsto w_{\hbar}(t\tau_{\hbar})$ corresponding to the
solutions $e^{-\frac{it\tau_{\hbar}P_{\eps}(\hbar)}{\hbar}}
u_{\hbar}$ converge to $\mu$ -- see appendix~\ref{a:measures}.

Assume moreover that $(u_{\hbar})_{0<\hbar\leq 1}$ has a
semiclassical measure $\mu_0$, meaning that the Wigner
distributions $w_{\hbar}(0)$ weak-$\star$ converge towards $\mu_0$
as $\hbar\rightarrow 0^+$. Let $b$ be a smooth compactly supported function on
$\mathring{T}^{*}M$. 
\begin{rema}
 We note that, as $\mu(t)(M\times\{0\})=0$ a.e., we can restrict ourselves to proving the invariance properties for such functions $b$.
\end{rema}
Let $\la
B_{\hbar}\ra$ be the operator obtained from $b$ using Theorem
\ref{t:av}. One clearly has:
\begin{equation}\label{e:wigeq}
\frac{d}{dt}\left\la
u_{\hbar},e^{\frac{it\tau_{\hbar}}{\hbar}P_{\eps}(\hbar)} \la
B_{\hbar}\ra e^{-\frac{it\tau_{\hbar}}{\hbar}P_{\eps}(\hbar)}
u_{\hbar}\right\ra = \frac{i\tau_{\hbar}}{\hbar}\left\la
u_{\hbar},e^{\frac{it\tau_{\hbar}}{\hbar}P_{\eps}(\hbar)}
[P_{\eps}(\hbar),\la B_{\hbar}\ra]
e^{-\frac{it\tau_{\hbar}}{\hbar}P_{\eps}(\hbar)}
u_{\hbar}\right\ra.
\end{equation}
Recall that, by Theorem \ref{t:av} and by the composition rules for pseudodifferential operators, one has:
$$\frac{\tau_{\hbar}}{\hbar}[P_{\eps}(\hbar),\la B_{\hbar}\ra] = \ml{O}_{L^2 \rightarrow
L^2}(\tau_{\hbar}\hbar^{2}+\tau_{\hbar}\eps_{\hbar}^2).$$
In the case of a CROSS, we also note that the remainder is in fact of order $\ml{O}_{L^2 \rightarrow
L^2}(\tau_{\hbar}\eps_{\hbar}^2)$. If $\tau_{\hbar}\eps_{\hbar}^2\rightarrow 0^+$ (and $\tau_{\hbar}\hbar^2\rightarrow 0^+$ when $(M,g)$ is not a CROSS), after integrating
both sides of identity \eqref{e:wigeq} on the interval $t\in
[0,\tau]$ and taking limits as $\hbar\rightarrow 0^+$, one has for almost
every $\tau\in\IR$:
$$\mu(\tau)(b)=\mu(\tau)(\ml{I}_g(b))=\mu_0(\ml{I}_g(b)),$$
which, in view of \eqref{e:inv} concludes the proof of i) of the three Propositions.

We now turn to the proof of ii) of these different Propositions. Equation \eqref{e:comm-bis} implies that the commutator takes the form:
$$\frac{\tau_{\hbar}}{\hbar}[P_{\eps}(\hbar),\la B_{\hbar}\ra] = \tau_\hbar\hbar[\Oph(q_0),\la B_{\hbar}\ra]+\ml{O}_{L^2\rightarrow L^2}(\tau_{\hbar}\hbar^3)+\frac{\tau_{\hbar}\eps_{\hbar}^2}{\hbar}[V,\la B_{\hbar}\ra], $$
where $q_0$ was defined in paragraph~\ref{sss:commute} -- see also remark~\ref{r:zelditch} below. Integrating again
\eqref{e:wigeq} with respect to $t$, letting $\hbar\rightarrow
0^+$ and using the composition formula for pseudodifferential
operators gives that, for every $\tau\in\IR$ the following holds
\begin{equation}\label{e:trans}
\mu(\tau)(\ml{I}_g(b))-\mu_0(\ml{I}_g(b))=\int_0^{\tau}\mu(t)(\{L,\ml{I}_g(b)\})dt,
\end{equation}
where
\[ L:=
\left\{
\begin{array}[c]{lll}
q_0 & \text{if } \eps_{\hbar}\hbar^{-1}\rightarrow 0^+,\medskip \\
q_0+V & \text{if } \eps_{\hbar} = \hbar,\medskip \\
V & \text{if } \eps_{\hbar}\hbar^{-1}\rightarrow +\infty,
\end{array}
\right.
\]
In the case of a CROSS, we let $L:=V$ without any assumptions on the size of the perturbations. The geodesic flow preserves the symplectic form on $T^*M$,
therefore:
\begin{equation}\label{e:poiss}
\{\ml{I}_g(L),\ml{I}_g(b)\}=\ml{I}_g(\{\ml{I}_g(L),b\})=\ml{I}_g(\{L,\ml{I}_g(b)\}).
\end{equation}
In particular, combining this equality to~\eqref{e:inv}, we can rewrite~\eqref{e:trans} as
$$\forall\tau\in\IR,\ \mu(\tau)(\ml{I}_g(b))-\mu_0(\ml{I}_g(b))=\int_0^{\tau}\mu(t)(\{\ml{I}_g(L),\ml{I}_g(b)\})dt,$$
 Therefore, using again~\eqref{e:poiss}, one has $\frac{d}{d\tau}\mu(\tau)(\ml{I}_g(b\circ\varphi_L^{-\tau}))=0$, and thus
$$\mu(\tau)(\ml{I}_g(b)) = \mu_0(\ml{I}_g(b\circ\varphi^\tau_L))=\mu_0(\ml{I}_g(b)\circ\varphi^\tau_L), $$
where $\varphi^\tau_L$ is the Hamiltonian flow of $\ml{I}_g(L)$.
This completes the proof of ii) of Propositions~\ref{p:CROSS},~\ref{p:invariance}, and~\ref{p:invariancesmall}.

We consider now the large time regime, meaning $\tau_{\hbar}\eps_{\hbar}^2\rightarrow +\infty$ (in the case of Propositions~\ref{p:CROSS} and~\ref{p:invariance}) or $\tau_{\hbar}\hbar^2\rightarrow +\infty$ (in the case of Proposition~\ref{p:invariancesmall}). Let
$\theta$ be an element in $\ml{C}^1_c(\IR)$. We use an integration
by parts on \eqref{e:wigeq} to derive:
\begin{align*}
\int_{\IR}\theta'(t) & \left\la
u_{\hbar},e^{\frac{it\tau_{\hbar}P_{\eps}(\hbar)}{\hbar}} \la
B_{\hbar}\ra e^{-\frac{it\tau_{\hbar}P_{\eps}(\hbar)}{\hbar}}
u_{\hbar} \right\ra  dt=\\
&-\frac{i\tau_{\hbar}}{\hbar}\int_{\IR}\theta(t)\left\la
u_{\hbar},e^{\frac{it\tau_{\hbar}P_{\eps}(\hbar)}{\hbar}}
[P_{\eps}(\hbar),\la B_{\hbar}\ra]
e^{-\frac{it\tau_{\hbar}P_{\eps}(\hbar)}{\hbar}}
u_{\hbar}\right\ra dt.
\end{align*}
Thanks to relation~\eqref{e:comm-bis}, we deduce that
$$\frac{i}{\hbar}\int_{\IR}\theta(t)\left\la u_{\hbar},e^{\frac{it\tau_{\hbar}P_{\eps}(\hbar)}{\hbar}}
\left[\hbar^2\Oph(q_0)+\eps_{\hbar}^2V,\la B_{\hbar}\ra\right]
e^{-\frac{it\tau_{\hbar}P_{\eps}(\hbar)}{\hbar}}
u_{\hbar}\right\ra dt=\ml{O}(\tau_{\hbar}^{-1})+\ml{O}(\hbar^3).$$ 
Again, in the case of a CROSS, the remainder is only $\ml{O}(\tau_{\hbar}^{-1})$. Using the
composition formula for pseudodifferential operators, we find that
\begin{equation}\label{e:hinv}
\int_{\IR}\theta(t) \la
w_{\hbar}(t\tau_{\hbar}),\{\hbar^2q_0+\eps_{\hbar}^2V,\ml{I}_g(b)\}\ra
dt=\ml{O}(\tau_{\hbar}^{-1})+\ml{O}(\hbar^3).
\end{equation}
Taking limits as $\hbar \rightarrow 0^+$ and using identities
\eqref{e:inv}, \eqref{e:poiss} shows that
\[
\int_{\IR}\theta(t)\mu(t)(\{\ml{I}_g(L),\ml{I}_g(b)\})dt=0.
\]
This concludes the proof of part iii) in each case.

\begin{rema}\label{r:zelditch} Again, we would like to stress the fact that the symbol $q_0$ is only related to the choice of the metric on $M$. In order to get an (implicit) expression for $q_0$, we have to write that
$$0=[\la B_{\hbar}\ra,\hbar^2A^2]=[\la B_{\hbar}\ra,\tilde{P}_0(\hbar)^2-\hbar^2\Oph(q_0)]+\ml{O}_{L^2\rightarrow L^2}(\hbar^4).$$
Using energy cut-offs, this can be in fact rewritten as
$$\left[\la B_{\hbar}\ra,\Oph(q_0)\right]=
\frac{l^2}{2\pi^2\hbar^2}\left[\la
B_{\hbar}\ra,P_0(\hbar)\right]+\ml{O}_{L^2\rightarrow
L^2}(\hbar^2).$$
From this expression, we can get an expression for $\{\ml{I}_g(b),q_0\}$ by identifying the principal symbol of the right hand side which turns out to be a delicate task~\cite{Ze96, Ze97}. We will discuss this in more details in paragraph~\ref{ss:revolution} in the case of metrics of revolution on $\IS^2$.\\

\end{rema}

\subsection{Proof of Theorem~\ref{t:theo-zoll}}\label{ss:proof-theo-zoll}

Theorem \ref{t:theo-zoll} is now an easy consequence of Proposition \ref{p:invariance}. We will in fact prove something slightly more precise that works for any size of $\eps_{\hbar}$. We use the above convention for defining the function $L$. Recall that $L$ is a smooth function on $\mathring{T}^*M$ which is \emph{$0$-homogeneous} and that, in the case of a compact rank-one symmetric space, one has always $L= V\circ\pi$.

Let $\mu \in \ml{M}(\tau,\eps)$. Suppose $\tau_{\hbar}\eps_{\hbar}^2\rightarrow +\infty$, or $\tau_{\hbar}\hbar^2\rightarrow +\infty$ when $\eps_{\hbar}\ll\hbar$ (if $(M,g)$ is not a CROSS). Then, Propositions~\ref{p:CROSS},~\ref{p:invariance} and~\ref{p:invariancesmall} assert that $\mu(t,\cdot)$ is invariant
with respect to the flow $\varphi^{\tau}_L$ (the Hamiltonian flow
of $\ml{I}_g(L)$) for a.e. $t \in \IR$. 

First, we consider
$\Gamma_0\subset S^*M$ a closed orbit of $\varphi^s$ that is not included in 
$$\text{Crit}(L):=\left\{\rho\in\mathring{T}^*M : d_\rho\ml{I}_{g}(L)=0\right\}.$$ 
The fact that $\Gamma_0\not\subset\text{Crit}(L)$ is equivalent to the fact that the set $\Gamma_0$ is not invariant by $\varphi^{\tau}_L$ -- see Lemma \ref{l:inv} for a proof of this fact. For a given closed orbit $\Gamma_0\subset S^*M$ and for a fixed $\lambda>0$, we now define
$$\Gamma_0(\lambda):=\left\{(x,\lambda\xi):(x,\xi)\in\Gamma_0\right\}.$$
We fix a compact interval $[c_1,c_2]\subset(0,+\infty)$ and $\lambda\in[c_1,c_2]$. Since $\varphi^{\tau}_L$ commutes with the geodesic flow, it
follows that all of the sets $\varphi^{\tau}_L(\Gamma_0(\lambda))$
are distinct orbits of $\varphi^\tau$ (at least for $\tau$ small
enough). On the other hand, the invariance of $\mu(t,\cdot)$
implies that, for a.e. $t$ in $\IR$,
$$\mu(t)\left(\bigcup_{\lambda\in[c_1,c_2]}\varphi^{\tau}_L(\Gamma_0(\lambda))\right) =
\mu(t)\left(\bigcup_{\lambda\in[c_1,c_2]}\Gamma_0(\lambda)\right)$$ for every $\tau$ in some interval with
non-empty interior. Since $\mu(t,\cdot)$ is finite, one must have
\begin{equation}\label{e:lift-compact}\mu(t)\left(\bigcup_{\lambda\in[c_1,c_2]}\Gamma_0(\lambda)\right)=0,\end{equation}
for a.e. $t$ in $\IR$.

Consider now the projection $\nu(t,\cdot)$ of $\mu(t,\cdot)$ on $M$. Suppose $\Gamma_0\subset S^*M$ is a closed orbit of $\varphi^s$ and denote by $\gamma_0:=\pi(\Gamma_0)$ where $\pi:T^*M\rightarrow M$ is the canonical projection. Suppose that $\Gamma_0$ is not contained in $\text{Crit}(L)$. Using~\eqref{e:lift-compact}, we note that
$$\nu(t)(\gamma_0)=\mu(t)\left(\bigcup_{\lambda\in(0,\infty)}\left\{(x,\xi)\in\pi^{-1}(\gamma_0): \|\xi\|=\lambda\ \text{and}\ (x,\xi)\notin\Gamma_0(\lambda) \right\}\right).$$
Using invariance by the geodesic flow and the fact that the measure is finite, a similar argument as above allows to conclude that this quantity is in fact equal to $0$.

\subsection{Explicit expression of $q_0$ on $\IS^2$}\label{ss:revolution}

In this last section, we briefly recall a result due to Zelditch which gives a more or less explicit expression for $q_0$ -- see Theorem~$3$ in~\cite{Ze97} for more details. We suppose that $g$ is a $C_{2\pi}$-metric on $\mathbb{S}^2$; in this case, every geodesic is a simple curve, see \cite{GG81}. We fix $\Gamma_0$ a closed geodesic on $S^*M$ issued from the point $(x_0,\xi_0)$, i.e.
$$\Gamma_0:=\left\{\varphi^s(x_0,\xi_0)=(x(s),\xi(s):s\in\IR\right\}.$$
We denote by $\tilde{f}_{x_0,\xi_0}(t)$ the unique solution of the ordinary differential equation
$$f''(t)+K(x(t))f(t)=0,\ f(0)=1,\ f'(0)=0,$$
and by $f_{x_0,\xi_0}(t)$ the unique solution of the ordinary differential equation
$$f''(t)+K(x(t))f(t)=0,\ f(0)=0,\ f'(0)=1,$$
where $K$ is the scalar curvature. Using the conventions of Appendix \ref{s:geom-back}, we also define
$$\ml{K}(x,\xi):=g_x^*(d_xK,\xi^{\perp}).$$
The formula obtained by Zelditch can then be expressed as follows:
$$q_0(x_0,\xi_0):=\frac{1}{8\pi}\int_0^{2\pi}K(x(t))dt+\frac{1}{24\pi}\int_0^{2\pi}\ml{K}(x(t),\xi(t))\tilde{f}_{x_0,\xi_0}(t)^2R_{x_0,\xi_0}(t)dt.$$
where
\begin{align*}
R_{x_0,\xi_0}(t)&:=\tilde{f}_{x_0,\xi_0}(t)\int_0^t\ml{K}(x(s),\xi(s))f_{x_0,\xi_0}(s)^3ds\\&-3f_{x_0,\xi_0}(t)\int_0^t\tilde{f}_{x_0,\xi_0}(s)\ml{K}(x(s),\xi(s))f_{x_0,\xi_0}(s)^2ds.
\end{align*}
We proceed as in~\cite{Ze97} and specialize this expression to the case of $C_{2\pi}$-metrics of revolution on $\IS^2$. According to~\cite{Be78} -- Theorem~$4.13$, such a metric can be written in spherical coordinates as:
$$g=(1+\sigma(\cos\theta))^2d\theta^2+\sin^2\theta d\phi^2,$$
where $\sigma$ is a smooth odd function on $[-1,1]$ satisfying $\sigma(1)=0$. Recall from Paragraph~$4.17$ in~\cite{Be78} that the sectional curvature can be expressed as follows:
$$K(\theta)=\frac{1}{(1+\sigma(\cos\theta))^3}\left(1+\sigma(\cos\theta)-\cos\theta\sigma'(\cos\theta)\right).$$
Let $\pi(\Gamma_0)$ be the closed geodesic corresponding to $\theta=\frac{\pi}{2}$. We note that the curvature is constant and equal to $1$ on this closed geodesic. For every $(x_0,\xi_0)$ in $\Gamma_0$, we find then
$$q_0(x_0,\xi_0):=\frac{1}{4}+\frac{3^2\sigma'(0)^3}{4\pi}\int_0^{\pi}\left(\frac{\cos^6\phi}{3}-\cos^4\phi-\cos^2\phi\sin^4\phi\right)d\phi=\frac{1}{4}-\frac{3\sigma'(0)^3}{4},$$
In particular, $q_0$ is not equal to $\frac{1}{4}$ on this closed geodesic if $\sigma'(0)\neq 0$. This condition is of course generic among the possible choice of $\sigma$. Using the symmetry of revolution and the Gauss-Bonnet formula, one verifies that, for any $(x_0,\xi_0)$ belonging to a geodesic orthogonal to the geodesic $\theta=\frac{\pi}{2}$, one has $q_0(x_0,\xi_0)=\frac{1}{4}$. In particular, $q_0$ is not a constant function on $T^*\IS^2$. Combining this observation to the results of paragraph~\ref{ss:proof-theo-zoll}, we deduce Theorem~\ref{t:revolution} -- see Remark~\ref{r:eigen}.

\section{Measures with two invariance properties}\label{s:invariance}

This section is of more geometric flavor; it is devoted to analyzing the structure of measures on the cotangent bundle that are invariant by two flows that commute.

\subsection{Measures that are invariant by two Hamiltonians that commute}
As before, we write $p_0(x,\xi):=\frac{1}{2}||\xi||_x^2$ and $\varphi^s$ denotes the geodesic flow acting on $T^*M$.

Let $L$ be a smooth function on $\mathring{T}^*M$ satisfying:
\begin{itemize}
\item $L$ is $0$-homogeneous in the $\xi$ variable, i.e.
$L(x,\xi)=L(x,\lambda\xi)$ for every $\lambda>0$.
\item The one-form $dL$ does not vanish identically. 
\item $L$ Poisson-commutes with $p_0$: $\{L,p_0\}=0$.
\end{itemize}

We shall denote by $X_0$ and $X_L$ the respective Hamiltonian vector fields of $p_0$ and $L$; let $\varphi^{\tau}_{L}$ be the flow of $X_L$. Then the flows $\varphi^{\tau}_{L}$, and $\varphi^s$ commute. 

The purpose of this section is to describe the set of probability measures on $\mathring{T}^*M$ that are simultaneously invariant by the the flow $\varphi^{\tau}_{L}$ and by the geodesic flow $\varphi^s$. We are especially interested in the case when $d=2$ and $M$ is orientable,

We introduce the set $\text{Crit}(L)$ of critical points of $L$ in $\mathring{T}^*M$:
\[
\text{Crit}(L):= \left\{ \rho \in \mathring{T}^{*}M :  d_\rho L= 0 \right\}. 
\]
This is a closed set in $\mathring{T}^*M$ that is invariant by both flows $\varphi^s, \varphi^{\tau}_{L}$. More precisely, the following holds.
\begin{lemm}\label{l:inv}
Under the above assumptions, $\operatorname{Crit}(L)$ is formed by those orbits of the geodesic flow that are invariant by $\varphi^{\tau}_{L}$.
\end{lemm}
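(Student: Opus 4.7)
The plan is as follows. First, observe that the non-degeneracy of the symplectic form $\omega$ on $T^*M$ together with the defining identity $\iota_{X_L}\omega = dL$ shows that $X_L(\rho) = 0$ if and only if $d_\rho L = 0$. Hence $\operatorname{Crit}(L)$ coincides with the fixed-point set of the flow $\varphi^\tau_L$. Moreover, the hypothesis $\{L,p_0\}=0$ is equivalent to $[X_L,X_0]=0$, which translates to the commutativity of the flows $\varphi^s$ and $\varphi^\tau_L$. The lemma then amounts to the two opposite inclusions.

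The easy inclusion is that every $\varphi^s$-orbit meeting $\operatorname{Crit}(L)$ lies entirely in $\operatorname{Crit}(L)$ and is $\varphi^\tau_L$-invariant. Indeed, for $\rho_0 \in \operatorname{Crit}(L)$ and any $s,\tau\in\IR$, commutativity gives
\[
\varphi^\tau_L(\varphi^s(\rho_0)) = \varphi^s(\varphi^\tau_L(\rho_0)) = \varphi^s(\rho_0),
\]
so the whole geodesic orbit through $\rho_0$ is pointwise fixed by $\varphi^\tau_L$, hence in particular globally invariant and contained in $\operatorname{Crit}(L)$.

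The main work is the converse. Let $\Gamma = \{\varphi^s(\rho_0) : s\in\IR\}$ be a geodesic orbit that is $\varphi^\tau_L$-invariant. For each $\rho\in\Gamma$ the curve $\tau \mapsto \varphi^\tau_L(\rho)$ lies in $\Gamma$, so differentiating at $\tau=0$ yields $X_L(\rho)\in T_\rho\Gamma = \IR\, X_0(\rho)$; since $X_0(\rho)\neq 0$ on $\mathring{T}^*M$, there exists a unique $c=c(\rho)\in\IR$ with $X_L(\rho) = c\, X_0(\rho)$. To show $c=0$ I would introduce the Liouville radial vector field $\Xi$ on the fibres of $T^*M$ (in local coordinates $\Xi = \sum_i \xi_i\,\partial_{\xi_i}$) and use Euler's identity: since $L$ is $0$-homogeneous and $p_0$ is $2$-homogeneous in $\xi$, one has $dL(\Xi) = 0$ and $dp_0(\Xi) = 2 p_0$. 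Pairing the equality $X_L(\rho) = c\, X_0(\rho)$ with $\Xi(\rho)$ through $\omega$ and using $\iota_{X_H}\omega = dH$ for $H = L$ and $H=p_0$ then gives
\[
0 = dL(\Xi)|_\rho = \omega_\rho(X_L(\rho),\Xi(\rho)) = c\,\omega_\rho(X_0(\rho),\Xi(\rho)) = c\, dp_0(\Xi)|_\rho = 2\, c\, p_0(\rho).
\]
Since $p_0(\rho)>0$ on $\mathring{T}^*M$, this forces $c=0$, hence $X_L(\rho)=0$ and $\rho\in\operatorname{Crit}(L)$, so $\Gamma\subset\operatorname{Crit}(L)$.

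The only genuine obstacle is this second inclusion: the point is to exploit the $0$-homogeneity of $L$ by means of the Liouville vector field, which provides a scalar test distinguishing $X_L$ (with vanishing contraction against $\Xi$) from $X_0$ (with contraction equal to $2 p_0 > 0$ on $\mathring{T}^*M$). Note that the standing assumption $dL\not\equiv 0$ plays no role in the argument; only the homogeneity and the Poisson-commutativity with $p_0$ are used.
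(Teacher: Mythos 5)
Your proof is correct and follows the same overall strategy as the paper: the easy inclusion via commutativity of the two flows, and the converse by noting that invariance of a geodesic orbit forces $X_L=c\,X_0$ along it and then using the $0$-homogeneity of $L$ to kill the constant $c$. The only real difference is how that last step is implemented: the paper invokes orthogonality of $X_L$ and $X_0$ with respect to the Sasaki metric (relying on the horizontal/vertical splitting set up in its Appendix C), whereas you pair the identity $X_L=c\,X_0$ against the Liouville vector field through the symplectic form and use Euler's identities $\Xi L=0$, $\Xi p_0=2p_0$. Both mechanisms are encodings of the same homogeneity fact, but your version is more self-contained, needing only the symplectic structure and Euler's relation rather than the Sasaki-metric formalism; the paper's phrasing has the advantage of reusing the frame and orthogonality statements it needs anyway for Lemma~\ref{l:caus}. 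Your closing observation that the hypothesis $dL\not\equiv 0$ is not used is also accurate; it only serves to make the statement non-vacuous.
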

\begin{proof}
Since $\varphi^t$ and $\varphi^{s}_L$ commute, $\text{Crit}(L)$ is formed by orbits of the geodesic flow that are fixed by $\varphi^{s}_L$. On the other hand, if the orbit of the geodesic flow issued from some $\rho_0$ is invariant by $\varphi^{s}_L$ then necessarily $X_L(\rho_0)=\alpha X_0(\rho_0)$ for some $\alpha\in\IR$. This forces  $X_L(\rho)=\alpha X_0(\rho)$ for every $\rho$ in the orbit. Since $L$ is zero-homogeneous with respect to $\xi$ it turns out that $X_L$ and $X_0$ are orthogonal with respect to the Sasaki metric (see Appendix C). Therefore, $\alpha = 0$, as we wanted to prove.
\end{proof}

Consider now the set:
$$\ml{R}(L):=\mathring{T}^{*}M \setminus \text{Crit}(L).$$
This is an open subset of $\mathring{T}^{*}M$ that is invariant by $\varphi^s$ and $\varphi^{\tau}_{L}$. 
The map
$$\phi:\IR^2 \times  \ml{R}(L) \ni (s,t,\rho) \longmapsto \varphi^s \circ \varphi_L^t(\rho) \in \ml{R}(L),$$
is a group action of $\IR^2$ on $\ml{R}(L)$. Moreover, for any $\rho_0\in \ml{R}(L) $ the map:
$$\phi_{\rho_0}:\IR^2\ni (s,t) \longmapsto \varphi^s \circ \varphi_L^t(\rho_0) \in \ml{R}(L), $$
is an immersion. Therefore, the stabilizer group $G_{\rho_0}$ of $\rho_0$ under $\phi$ is discrete. This proves that the orbits of the action $\phi$ are either diffeomorphic to the torus $\IT^2$, to the cylinder $\IT\times \IR$ or to $\IR^2$.

The moment map:
$$\Phi : \ml{R}(L) \ni (x,\xi) \longmapsto (p_0(x,\xi), L(x,\xi)) \in \IR^2,$$
is a submersion, and for every $(E,J)\in\Phi(\ml{R}(L))$ the level set 
$$\Lambda_{(E,J)}:=\Phi^{-1}(E,J),$$
is a smooth submanifold of $\ml{R}(L)$ of codimension two. Note that the $0$-homogeneity of $L$ implies that $\Lambda_{(\alpha E,J)}=h_\alpha(\Lambda_{(E,J)})$ for every $\alpha>0$, and $h_\alpha:\mathring{T}^*M \longrightarrow \mathring{T}^*M$ being the homothety of ratio $\alpha$ on the fibers.

When $d=2$ the couple $p_0, L$ forms a completely integrable system on $\ml{R}(L)$; and the map $\phi_{\rho_0}$ induces a diffeomorphism:
$$\phi_{\rho_0}:\IR^2/G_{\rho_0}\longrightarrow \Lambda_{(E_0,J_0)}^{\rho_0},\quad \text{ for } (E_0,J_0):=\Phi(\rho_0);$$ 
Above, $\Lambda_{(E_0,J_0)}^{\rho_0}$ denotes the connected component of $\Lambda_{(E_0,J_0)}$ that contains $\rho_0$.
Therefore, if $\Lambda_{(E_0,J_0)}^{\rho_0}$ is compact then it is an embedded Lagrangian torus in $T^*M$. In that case, we shall write $\IT_{\rho_0}^2:=\IR^2 / G_{\rho_0}$.
See \cite{Duis80, MZ05} for more detailed proofs. 

Our next result clarifies the structure of the set of probability measures on $\ml{R}(L)$ that are invariant by the geodesic flow and the flow $\varphi_L^\tau$. Let us introduce first some notation: $\ml{R}_c(L)$ will be the set formed by those $\rho\in\ml{R}(L)$ such that $\Lambda_{\Phi(\rho)}^\rho$ is compact. 
\begin{prop}\label{p:biinv}
Suppose that $d=2$. Let $\mu$ be a probability measure on $\ml{R}(L)$ that is invariant by $\varphi^s$ and $\varphi_L^\tau$ and set $\overline{\mu}:=\Phi_*\mu$. Then, for every $b\in\ml{C}_c(\ml{R}(L))$, one has
$$ \int_{\ml{R}(L)}b(x,\xi)\mu(dx,d\xi)=\int_{\Phi(\ml{R}(L))}\int_{\Lambda_{(E,J)}}b(x,\xi)\lambda_{E,J}(dx,d\xi)\overline{\mu}(dE,dJ),$$
where, for $(E,J)\in\Phi(\ml{R}(L))$, the measure $\lambda_{E,J}$ is a convex combination of the (normalized) Haar measures on the tori $\Lambda_{(E,J)}^{\rho}$ for $\rho\in\Lambda_{(E,J)}\cap\ml{R}_c(L)$ (see equation \eqref{e:haar}).
\end{prop}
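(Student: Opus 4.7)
The plan is to disintegrate $\mu$ along the moment map $\Phi=(p_0,L)$ and then classify the resulting conditional measures using the transitive $\IR^2$-action $\phi$ on each fiber. First I would apply the disintegration theorem to $\Phi:\ml{R}(L)\to\Phi(\ml{R}(L))\subset\IR^2$ to write
$$\mu=\int_{\Phi(\ml{R}(L))}\mu_{E,J}\,d\overline{\mu}(E,J),$$
with $\mu_{E,J}$ a probability measure supported on the fiber $\Lambda_{(E,J)}$. Since $\{p_0,L\}=0$, both flows $\varphi^s$ and $\varphi_L^\tau$ preserve every fiber of $\Phi$; combined with the $(\varphi^s,\varphi_L^\tau)$-invariance of $\mu$ and the $\overline{\mu}$-a.e.\ uniqueness of the disintegration, this forces $\mu_{E,J}$ to be invariant under both flows for $\overline{\mu}$-a.e.\ $(E,J)$.

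Next I would invoke the structure recalled earlier in the section: because $d=2$, $\Phi$ is a submersion, and the connected components of each level set $\Lambda_{(E,J)}$ are precisely the orbits of the $\IR^2$-action $\phi$. Each such orbit has the form $\phi_\rho(\IR^2/G_\rho)$ with $G_\rho\subset\IR^2$ a discrete subgroup, hence is diffeomorphic to $\IT^2$, $\IT\times\IR$, or $\IR^2$, the first case occurring exactly when $\rho\in\ml{R}_c(L)$.

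The main analytic step is to show that $\mu_{E,J}$ charges only the compact orbits. On a single orbit $O_\rho$, the pullback of $\mu_{E,J}|_{O_\rho}$ to $\IR^2/G_\rho$ is a translation-invariant Borel measure and is therefore proportional to Haar measure by the uniqueness of Haar on a locally compact abelian group. When the rank of $G_\rho$ is strictly less than $2$, Haar is infinite, so finiteness of $\mu_{E,J}$ forces the proportionality constant to vanish. On each compact orbit $\Lambda_{(E,J)}^\rho$ the only invariant probability measure is the normalized Haar measure, call it $\operatorname{Haar}_\rho$; since every fiber has at most countably many connected components, I obtain
$$\mu_{E,J}=\sum_k c_k(E,J)\,\operatorname{Haar}_{\rho_k(E,J)},\qquad c_k(E,J)\geq 0,\quad\sum_k c_k(E,J)=1,$$
which is exactly the announced $\lambda_{E,J}$.

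The principal difficulty I foresee is the measurable-selection question, namely constructing a Borel enumeration $\rho_k(E,J)$ of the compact orbits inside each fiber together with measurable weights $c_k(E,J)$. The cleanest route is to exploit the action-angle coordinates that the completely integrable pair $(p_0,L)$ provides on a neighborhood of each connected component of $\ml{R}_c(L)$, turning the torus fibration locally into a trivial bundle and reducing measurability to a standard fact. The invariance transfer from $\mu$ to $\mu_{E,J}$ also requires a brief justification via the essential uniqueness of disintegrations under push-forward by a $\overline{\mu}$-preserving transformation, but this is routine.
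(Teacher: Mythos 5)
Your proposal is correct and follows essentially the same route as the paper: disintegrate $\mu$ over the moment map $\Phi$, transfer the bi-invariance to the conditional measures $\mu_{E,J}$, identify their restrictions to each connected component (an orbit of the $\IR^2$-action, carrying a group structure via $\phi_\rho$) with a multiple of Haar measure, and discard the non-compact components because their Haar measure is infinite while $\mu_{E,J}$ is finite. The extra points you flag (essential uniqueness of the disintegration under the flows, countability of components, measurable selection of the weights) are routine refinements that the paper leaves implicit.
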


\begin{proof}
The disintegration Theorem (see e.g. Th.~$5.14$ in~\cite{EW11}) gives, for $\overline{\mu}$-a.e.~$(E,J)\in \ml{R}(L)$, the existence of a family of probability measures $\mu_{E,J}$ concentrated on $\Lambda_{(E,J)}$ such that:
$$ \int_{\ml{R}(L)}b(x,\xi)\mu(dx,d\xi)=\int_{\Phi(\ml{R}(L))}\int_{\Lambda_{(E,J)}}b(x,\xi)\mu_{E,J}(dx,d\xi)\overline{\mu}(dE,dJ),$$
for every $b\in\ml{C}_c(\ml{R}(L))$. The measures $\mu_{E,J}$ inherit the same invariance properties as $\mu$. In particular,
$$\int_{\Lambda_{(E,J)}}b(x,\xi)\mu_{E,J}(dx,d\xi)=\int_{\Lambda_{(E,J)}}b(\phi(s,t,x,\xi))\mu_{E,J}(dx,d\xi)$$
for every $(s,t)\in\IR^2$. Each connected component of the manifold $\Lambda_{(E,J)}$ has a group structure induced by the map $\phi_\rho$, for any $\rho\in \Lambda_{(E,J)}$. The invariance property of $\mu_{E,J}|_{\Lambda_{(E,J)}^{\rho}}$ is equivalent to stating that it is invariant by translations in the group. Therefore, $\mu_{E,J}|_{\Lambda_{(E,J)}^\rho}$ must coincide with a multiple of the Haar measure on $\Lambda_{(E,J)}^\rho$ for every $\rho\in\Lambda_{(E,J)}$. If $\Lambda_{(E,J)}^\rho$ is non compact, this measure is infinite and $\mu_{E,J}(\Lambda_{(H,J)}^\rho)=0$. 
\end{proof}
An explicit formula for the restriction of the measure $\lambda_{E,J}$ to a connected component $\Lambda_{(E,J)}^\rho$ with $\rho\in\ml{R}_c(L)\cap\Lambda_{(E,J)}$ is the following:
\begin{equation}\label{e:haar}
\int_{\Lambda_{(E,J)}^\rho}b(x,\xi)\lambda_{E,J}(dx,d\xi)=c\int_{\IT_{\rho_0}^2}b(\phi_\rho(s,t))dsdt,
\end{equation}
for some constant $c\in [0,1]$. Proposition \ref{p:biinv} merely states that bi-invariant measures a superpositions of measures of this form.

Note that so far we have not used the fact that the geodesic flow is periodic. When this is the case it turns out that for every $\rho_0\in\ml{R}_c (L)$ the stabilizer group $G_{\rho_0}$ of $\rho_0$ contains an element of the form $(q,0)$ for some $q\in\IR\setminus\{0\}$.

\subsection{Structure of the projection onto the base}
Our next goal is to study the regularity properties of the projection of a bi-invariant measure $\mu$ onto the base manifold $M$. We are going to prove that they decompose as an measure that is absolutely continuous with respect to the Riemannian volume plus a singular measure supported on the projection of the critical set $\text{Crit}(L)$.

\begin{theo}\label{t:abscon}
Suppose that $M$ is an orientable $P_l$-surface. Let $\mu$ be a probability measure on $\ml{R}(L)$ that is invariant by $\varphi^s$ and $\varphi_L^\tau$. Then $\nu:=\pi_*\mu$ is a probability measure on $M$ that is absolutely continuous with respect to the Riemannian measure. 
\end{theo}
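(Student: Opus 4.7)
The strategy is to use the disintegration furnished by Proposition~\ref{p:biinv} to reduce the claim to the following statement: for every $\rho_0\in\ml{R}_c(L)$, the pushforward by $\pi$ of the normalized Haar measure on the compact invariant torus $\Lambda:=\Lambda_{\Phi(\rho_0)}^{\rho_0}$ is absolutely continuous with respect to $\operatorname{vol}_g$. Once this is granted, convex combinations and integration over $(E,J)$ against $\overline{\mu}$ preserve absolute continuity, giving the conclusion for $\nu=\pi_*\mu$. To analyze a single torus, I parametrize it via $\phi(s,t):=\varphi^s(\varphi_L^t(\rho_0))$, which descends to a diffeomorphism $\IR^2/G_{\rho_0}\to\Lambda$ pulling Haar back to a multiple of Lebesgue measure $ds\,dt$. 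The horizontal parts of the two Hamiltonian vector fields are $d\pi(X_0)=\xi^{\sharp}$ and $d\pi(X_L)=\partial_\xi L\in T_xM$, and Euler's relation $\xi(\partial_\xi L)=0$ (coming from the $0$-homogeneity of $L$) makes these two vectors $g$-orthogonal, as in the Sasaki-metric computation of Appendix~C. Hence the Jacobian determinant of $\pi\circ\phi$ at $(s,t)$ equals $|\xi|_{g^*}|\partial_\xi L|_g$ evaluated at $\phi(s,t)$, and since $|\xi|_{g^*}>0$ on $\mathring{T}^*M$, the critical set of $\pi\circ\phi$ is exactly
\[
\tilde\Sigma:=\{(s,t)\in\IR^2/G_{\rho_0}:\partial_\xi L(\phi(s,t))=0\}.
\]

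The heart of the argument is to show that $\operatorname{Leb}(\tilde\Sigma)=0$. Writing $F(s,t):=\partial_\xi L(\phi(s,t))$ in a local chart, one has $\partial_s F_i=\{p_0,\partial_{\xi_i}L\}\circ\phi(s,t)$. Differentiating the identity $\{p_0,L\}=0$ with respect to $\xi_i$ gives the Leibniz relation $\{\partial_{\xi_i}p_0,L\}+\{p_0,\partial_{\xi_i}L\}=0$, and at any point where $\partial_\xi L$ vanishes this simplifies, using $\partial_{\xi_k}\partial_{\xi_i}p_0=g^{ik}$, to $\partial_s F_i=-g^{ik}\partial_{x^k}L$. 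Now the hypothesis $\Lambda\subset\ml{R}(L)$ forces $dL\neq 0$ along $\Lambda$, so the vanishing of $\partial_\xi L$ at $\phi(s_0,t_0)$ implies $\partial_xL(\phi(s_0,t_0))\neq 0$; invertibility of $(g^{ik})$ then yields $\partial_s F(s_0,t_0)\neq 0$ at \emph{every} $(s_0,t_0)\in\tilde\Sigma$. Applying the implicit function theorem to any coordinate $F_i$ with $\partial_s F_i\neq 0$ shows that $\tilde\Sigma$ is locally contained in the smooth $1$-dimensional submanifold $\{F_i=0\}$, which has planar Lebesgue measure zero. A countable covering of the compact set $\tilde\Sigma$ then gives $\operatorname{Leb}(\tilde\Sigma)=0$.

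On the open complement $(\IR^2/G_{\rho_0})\setminus\tilde\Sigma$, the map $\pi\circ\phi$ is a local diffeomorphism between two-dimensional manifolds, so the change-of-variables formula shows that the pushforward of $ds\,dt$ restricted there is absolutely continuous with respect to $\operatorname{vol}_g$. Together with the null contribution from $\tilde\Sigma$, this yields absolute continuity of the pushforward of the Haar measure on $\Lambda$; integrating against $\overline{\mu}$ via Proposition~\ref{p:biinv} then gives absolute continuity of $\nu$. The main obstacle is precisely the measure-zero step above: one must rule out the pathological possibility that $\Lambda\subset\{\partial_\xi L=0\}$, in which case $X_L$ would be vertical along $\Lambda$ and $\pi(\Lambda)$ would collapse onto a single closed geodesic, producing a singular pushforward. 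It is exactly the combination of the Poisson commutation $\{p_0,L\}=0$ with the non-criticality of $L$ on $\Lambda$ that forces $\partial_s F$ to be non-vanishing on $\tilde\Sigma$ and thereby makes the implicit function theorem applicable.
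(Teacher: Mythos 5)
Your argument is correct, and it reaches the theorem by a genuinely different route at the key step. Like the paper, you reduce via Proposition~\ref{p:biinv} to showing that the pushforward under $\pi$ of the Haar measure on each compact torus $\Lambda^{\rho_0}_{\Phi(\rho_0)}$ is absolutely continuous, and like the paper you identify the singular set of $\pi|_{\Lambda}$ with the locus where $d\pi(X_L)=0$ (using the orthogonality of $d\pi(X_L)$ and $d\pi(X_0)$ coming from $0$-homogeneity). The paper, however, handles that locus through Lemma~\ref{l:caus}: a Jacobi-field rigidity argument combined with the twist property of the vertical subbundle (Proposition 2.11 of \cite{Pa99}) shows that the caustic is a \emph{finite union of smooth closed curves transversal to the geodesic flow}, which is more structural information than the theorem needs; it also uses the orientable frame of Appendix C and the periodicity of the geodesic to get finiteness. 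You instead prove only what is needed --- that $\tilde\Sigma$ has zero Haar measure --- by a direct symplectic computation: the Jacobi identity applied to $\{p_0,L\}=0$ gives $\partial_s F_i=-g^{ik}\partial_{x^k}L$ on $\{\partial_\xi L=0\}$, and $dL\neq 0$ on $\ml{R}(L)$ then makes the implicit function theorem applicable, so $\tilde\Sigma$ is locally contained in curves. This is in effect an explicit evaluation of the covariant derivative of the paper's Jacobi field at its zeros (it equals minus the horizontal gradient of $L$, which is automatically orthogonal to $\dot\gamma$ by $\{p_0,L\}=0$), so it recovers the paper's transversality constructively rather than by contradiction, and it dispenses with both orientability and periodicity at this step; what it buys less of is the caustic structure itself, which the paper's lemma provides and which is of independent geometric interest. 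One cosmetic caveat: the formula ``Jacobian $=|\xi|_{g^*}|\partial_\xi L|_g$'' should be read with the chart-dependent norm of the tangent vector $d\pi(X_L)$, but the only fact you use --- that the rank of $d(\pi\circ\phi)$ drops exactly where $d\pi(X_L)=0$, since $d\pi(X_0)\neq 0$ and the two are orthogonal --- is correct, and the final passage from ``local diffeomorphism off a null set'' to absolute continuity of $\nu$ coincides with the paper's concluding computation.
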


Denote by $\ml{N}(L)$ the convex closure of the set of measures $\delta_{\pi\circ\gamma}$ where $\gamma\subset\mathring{T}^*M$ ranges over the orbits of the geodesic flow that are contained in $\text{Crit}(L)$. The following is a direct consequence of Theorem \ref{t:abscon}.
\begin{coro}\label{c:struct}
Suppose that $M$ is an orientable $P_l$-surface. The projection $\nu:=\pi_*\mu$ of a probability measure $\mu$ on $\mathring{T}^*M$ that is invariant by $\varphi^s$ and $\varphi_L^\tau$ can be decomposed as:
$$\nu = f \operatorname{vol}_g + \alpha \nu_{\emph{sing}}$$
where $f\in L^1(M)$, $\alpha\in[0,1]$ and $\nu_{\emph{sing}}\in\ml{N}(L)$.
\end{coro}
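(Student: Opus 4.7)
The plan is to decompose $\mu$ along the partition $\mathring{T}^*M = \ml{R}(L)\sqcup\operatorname{Crit}(L)$ and treat each piece separately. Both $\ml{R}(L)$ and $\operatorname{Crit}(L)$ are invariant under $\varphi^s$ and $\varphi_L^\tau$ (as noted just before Lemma \ref{l:inv}), so the restrictions $\mu_{\mathrm{ac}}:=\mu|_{\ml{R}(L)}$ and $\mu_{\mathrm{cr}}:=\mu|_{\operatorname{Crit}(L)}$ are again invariant by both flows. Set $\beta:=\mu(\ml{R}(L))$ and $\alpha:=1-\beta=\mu(\operatorname{Crit}(L))$.

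For the regular piece, if $\beta>0$ then $\beta^{-1}\mu_{\mathrm{ac}}$ is a probability measure on $\ml{R}(L)$ invariant by $\varphi^s$ and $\varphi_L^\tau$. Theorem \ref{t:abscon} gives $\pi_*(\beta^{-1}\mu_{\mathrm{ac}})=f'\operatorname{vol}_g$ with $f'\in L^1(M)$, and rescaling by $\beta$ yields $\pi_*\mu_{\mathrm{ac}}=f\operatorname{vol}_g$ with $f:=\beta f'\in L^1(M)$. If $\beta=0$, take $f\equiv 0$.

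For the singular piece, use Lemma \ref{l:inv}: $\operatorname{Crit}(L)$ is a union of orbits of $\varphi^s$. Assuming $\alpha>0$, apply the ergodic decomposition theorem to the $\varphi^s$-invariant probability measure $\alpha^{-1}\mu_{\mathrm{cr}}$ on the Polish space $\mathring{T}^*M$. Since $\varphi^s$ is periodic on each energy sphere, every $\varphi^s$-orbit supports a unique invariant probability measure (the normalized arc length), and these are precisely the ergodic $\varphi^s$-invariant probability measures on $\mathring{T}^*M$. Thus
\[
\alpha^{-1}\mu_{\mathrm{cr}}=\int_{\Omega}m_{\gamma(\omega)}\,dP(\omega),
\]
where each $m_{\gamma(\omega)}$ is the arc-length measure on some geodesic orbit $\gamma(\omega)\subset\operatorname{Crit}(L)$. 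Pushing forward by $\pi$ gives $\pi_* m_{\gamma(\omega)}=\delta_{\pi\circ\gamma(\omega)}$, hence $\alpha^{-1}\pi_*\mu_{\mathrm{cr}}$ is a weak-$\star$ barycentre of measures $\delta_{\pi\circ\gamma}$ with $\gamma\subset\operatorname{Crit}(L)$, i.e.\ an element $\nu_{\mathrm{sing}}\in\ml{N}(L)$.

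Combining the two contributions produces the claimed decomposition $\nu=f\operatorname{vol}_g+\alpha\nu_{\mathrm{sing}}$. The only point calling for care is the ergodic decomposition on the non-compact invariant set $\operatorname{Crit}(L)$; this is handled by the standard decomposition theorem for finite measures on a Polish space, together with the elementary observation that on each energy sphere $\{\|\xi\|=c\}$ the geodesic flow is periodic with period $l/c$, forcing every ergodic component to coincide with the Haar measure on a single orbit.
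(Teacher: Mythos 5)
Your argument is correct and is essentially the proof the paper has in mind when it calls Corollary~\ref{c:struct} a direct consequence of Theorem~\ref{t:abscon}: split $\mu$ over the bi-invariant partition $\ml{R}(L)\sqcup\operatorname{Crit}(L)$, apply Theorem~\ref{t:abscon} to the normalized regular part, and use the periodicity of the geodesic flow (Lemma~\ref{l:inv} plus the ergodic decomposition, whose ergodic components are the normalized orbit measures) to see that the critical part projects into $\ml{N}(L)$. Nothing further is needed.
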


\begin{rema}
For a ``generic" function $L$, the set $\text{Crit}(L)$ is formed by critical points that are non degenerate with respect to directions that are orthogonal to the geodesic flow and to the vertical vector field that generates the dilations on the fibers. In this case, for every $E>0$ the set $\text{Crit}(L)\cap p_0^{-1}(E)$ consists in a finite number of orbits of the geodesic flow that project onto a finite family of geodesics $\{\gamma_1,...,\gamma_r\}$ in $M$. The measures $\nu$ described in Corollary \ref{c:struct} then take the simple form:
$$\nu = f \operatorname{vol}_g+\sum_{j=1}^r c_j \delta_{\gamma_j},$$ 
where $c_j\geq 0$ for $j=1,...,r$ and $\sum_{j=1}^r c_j \leq 1$.
\end{rema}

Theorem \ref{t:abscon} essentially follows from a structure result for the projections of the Lagrangian tori $\Lambda_{(E,J)}^{\rho_0}$ onto $M$: the projection $\pi|_{\Lambda_{(E,J)}^{\rho_0}}$ is a local diffeomorphism onto its image except at points lying on a finite family of smooth curves (called sometimes a \emph{caustic}). See also Lemma 2.1 in \cite{BiPo89}.

\begin{lemm}\label{l:caus}
Let $\rho_0\in\ml{R}_c(L)$; then there exist a finite family $\{\Gamma_{\rho_0}^j\}_{j=1}^n$ of smooth disjoint closed curves contained in $\Lambda_{(E,J)}^{\rho_0}$ that are traversal to the geodesic flow and such that $\Gamma_{\rho_0}^{\text{caus}}:=\bigsqcup_{j=1}^n \Gamma_{\rho_0}^j$ is exactly the set of singular points of $\pi|_{\Lambda_{(E,J)}^{\rho_0}}:\Lambda_{(E,J)}^{\rho_0}\longrightarrow M$.
\end{lemm}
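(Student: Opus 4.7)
My plan is to analyze the singular set of $\pi|_{\Lambda_{(E,J)}^{\rho_0}}$ by combining an algebraic identity coming from the $0$-homogeneity of $L$ with a Jacobi field interpretation of the projection of $X_L$ along geodesics. First I would use the parametrization $\phi_{\rho_0}$ to identify the tangent bundle of the torus with the span of $X_0$ and $X_L$. A direct coordinate computation shows that
$$\la d\pi(X_L),d\pi(X_0)\ra_g=\xi_j\partial_{\xi_j}L,$$
which vanishes by Euler's relation since $L$ is $0$-homogeneous in $\xi$. Because $d\pi(X_0)\neq 0$ on $\mathring{T}^*M$, a point $\rho\in \Lambda_{(E,J)}^{\rho_0}$ is a singular point of $\pi|_{\Lambda_{(E,J)}^{\rho_0}}$ if and only if $d\pi(X_L(\rho))=0$, which reduces the problem to describing the zero set of a single tangential vector field on the torus.

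Next I would use the fact that $[X_0,X_L]=0$ in order to view $s\mapsto Y(s):=d\pi(X_L(\varphi^s(\rho)))$ along $\gamma_\rho(s):=\pi(\varphi^s(\rho))$ as a Jacobi field, arising as the transverse variation of the geodesic family $(s,\tau)\mapsto\pi(\varphi^s\circ\varphi_L^\tau(\rho))$. The identity above gives $Y(s)\perp\gamma_\rho'(s)$, so $Y(s)=y(s)N(s)$ with $N$ the parallel unit normal and $y$ satisfying the scalar Jacobi equation $y''+K(\gamma_\rho(s))y=0$, where $K$ is the Gaussian curvature. By uniqueness for this linear ODE, on each geodesic orbit along which $y\not\equiv 0$ every zero of $y$ is simple, so $X_0\cdot y\neq 0$ at points of $\{y=0\}$. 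The implicit function theorem then shows that near such a point the set $\{d\pi(X_L)=0\}$ is locally a smooth embedded one-submanifold of $\Lambda_{(E,J)}^{\rho_0}$ transverse to $X_0$.

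The remaining and hardest step will be to rule out the degenerate alternative: the existence of a geodesic orbit inside $\Lambda_{(E,J)}^{\rho_0}$ along which $y\equiv 0$. For such an orbit $X_L$ would be everywhere vertical, so the entire two-dimensional $\IR^2$-action through this orbit would collapse under $\pi$ to at most a finite union of closed geodesics, contradicting the fact that, for $\rho_0\in\ml{R}_c(L)$, the map $\pi\circ\phi_{\rho_0}$ has two-dimensional image outside a nowhere dense set (one can also invoke Lemma~2.1 of~\cite{BiPo89}, referenced by the authors, where precisely this genericity of the caustic is treated in the integrable setting). Combining the local smoothness established above with the closedness of $\Sigma:=\{d\pi(X_L)=0\}$ in the compact torus $\Lambda_{(E,J)}^{\rho_0}$ and its transversality to $X_0$, one concludes that $\Sigma$ is a finite disjoint union of smooth embedded closed curves, each transverse to the geodesic flow, which is exactly the statement of the lemma.
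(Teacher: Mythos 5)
Your overall route coincides with the paper's: you reduce the singular set of $\pi|_{\Lambda_{(E,J)}^{\rho_0}}$ to $\{d\pi(X_L)=0\}$ using the orthogonality coming from $0$-homogeneity, you interpret $d\pi(X_L)$ along a geodesic as a (normal) Jacobi field, you get simplicity of its zeros from the scalar Jacobi equation, and you invoke the implicit function theorem; your scalar function $y$ plays exactly the role of the paper's $F(t,s)=g(d\pi(X_L),d\pi(W))$. The genuine gap is in the step you yourself flag as the hardest: excluding a geodesic orbit inside $\Lambda_{(E,J)}^{\rho_0}$ along which $d\pi(X_L)\equiv 0$. Your argument --- that such an orbit would force ``the entire two-dimensional $\IR^2$-action to collapse under $\pi$'' to a union of closed geodesics --- does not follow: verticality of $X_L$ along a single orbit only says that the $\tau$-derivative of $\pi\circ\phi_{\rho_0}$ vanishes on that one-parameter family, a first-order degeneracy there; it says nothing about the orbits with $\tau\neq 0$, and a fold caustic over the projected geodesic (with two-dimensional image of the torus) is precisely the expected picture, so no contradiction is produced. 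Moreover the auxiliary claim that $\pi\circ\phi_{\rho_0}$ has two-dimensional image outside a nowhere dense set is asserted without proof and is essentially of the same nature as the lemma being proved; and falling back on Lemma 2.1 of \cite{BiPo89} would amount to citing the statement rather than proving it.

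The paper closes this case via the twist property of the vertical subbundle (Proposition 2.11 of \cite{Pa99}) applied to the Lagrangian planes $T_{\rho}\Lambda_{(E,J)}^{\rho_0}$: the set of times $t$ for which $\operatorname{Ker}(d_{\varphi^t\rho}\pi)\cap T_{\varphi^t\rho}\Lambda_{(E,J)}^{\rho_0}\neq\{0\}$ is discrete, which is incompatible with $X_L$ being vertical for every $t$. Note also that your own Jacobi-field setup yields a quick repair: since the flows commute, $X_L(\varphi^s\rho_1)=d\varphi^s\,X_L(\rho_1)$, so $s\mapsto d\pi\bigl(X_L(\varphi^s\rho_1)\bigr)$ is the Jacobi field with initial data $\bigl(d\pi\,X_L(\rho_1),\,\mathcal{K}_{\rho_1}(X_L(\rho_1))\bigr)$; if it vanishes identically, so does its covariant derivative, hence both data vanish, forcing $X_L(\rho_1)=0$, i.e. $d_{\rho_1}L=0$, which contradicts $\rho_1\in\ml{R}(L)$. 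With that step repaired, your concluding argument (a closed embedded one-dimensional submanifold of the compact torus transverse to $X_0$ is a finite disjoint union of closed curves) is sound and slightly more direct than the paper's continuation along the curves $t_j(s)$.
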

\begin{proof}
Let $\{X_0,W,Y_0,U\}$ be the orthogonal frame on $\mathring{T}^*M$ defined in Appendix C. Since $L$ is $0$-homogeneous with respect to $\xi$, the Hamiltonian vector field $X_L$ has no component in $X_0$, that is:
$$g_{\rho}^S(X_L(\rho), X_0(\rho))=g_{\pi(\rho)}(d_{\rho}\pi (X_L(\rho)), d_{\rho}\pi (X_0(\rho)))=0.$$
The tangent space of $\Lambda_{(E,J)}^{\rho_0}$ at a point $\rho$ is spanned by $\{X_0(\rho), X_L(\rho)\}$. The points $\rho\in \Lambda_{(E,J)}^{\rho_0}$ close to which $\pi|_{\Lambda_{(E,J)}^{\rho_0}}$ is not a local diffeomorphism are precisely those at which $d_{\rho}\pi (X_L(\rho))=0$.
Define:
$$F(t,s):=g_{\pi(\phi_{\rho_0}(t,s))}(d_{\phi_{\rho_0}(t,s)}\pi (X_L(\phi_{\rho_0}(t,s))), d_{\phi_{\rho_0}(t,s)}\pi (W(\phi_{\rho_0}(t,s)))).$$
One has:
$$F(t,s)=0 \Longleftrightarrow X_L(\phi_{\rho_0}(t,s)) \in \text{Ker}(d_{\phi_{\rho_0}(t,s)}\pi).$$
We are going to apply the implicit function Theorem to the equation $F(t,s)=0$ in order to prove our claim. 
Note that $J_s(t):=d_{\phi_{\rho_0}(t,s)}\pi (X_L(\phi_{\rho_0}(t,s)))=\partial_s(\pi\circ\phi_{\rho_0})(t,s)$ is a Jacobi field along the geodesic $\gamma_s(t):=\pi(\varphi^t(\varphi_L^s(\rho_0))$. Write $Z_s(t):=d_{\phi_{\rho_0}(t,s)}\pi (W(\phi_{\rho_0}(t,s)))$. One has: 
$$\partial_t F(t,s)= g_{\pi(\phi_{\rho_0}(t,s))}\left(\frac{DJ_s}{dt}(t), Z_s(t)\right)
							+g_{\pi(\phi_{\rho_0}(t,s))}\left(J_s(t), \frac{DZ_s(t)}{dt}\right),$$
where $\frac{D}{dt}$ is the covariant derivative along the curve $\gamma_s(t)$. If $F(t_0,s_0)=0$ then $J_{s_0}(t_0)=0$ but:
$$g_{\pi(\phi_{\rho_0}(t_0,s_0))}\left(\frac{DJ_{s_0}}{dt}(t_0), Z_{s_0}(t_0)\right)\neq 0;$$
the reason for this is that if that term were to vanish, this would imply that $\frac{D}{dt}J_{s_0}(t_0)$ is proportional to $\gamma '(t_0)$. Therefore, the Jacobi field $J_{s_0}(t)$ should be proportional to $t\gamma '(t)$ which forces $J_{s_0}$ to vanish identically. In other words, $X_L(\phi_{\rho_0}(t,s_0))\in\text{Ker}( d_{\phi_{\rho_0}(t,s_0)}\pi)$ for every $t$. But this is a contradiction, since the values of $t$ for which that property holds turn out to form a discrete set in $\IR$, as we show next. 

This fact is a consequence of the twist property of the vertical subbundle as stated in \cite{Pa99}: recall that $\Lambda_{(E,J)}^{\rho_0}$ is a Lagrangian submanifold of $T^*M$; applying Proposition 2.11 of \cite{Pa99} to the Lagrangian subspaces $T_{\rho}\Lambda_{(E,J)}^{\rho_0}$ with $\rho\in\Lambda_{(E,J)}^{\rho_0}$, we deduce that, for every $s_0$ the set of the $t\in\IR$ such that: 
$$\text{Ker}( d_{\phi_{\rho_0}(t,s_0)}\pi)\cap T_{\phi_{\rho_0}(t,s_0)}\Lambda_{(L,J)}^{\rho_0}=\text{Ker}( d_{\phi_{\rho_0}(t,s_0)}\pi)\cap d_{\varphi_L^{s_0}(\rho_0)}\varphi_{0}^t(T_{\varphi_L^{s_0}(\rho_0)}\Lambda_{(E,J)}^{\rho_0})\neq\{0\},$$
is discrete. 

If the geodesic $\varphi^t(\rho_0)$ is $q$-periodic, this set is in fact finite modulo $q\IZ$. Set $s_0=0$ and let $\{t_1,...,t_r\}\subset\IR/q\IZ$ be the set at which $F(t,0)=0$. We can apply the implicit function Theorem to conclude the existence of a unique family of smooth functions $t_j(s)$ with $s\in\IR/r(J)\IZ$ satisfying $t_j(0)=t_j$ and $F(t_j(s),s)=0$. Taking into account that $\partial_t F(t,s)\neq 0$ as soon as $F(t,s)=0$, we conclude that two curves $(t_j(s),s)$ and $(t_i(s),s)$ are either disjoint or they coincide. Let us relabel these curves in order to have $(t_j(s),s)$ with $j=1,...n$ are disjoint. Since these curves are the unique solutions to the equation $F(t,s)=0$, the trajectories $(t_j(s),s)$ must be closed. Therefore, the smooth curves $\Gamma_{\rho_0}^j:=\{\phi_{\rho_0}(t_j(s),s):s\in\IR/r(J)\IZ\}$ with $j=1,...,n$ are disjoint and closed. 
\end{proof}

To conclude the proof of Theorem \ref{t:abscon} let $B\subset M$ be of zero Riemannian measure. By construction,
$$\nu(B) = \int_{\Phi(\ml{R})(L)}\lambda_{E,J}(\pi^{-1}(B)\cap \Lambda_{(E,J)}) \overline{\mu}(dE,dJ).$$
Fix a connected component $\Lambda_{(E,J)}^{\rho_0}$ in $\Lambda_{(E,J)}$, let $\Omega_{\rho_0}$ be any connected component of $\Lambda_{(E,J)}^{\rho_0}\setminus \Gamma_{\rho_0}^{\text{caus}}$. Then $\lambda_{E,J}(\Gamma_{\rho_0}^{\text{caus}})=0$ by Lemma \ref{l:caus} and $\pi |_{\Omega_{\rho_0}}$ is a local diffeomorphism; therefore, $\pi^{-1}(B)\cap\Omega_{\rho_0}$ has $\lambda_{E,J}$-measure equal to zero. Hence,
$$\lambda_{E,J}(\pi^{-1}(B)\cap \Lambda_{(H,J)}^{\rho_0}) =0. $$
This shows that $\nu(E)=0$ and therefore, $\nu$ is absolutely continuous with respect to Riemannian mesure.

\section{Quantum Loschmidt echo on Zoll manifolds}

In this section, we revisit the proof of section~\ref{s:averaging} in order to study a quantity which is related to the so called quantum Loschmidt echo defined in the section~\ref{s:main}. Precisely, we prove Proposition~\ref{t:loschmidt}. For every $b$ in $\mathcal{C}^{\infty}_c(T^*M)$ and every $t$ in $\IR$, we define
$$M_{\hbar}(t\tau_{\hbar})(b):=\langle v_{\hbar}(t\tau_{\hbar}),\Oph(b) v_{\hbar}^0(t\tau_{\hbar})\rangle,$$
where 
\begin{itemize}
\item $(\tau_{\hbar})_{0<\hbar\leq 1}$ is a scale of times satisfying $\lim_{\hbar\rightarrow 0}\tau_{\hbar}=+\infty$;
\item $v_{\hbar}(t\tau_{\hbar})$ is the solution of~\eqref{e:perturbed-schr};
\item $v_{\hbar}^0(t\tau_{\hbar})$ is the solution of~\eqref{e:perturbed-schr} when we pick $V\equiv 0$.
\end{itemize}
If we suppose that the sequence of initial data $(u_{\hbar})_{0<\hbar\leq 1}$ is normalized and that it satisfies~\eqref{e:hosc} and~\eqref{e:shosc}, then, proceeding as in appendix~\eqref{a:measures}, we can extract a subsequence $\hbar_n\rightarrow 0$ such that, for every $\theta\in\ml{C}^{\infty}_c(\IR)$ and for every $a$ in $\mathcal{C}^{\infty}_c(T^*M)$, one has
$$\lim_{n\rightarrow +\infty}\int_{\IR}\theta(t)M_{\hbar_n}(t\tau_{\hbar_n})(b)dt=\int_{\IR}\theta(t)\left(\int_{T^*M} b(x,\xi) M(t,dx,d\xi)\right)dt,$$
where $t\mapsto M(t)$ belongs to $L^{\infty}(\IR,\ml{M}(T^*M))$. Thanks to the frequency assumptions, the support of $M(t)$ is included in $T^*M\backslash (M\times\{0\}).$ Moreover, in the case where $\eps_{\hbar}\ll\sqrt{\hbar}$, we also observe that, for a.e. $t$ in $\IR$ and for every $s$ in $\IR$, one has
\begin{equation}\label{e:invariance-complex}\int_{T^*M} b(x,\xi) M(t,dx,d\xi)=\int_{T^*M} b\circ \varphi^s(x,\xi) M(t,dx,d\xi).\end {equation}
\begin{rema}
In order to make the link with Proposition~\ref{t:loschmidt}, we note that, thanks to~\eqref{e:hosc} and~\eqref{e:shosc}, one has
 $$\lim_{n\rightarrow +\infty}\int_{\IR}\theta(t)M_{\hbar_n}(t\tau_{\hbar_n})(1)dt=\int_{\IR}\theta(t)\left(\int_{T^*M}  M(t,dx,d\xi)\right)dt.$$
\end{rema}
Proposition~\ref{t:loschmidt} follows then from the previous remark and the following Proposition.
\begin{prop}\label{p:loschmidt} Suppose $(M,g)$ is a $P_l$-manifold and that
$$\lim_{\hbar\rightarrow 0^+}\eps_{\hbar}\hbar^{-\frac{1}{2}}=0.$$
Suppose also that one the following condition holds:
\begin{enumerate}
 \item $\eps_{\hbar}\hbar^{-\frac{3}{2}}\rightarrow +\infty$,
 \item $(M,g)$ is a Compact Rank One Symmetric Space.
\end{enumerate}
Then, for every $t\mapsto F(t)$ associated to a sequence of initial data with an unique semiclassical measure $\mu_0$, the following holds:

\medskip
\noindent i) If $\lim_{\hbar\rightarrow 0^+}\frac{\tau_{\hbar}\eps_{\hbar}^2}{\hbar}=0$, then, for every $t$ in $\IR$ and for every $b$ in $\ml{C}^{\infty}_c(T^*M)$,
$$M(t)(b)=\mu_0(\ml{I}_g(b)).$$
\medskip
\noindent ii)  If $\tau_{\hbar}=\frac{\hbar}{\eps_{\hbar}^2}$, then, for every $t$ in $\IR$ and for every $b$ in $\ml{C}^{\infty}_c(T^*M)$,
$$M(t)(b)=\mu_0\left(\ml{I}_g(b)e^{it\ml{I}_g(V)}\right).$$
\medskip
\noindent iii) If $\lim_{\hbar\rightarrow 0^+}\frac{\tau_{\hbar}\eps_{\hbar}^2}{\hbar}=+\infty$, then, for a.e. every $t$ in $\IR$ and for every $b$ in $\ml{C}^{\infty}_c(T^*M)$,
$$M(t)(b\ml{I}_g(V))=0.$$
\end{prop}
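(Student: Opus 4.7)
The plan is to adapt the averaging argument of Paragraph~\ref{ss:invariance} to the bilinear correlation
$$M_\hbar(t\tau_\hbar)(b) = \la u_\hbar,\, e^{it\tau_\hbar P_\eps(\hbar)/\hbar}\,\Oph(b)\, e^{-it\tau_\hbar P_0(\hbar)/\hbar}\, u_\hbar\ra.$$
After replacing $\Oph(b)$ by the averaged operator $\la B_\hbar\ra$ furnished by Theorem~\ref{t:av} (whose principal symbol is $\ml{I}_g(b)$), the bilinear form $\la v_\hbar(t\tau_\hbar), \la B_\hbar\ra v_\hbar^0(t\tau_\hbar)\ra$ converges, tested against $\theta\in\ml{C}^{\infty}_c(\IR)$, to $\int\theta(t) M(t)(\ml{I}_g(b))\,dt$, which by the geodesic-flow invariance~\eqref{e:invariance-complex} of $M(t)$ coincides with $\int\theta(t) M(t)(b)\,dt$ for a.e.~$t$.

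The next step is to differentiate in $t$ and exploit the identity $P_\eps\la B_\hbar\ra - \la B_\hbar\ra P_0 = [P_0,\la B_\hbar\ra] + \eps_\hbar^2 V\la B_\hbar\ra$. By Theorem~\ref{t:av} the commutator is $\ml{O}_{L^2\to L^2}(\hbar^3)$ (and vanishes identically for a CROSS), so after multiplication by $i\tau_\hbar/\hbar$ it contributes only $\ml{O}(\tau_\hbar\hbar^2)$; assumption (1), $\eps_\hbar\gg\hbar^{3/2}$, ensures $\tau_\hbar\hbar^2 = o(\tau_\hbar\eps_\hbar^2/\hbar)$, so this commutator term is always dominated by the potential term $\frac{i\tau_\hbar\eps_\hbar^2}{\hbar}\la v_\hbar, V\la B_\hbar\ra v_\hbar^0\ra$. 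The three regimes then follow along the same lines as in Propositions~\ref{p:CROSS},~\ref{p:invariance},~\ref{p:invariancesmall}: in case (i) ($\tau_\hbar\eps_\hbar^2/\hbar\to 0$) the right-hand side vanishes in the limit, so $M(t)(\ml{I}_g(b))$ is constant and equal to the initial value $\mu_0(\ml{I}_g(b))$; case (ii) ($\tau_\hbar\eps_\hbar^2 = \hbar$) produces a genuine distributional ODE; in case (iii) ($\tau_\hbar\eps_\hbar^2/\hbar\to+\infty$) one integrates by parts in $t$ against $\theta\in\ml{C}^{\infty}_c(\IR)$ and divides by $\tau_\hbar\eps_\hbar^2/\hbar$ to force the right-hand side to vanish in the limit.

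The main obstacle is closing the ODE in case (ii) into the explicit form stated. Passage to the limit yields $\partial_t M(t)(\ml{I}_g(b)) = iM(t)(V\ml{I}_g(b))$ in the sense of distributions, whereas the target formula $\mu_0(\ml{I}_g(b)\, e^{it\ml{I}_g(V)})$ features the Radon-transformed potential $\ml{I}_g(V)$. I would bridge this by combining~\eqref{e:invariance-complex} with the algebraic identity $\ml{I}_g(V\ml{I}_g(b)) = \ml{I}_g(V)\ml{I}_g(b)$, which holds because $\ml{I}_g(b)$ is already $\varphi^s$-invariant; this rewrites the right-hand side as $iM(t)(\ml{I}_g(V)\ml{I}_g(b))$. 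A direct computation then verifies that the candidate $t\mapsto\mu_0(\ml{I}_g(b)\, e^{it\ml{I}_g(V)})$ satisfies the same distributional ODE with the same initial datum $\mu_0(\ml{I}_g(b))$, and uniqueness concludes this case. Applying the same invariance identity in case (iii) converts $M(t)(V\ml{I}_g(b)) = 0$ into $M(t)(b\ml{I}_g(V)) = 0$ for almost every $t$, matching the statement of the proposition.
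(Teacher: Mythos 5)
Your proposal is correct and follows essentially the same route as the paper: average $\Oph(b)$ into $\la B_{\hbar}\ra$ via Theorem~\ref{t:av}, differentiate the correlation using $P_{\eps}(\hbar)\la B_{\hbar}\ra-\la B_{\hbar}\ra P_0(\hbar)=[P_0(\hbar),\la B_{\hbar}\ra]+\eps_{\hbar}^2V\la B_{\hbar}\ra$ with the commutator controlled by $\ml{O}(\tau_{\hbar}\hbar^2)$ (zero for a CROSS, negligible under hypothesis (1)), and then treat the three regimes, using the invariance~\eqref{e:invariance-complex} to replace $V$ by $\ml{I}_g(V)$ and closing case (ii) by the distributional ODE with initial datum $\mu_0$. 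Up to the usual sign-convention bookkeeping in the exponential, this is exactly the paper's argument.
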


\begin{rema} In the case (iii), we emphasize that we can deduce that 
$$\text{supp} M(t)\subset\{\ml{I}_g(V)=0\}.$$ 
\end{rema}

We will now give the proof of this Proposition which follows the same lines as the proofs in section~\ref{s:averaging}.

\begin{proof} Let $b$ be a smooth compactly supported function on $\mathring{T}^{*}M$. Using~\eqref{e:invariance-complex}, one has that
$$M(t)(b)=M(t)(\ml{I}_g(b)).$$
We start our proof by computing the following derivative
$$\frac{d}{dt}\left(\langle v_{\hbar}(t\tau_{\hbar}),\langle B_{\hbar}\rangle v_{\hbar}^0(t\tau_{\hbar})\rangle\right),$$
where $\langle B_{\hbar}\rangle$ is the operator from Theorem~\ref{t:av}. Using Theorem~\ref{t:av}, we obtain in fact
\begin{equation}\label{e:der-loschmidt}\frac{d}{dt}\left(\langle v_{\hbar}(t\tau_{\hbar}),\langle B_{\hbar}\rangle v_{\hbar}^0(t\tau_{\hbar})\rangle\right)=-\frac{i\eps_{\hbar}^2\tau_{\hbar}}{\hbar}\langle v_{\hbar}(t\tau_{\hbar}),V\langle B_{\hbar}\rangle v_{\hbar}^0(t\tau_{\hbar})\rangle+\ml{O}(\tau_{\hbar}\hbar^2).\end{equation}
We observe that, if $(M,g)$ is a compact rank-one symmetric space, then the remainder is in fact equal to $0$. The case (i) follows immediatly by integrating~\eqref{e:der-loschmidt} between $0$ and $t$. In the case where $\tau_{\hbar}=\frac{\hbar}{\eps_{\hbar}^2}$, one finds that, for every $\theta$ in $\ml{C}^{\infty}_c(\IR)$, one has 
$$\int_{\IR}\theta'(t)M(t)(\ml{I}_g(b))dt=i\int_{\IR}\theta(t)M(t)(\ml{I}_g(b)V)dt=i\int_{\IR}\theta(t)M(t)(\ml{I}_g(b)\ml{I}_g(V))dt,$$
where the first equality follows from~\eqref{e:der-loschmidt} and the second one from~\eqref{e:invariance-complex}. The case (ii) follows then from the above relation. Finally, when $\lim_{\hbar\rightarrow 0^+}\frac{\tau_{\hbar}\eps_{\hbar}^2}{\hbar}=+\infty$, we deduce from~\eqref{e:der-loschmidt} that
$$\int_{\IR}\theta(t)M(t)(\ml{I}_g(b)\ml{I}_g(V))dt=0,$$
from which (iii) follows according to~\eqref{e:invariance-complex}.
\end{proof}

\section{Relation to spectral problems: proof of Proposition~\ref{p:level-spacing}}
\label{s:level-spacing}

In this last section, we explain how time dependent semiclassical measures can be used to prove some results on the spectrum of the semiclassical Schr\"odinger operator 
$$P_{\eps}(\hbar):=-\frac{\hbar^2\Delta_g}{2}+\eps_{\hbar}^2V.$$
Precisely, we prove Proposition~\ref{p:level-spacing} from the introduction which establishes the existence of distinct eigenvalues which are asymptotically close to each other. In order to prove this result, we revisit an argument that was
given in~\cite{Ma08} -- proof of Theorem~$2$ from this reference. Thanks to Corollary~\ref{c:zoll-coro}, recall that any weak-$\star$ accumulation point $\nu$ of the sequence $(|\psi_{\hbar}|^2\text{vol}_g)_{0<\hbar\leq 1}$ verifying
$$P_{\eps}(\hbar)\psi_{\hbar}=E(\hbar)\psi_{\hbar},\quad \text{and}\ E(\hbar)\rightarrow E_0\neq 0,$$
must satisfy $$\nu(\{\gamma_0\})=0,$$
for every closed geodesic $\gamma_0$ that is not included in $\ml{C}(V)$. Let $\gamma_0$ be such a geodesic; let $\Gamma_0$ be a lift of $\gamma_0$ to $\mathring{T}^*M$ such that, for $(x_0,\xi_0)\in\Gamma_0$ one has
$\frac{\|\xi_0\|^2}{2}=E_0$ . We proceed by
contradiction and we suppose that there exists $\hbar_n\rightarrow
0$ as $n\rightarrow +\infty$ such that
$$\tilde{s}_0(\hbar_n):=\frac{\inf\{|E_j(\hbar_n)-E_k(\hbar_n)|:1\leq j\neq k \leq N(\hbar_n)\}}{\hbar_n\eps_{\hbar_n}^2}\rightarrow +\infty.$$
Our goal is to construct $\nu$ as above verifying
$\nu(\{\gamma_0\})>0$.

We fix a sequence $\tau_{\hbar_n}\rightarrow+\infty$ satisfying
\begin{equation}\label{e:hyp-time-1}\tau_{\hbar_n} \tilde{s}_0(\hbar_n)\eps_{\hbar_n}^2\rightarrow +\infty,\end{equation}
and
\begin{equation}\label{e:hyp-time-2}\tau_{\hbar_n}\eps_{\hbar_n}^2\rightarrow 0.\end{equation}
These two conditions are compatible thanks to our assumption on
the subsequence $\hbar_n\rightarrow 0^+$. In the following, we
omit the subscript $n$ in order to alleviate the notations.

\subsection{A sequence of ``good'' initial data}

We fix a sequence of initial data
$(u_{x_0,\xi_0}^{\hbar})_{0<\hbar\leq 1}$ whose semiclassical
measure is unique and given by the Dirac measure
$\delta_{x_0,\xi_0}$. This can be obtained from a sequence of
normalized coherent states -- see for instance Ch.~$5$
in~\cite{Zw12}. Then, we pick a cutoff function $0\leq \chi\leq 1$
in $\ml{C}^{\infty}_c(\IR)$ which is equal to $1$ in a small
neighborhood of $0$, say $[-1/2,1/2]$, and $0$ outside a slightly
larger neighborhood, say $[-1,1]$. We introduce the following
truncation of our coherent states:
$$\tilde{u}_{x_0,\xi_0}^{\hbar}:=\chi\left(\frac{P_0(\hbar)-E_0}{\delta_0}\right)u_{x_0,\xi_0}^{\hbar}.$$
Using results on functional calculus for semiclassical
pseudodifferential operators -- see for instance Ch.~$14$
in~\cite{Zw12}, one can verify that the operator
$\chi\left(\frac{P_0(\hbar)-E_0}{\delta_0}\right)$ is an
$\hbar$-pseudodifferential operator belonging to the set
$\Psi^{-\infty,0}(M)$ as defined in Appendix \ref{a:pdo}. Recall that its
principal symbol is
$$\chi_{\hbar}^0(x,\xi):=\chi\left(\frac{\|\xi\|^2/2-E_0}{\delta_0}\right).$$
As the semiclassical measure of the sequence
$(u_{x_0,\xi_0}^{\hbar})_{0<\hbar\leq 1}$ is the Dirac measure in
$(x_0,\xi_0)$, one can observe that
\begin{equation}\label{e:truncated}\|u_{x_0,\xi_0}^{\hbar}-\tilde{u}_{x_0,\xi_0}^{\hbar}\|_{L^2}=o(1).\end{equation}
Thanks to the Calder\'on-Vaillancourt Theorem, one can also verify
that
\begin{equation}\label{e:taylor}\left\|\chi\left(\frac{P_0(\hbar)-E_0}{\delta_0}\right)-\chi\left(\frac{P_{\eps}(\hbar)-E_0}{\delta_0}\right)\right\|_{L^2(M)\rightarrow L^2(M)}=o(1).\end{equation}
Denote by $\Pi_{E_j(\hbar)}$ the spectral projector corresponding
to the eigenvalue $E_j(\hbar)$, one has
$$\tilde{u}_{x_0,\xi_0}^{\hbar,\eps}:=\chi\left(\frac{P_{\eps}(\hbar)-E_0}{\delta_0}\right)u_{x_0,\xi_0}^{\hbar}=\sum_{j=1}^{N(\hbar)}\chi\left(\frac{E_j(\hbar)-E_0}{\delta_0}\right)\Pi_{E_j(\hbar)}u_{x_0,\xi_0}^{\hbar}.$$
Recall that the eigenvalues $E_j(\hbar)$ (and the spectral
projectors) depend implicitly on $\eps_{\hbar}$. When
$\Pi_{E_j(\hbar)}u_{x_0,\xi_0}^{\hbar}\neq 0$, we define
$$v_{\hbar,\eps}^j:=\frac{\Pi_{E_j(\hbar)}u_{x_0,\xi_0}^{\hbar}}{\|\Pi_{E_j(\hbar)}u_{x_0,\xi_0}^{\hbar}\|_{L^2}},$$
otherwise, we set $v_{\hbar,\eps}^j=0$. Then, we write
$$\tilde{u}_{x_0,\xi_0}^{\hbar,\eps}=\sum_{j=1}^{N(\hbar)}\chi\left(\frac{E_j(\hbar)-E_0}{\delta_0}\right)\left\|\Pi_{E_j(\hbar)}u_{x_0,\xi_0}^{\hbar}\right\|_{L^2}v_{\hbar,\eps}^j.$$
Finally, we introduce
$c_{\hbar,\eps}^j:=\chi\left(\frac{E_j(\hbar)-E_0}{\delta_0}\right)^2\left\|\Pi_{E_j(\hbar)}u_{x_0,\xi_0}^{\hbar}\right\|_{L^2}^2$.
Note that this quantity is equal to $0$ when $v_{\hbar,\eps}^j=0$. Using~\eqref{e:truncated} and~\eqref{e:taylor}, one can
verify that
\begin{equation}\label{e:convex-sum}\sum_{j=1}^{N(\hbar)}c_{\hbar,\eps}^j=1+o(1).\end{equation}

\subsection{Semiclassical measures of the Schr\"odinger equation}

Let $\theta$ be a smooth function on $\IR$ such that
$\hat{\theta}$ is compactly supported and satisfies
$\hat{\theta}(0)=1.$ We also fix $b$ in $\ml{C}^{\infty}_c(\mathring{T}^*M)$.
According to Propositions~\ref{p:CROSS} and~\ref{p:invariance} (point i) and to the fact that
$\tau_{\hbar}\eps_{\hbar}^{2}\rightarrow 0$, one has
$$\int_{\IR}\theta(t)\left\la  u_{x_0,\xi_0}^{\hbar},e^{\frac{it\tau_{\hbar}P_{\eps}(\hbar)}{\hbar}}\Oph(b)e^{\frac{it\tau_{\hbar}P_{\eps}(\hbar)}{\hbar}}u_{x_0,\xi_0}^{\hbar}\right\ra dt=\ml{I}_g(b)(x_0,\xi_0)+o(1),$$
as $\hbar$ goes to $0$. Using the conventions introduced in the
previous paragraph, one deduces then
$$\sum_{1\leq j,k\leq N(\hbar)}\hat{\theta}\left(\frac{\tau_{\hbar}(E_j(\hbar)-E_k(\hbar)}{\hbar}\right) \left(c_{\hbar,\eps}^jc_{\hbar,\eps}^k\right)^{\frac{1}{2}}\left\la v_{\hbar,\eps}^j,\Oph(b)v_{\hbar,\eps}^k\right\ra =\ml{I}_g(b)(x_0,\xi_0)+o(1).$$
For $j\neq k$, one has
$$\left|\frac{\tau_{\hbar}(E_j(\hbar)-E_k(\hbar))}{\hbar}\right|\geq \tau_{\hbar}\tilde{s}_0(\hbar)\eps_{\hbar}^{2},$$
which tends to $+\infty$ according to~\eqref{e:hyp-time-1}. Thus,
we get, as $\hbar$ goes to $0$,
\begin{equation}\label{e:convex-sum-measure}\sum_{j=1}^{N(\hbar)} c_{\hbar,\eps}^j\left\la v_{\hbar,\eps}^j,\Oph(b)v_{\hbar,\eps}^j\right\ra =\ml{I}_g(b)(x_0,\xi_0)+o(1).\end{equation}
Recall that $v_{\hbar,\eps}^j$ is either $0$ or a normalized state in $L^2(M)$
which satisfies
$P_{\eps}(\hbar)v_{\hbar,\eps}^j=E_j(\hbar)v_{\hbar,\eps}^j.$ We
will deduce from this asymptotic formula the existence of an
accumulation element $\nu$ satisfying
$\nu(\gamma_0)=1$, and thus we will obtain the expected
contradiction as $\gamma_0$ is not cointained in $\ml{C}(V)$.

\subsection{Estimating the variance}

We now remark that the measure $b\mapsto\ml{I}_g(b)(x_0,\xi_0)$ is
an ergodic measure which can be written as a convex sum of almost
invariant and almost positive distribution. Thus, we can proceed
as in the proof of the quantum ergodicity Theorem~\cite{Zw12} to
construct a subsequence converging to this measure. For that
purpose, we start by estimating the variance:
$$V_{\hbar}(b):=\sum_{j=1}^{N(\hbar)} c_{\hbar,\eps}^j\left|\left\la v_{\hbar,\eps}^j,\Oph(b)v_{\hbar,\eps}^j\right\ra -\ml{I}_g(b)(x_0,\xi_0)\right|^2.$$
One can introduce $\chi_1$ a smooth compactly supported function
on $\mathring{T}^*M$ which is equal to $1$ for
$\frac{E_0}{2}\leq\|\xi\|^2\leq 8 E_0$ and which is invariant by
the geodesic flow. By construction of the states $v_{\hbar,\eps}^j$, we can rewrite $V_{\hbar}(b)$ as
follows:
$$V_{\hbar}(b)=\sum_{j=1}^{N(\hbar)} c_{\hbar,\eps}^j\left|\left\la v_{\hbar,\eps}^j,\Oph(\tilde{b}\chi_1)v_{\hbar,\eps}^j\right\ra \right|^2+\ml{O}(\hbar),$$
where $\tilde{b}=b-\ml{I}_g(b)(x_0,\xi_0).$ Combining the Egorov
Theorem to the fact that the $v_{\hbar,\eps}^j$ are eigenmodes,
one finds, for every $T>0$,
$$V_{\hbar}(b)=\sum_{j=1}^{N(\hbar)} c_{\hbar,\eps}^j\left|\left\la v_{\hbar,\eps}^j,\Oph\left(\frac{1}{T}\int_0^T(\tilde{b}\chi_1)\circ\varphi^s ds\right)v_{\hbar,\eps}^j\right\ra \right|^2+\ml{O}(\hbar),$$
where the constant in the remainder depends on $T$. Applying the
Cauchy-Schwarz inequality and the composition rule for
pseudodifferential operators, we obtain
$$V_{\hbar}(b)\leq\sum_{j=1}^{N(\hbar)} c_{\hbar,\eps}^j\left\la v_{\hbar,\eps}^j,\Oph\left(\left|\frac{1}{T}\int_0^T(\tilde{b}\chi_1)\circ\varphi^s ds\right|^2\right)v_{\hbar,\eps}^j\right\ra +\ml{O}(\hbar),$$
We now use the limit formula~\eqref{e:convex-sum-measure}, and we
derive that, for every $T>0$,
$$\limsup_{\hbar\rightarrow 0}V_{\hbar}(b)\leq\ml{I}_g\left(\left|\frac{1}{T}\int_0^T(\tilde{b}\chi_1)\circ\varphi^s ds\right|^2\right)(x_0,\xi_0).$$
We take the limit $T\rightarrow +\infty$, and we have, as
$\hbar\rightarrow 0^+$,
\begin{equation}\label{e:variance-estimate}
\sum_{j=1}^{N(\hbar)} c_{\hbar,\eps}^j\left|\left\la
v_{\hbar,\eps}^j,\Oph(b)v_{\hbar,\eps}^j\right\ra
-\ml{I}_g(b)(x_0,\xi_0)\right|^2=o(1).
\end{equation}

\subsection{Bienaym\'e-Tchebychev inequality}
As the eigenmodes are microlocalized on the energy layers
$E_0-\delta_0\leq \frac{\|\xi\|_x^2}{2}\leq E_0+\delta_0$, we fix
$(b_k)_{k\in\IN}$ a family of smooth functions in
$\ml{C}_c^{\infty}(\mathring{T}^*M)$ which defines by restriction a dense
subset (in the $\ml{C}^0$ topology) of the continuous functions on
$\{(x,\xi)\in T^*M: [E_0-2\delta_0,E_0+2\delta_0]\}$. We
reestablish the dependence in the parameter $n$ and we define, for
every $k$ in $\IN$ and every $n$ in $\IN$,
$$A_{\hbar_n}^k:=\left\{1\leq j\leq N(\hbar_n):\left|\left\la v_{\hbar_n,\eps}^j,\Op_{\hbar_n}(b_k)v_{\hbar_n,\eps}^j\right\ra -\ml{I}_g(b_k)(x_0,\xi_0)\right|^2\geq V_{\hbar_n}(a_k)^{\frac{1}{2}}\right\}.$$
From the Bienaym\'e-Tchebychev inequality, we get, as
$n\rightarrow+\infty$,
$$\sum_{j\in A_{\hbar_n}^k} c_{\hbar_n,\eps}^j=o(1).$$
For every $k\geq 0$, we set
$\mathbf{A}_{\hbar_n}^{k}:=\cup_{l=0}^k A_{\hbar_n}^{l}$ which
still satisfies
$$\sum_{j\in \mathbf{A}_{\hbar_n}^k} c_{\hbar_n,\eps}^j=o(1).$$
For every $k\in \IN$, we take $n_k$ larke enough to ensure that,
for every $n\geq k$, one has
$$\left|\sum_{j\in \mathbf{A}_{\hbar_n}^k} c_{\hbar_n,\eps}^j\right|\leq\frac{1}{k},$$
and we also impose the subsequence $n_k$ to be increasing. Then,
for every $n_k\leq n<n_{k+1}$, we let $\mathbf{A}_{\hbar_n}:=
\mathbf{A}_{\hbar_n}^k.$ Then, thanks to~\eqref{e:convex-sum}, one
has, as $n\rightarrow+\infty$,
$$\sum_{j\in \mathbf{A}_{\hbar_n}^c} c_{\hbar_n,\eps}^j=1+o(1).$$
By construction, this implies that there exists a sequence $1\leq
j_n\leq N(\hbar_n)$ such that, for every $k$ in $\IN$,
$$\lim_{n\rightarrow +\infty}\la v_{\hbar_n,\eps}^{j_n},\Op_{\hbar_n}(b_k)v_{\hbar_n,\eps}^{j_n}\ra =\ml{I}_g(b_k)(x_0,\xi_0).$$
By density of the family $b_k$ in the $\ml{C}^0$-topology, we find
that the limit measure is the measure carried by the closed
geodesic issued from $(x_0,\xi_0)$. In other words, we have
constructed a sequence of eigenmodes whose
semiclassical measure is carried by the closed geodesic issued
from $(x_0,\xi_0)$. As we have supposed that this closed geodesic is not included in $\text{Crit}(V)$, we obtain the
contradiction.

\appendix
\section{Semiclassical analysis on manifolds}

\label{a:pdo} In this appendix, we review some basic facts on
semiclassical analysis that can be found for instance
in~\cite{Zw12}. Recall that we define on $\mathbb{R}^{2d}$ the
following class of symbols:
$$S^{m,k}(\mathbb{R}^{2d}):=\left\{(b_{\hbar}(x,\xi))_{\hbar\in(0,1]}\in \ml{C}^{\infty}(\mathbb{R}^{2d}):|\partial^{\alpha}_x\partial^{\beta}_{\xi}b_{\hbar}|
\leq
C_{\alpha,\beta}\hbar^{-k}\langle\xi\rangle^{m-|\beta|}\right\}.$$
Let $M$ be a smooth Riemannian $d$-manifold without boundary.
Consider a smooth atlas $(f_l,V_l)$ of $M$, where each $f_l$ is a
smooth diffeomorphism from $V_l\subset M$ to a bounded open set
$W_l\subset\mathbb{R}^{d}$. To each $f_l$ correspond a pull back
$f_l^*:\ml{C}^{\infty}(W_l)\rightarrow \ml{C}^{\infty}(V_l)$ and a canonical
map $\tilde{f}_l$ from $T^*V_l$ to $T^*W_l$:
$$\tilde{f}_l:(x,\xi)\mapsto\left(f_l(x),(Df_l(x)^{-1})^T\xi\right).$$
Consider now a smooth locally finite partition of identity
$(\phi_l)$ adapted to the previous atlas $(f_l,V_l)$. That means
$\sum_l\phi_l=1$ and $\phi_l\in \ml{C}^{\infty}(V_l)$. Then, any
observable $b$ in $\ml{C}^{\infty}(T^*M)$ can be decomposed as 
$b=\sum_l b_l$, where $b_l=b\phi_l$. Each $b_l$ belongs to
$\ml{C}^{\infty}(T^*V_l)$ and can be pushed to a function
$\tilde{b}_l=(\tilde{f}_l^{-1})^*b_l\in \ml{C}^{\infty}(T^*W_l)$. As
in~\cite{Zw12}, define the class of symbols of order $m$ and index
$k$
\begin{equation}
\label{e:defpdo}S^{m,k}(T^{*}M):=\left\{(b_{\hbar}(x,\xi))_{\hbar\in(0,1]}\in
\ml{C}^{\infty}(T^*M):|\partial^{\alpha}_x\partial^{\beta}_{\xi}b_{\hbar}|\leq
C_{\alpha,\beta}\hbar^{-k}\langle\xi\rangle^{m-|\beta|}\right\}.
\end{equation}
Then, for $b\in S^{m,k}(T^{*}M)$ and for each $l$, one can
associate to the symbol $\tilde{b}_l\in S^{m,k}(\mathbb{R}^{2d})$
the standard Weyl quantization
$$\Op_{\hbar}^{w}(\tilde{b}_l)u(x):=
\frac{1}{(2\pi\hbar)^d}\int_{\IR^{2d}}e^{\frac{\imath}{\hbar}\langle
x-y,\xi\rangle}\tilde{b}_l\left(\frac{x+y}{2},\xi;\hbar\right)u(y)dyd\xi,$$
where $u\in\mathcal{S}(\mathbb{R}^d)$, the Schwartz class.
Consider now a smooth cutoff $\psi_l\in \ml{C}_c^{\infty}(V_l)$ such
that $\psi_l=1$ close to the support of $\phi_l$. A quantization
of $b\in S^{m,k}(T^*M)$ is then defined in the following
way~\cite{Zw12}:
\begin{equation}
\label{e:pdomanifold}\Op_{\hbar}(b)(u):=\sum_l
\psi_l\times\left(f_l^*\Op_{\hbar}^w(\tilde{b}_l)(f_l^{-1})^*\right)\left(\psi_l\times
u\right),
\end{equation}
where $u\in \ml{C}^{\infty}(M)$. This quantization procedure
$\Op_{\hbar}$ sends (modulo $\mathcal{O}(\hbar^{\infty})$)
$S^{m,k}(T^{*}M)$ onto the space of pseudodifferential operators
of order $m$ and of index $k$, denoted
$\Psi^{m,k}(M)$~\cite{Zw12}. It can be shown that the dependence
in the cutoffs $\phi_l$ and $\psi_l$ only appears at order $1$ in
$\hbar$ (Theorem $9.10$ in~\cite{Zw12}) and the principal symbol
map $\sigma_0:\Psi^{m,k}(M)\rightarrow
S^{m,k}/S^{m-1,k-1}(T^{*}M)$ is then intrinsically defined. Most
of the rules (for example the composition of operators, the Egorov
and Calder\'on-Vaillancourt Theorems) that hold on
$\mathbb{R}^{2d}$ still hold in the case of $\Psi^{m,k}(M)$. Finally, we denote by $\Psi^{-\infty,k}(M)$ the set $\cap_{m\in\IR}\Psi^{m,k}(M)$.

\section{Time-dependent semiclassical measures}
\label{a:measures}

The aim of this short appendix is to recall a few facts on the definition of time-dependent semiclassical measures -- we refer to~\cite{Ma09} for more details.

Let $(u_{\hbar})_{0<\hbar\leq 1}$ be a normalized sequence in $L^2(M)$ verifying the oscillation assumptions~\eqref{e:hosc} and~\eqref{e:shosc}. For a given scale of times $\tau:=(\tau_{\hbar})_{\hbar\rightarrow 0^+}$ satisfying
$$\lim_{\hbar\rightarrow 0^+}\tau_{\hbar}=+\infty,$$
we denoted the Wigner distribution by
\begin{equation}\label{e:wigner-app}\forall a\in \ml{C}^{\infty}_c(T^*M),\ w_{\hbar}(t\tau_{\hbar})(a)
:=\left\la v_{\hbar}(t\tau_{\hbar}),\Oph^w(a)v_{\hbar}(t\tau_{\hbar})\right\ra,
\end{equation}
where $v_{\hbar}(\tau')$ is the solution at time $\tau'$ of~\eqref{e:perturbed-schr} with initial condition $u_{\hbar}$. Using the Calder\'on-Vaillancourt Theorem, we deduce the existence of a constant $C>0$ and a positive integer $D$ depending only on the manifold $(M,g)$ such that
$$\left|\int_{\IR\times T^*M}a(t,x,\xi) w_{\hbar}(t\tau_{\hbar},dx,d\xi)dt\right|\leq C_K \sum_{|\alpha|\leq D}\hbar^{|\alpha|/2}\int_{\IR}\|\partial_{x,\xi}^{\alpha}a(t,\cdot)\|_{\ml{C}^0(T^*M)}dt,$$
for every $a\in \ml{C}^{\infty}_c(\IR\times T^*M)$. According to~\cite{Schw66} (Ch.~$3$), the sequence $(\mu_{\hbar}:(t,x,\xi)\mapsto w_{\hbar}(t\tau_{\hbar},x,\xi))_{\hbar>0}$ is relatively compact in $\ml{D}'(\IR\times T^*M)$. Thus, we can extract converging subsequences (for the weak-$\star$ topology). In particular, for any accumulation point $\mu$ of this sequence and every $a\in \ml{C}^{\infty}_c(\IR\times T^*M)$, one has
$$\left|\int_{\IR\times T^*M}a(t,x,\xi)\mu(dt,dx,d\xi)\right|\leq C\int_{\IR}\|a(t,\cdot)\|_{\ml{C}^0(T^*M)}dt.$$
Thus, $\mu$ can be extended to a continuous linear form on $L^1(\IR,\ml{C}^0_0(T^*M))$, where $\ml{C}^0_0(T^*M)$ denotes the set of continuous functions vanishing at infinity. Consequently, the limit distribution $t\mapsto\mu(t,\cdot)$ belongs to $L^{\infty}(\IR,\ml{M}(T^*M))$ (see for instance~\cite{DieUh77}), where $\ml{M}(T^*M)$ is the set of finite complex measures on $T^*M$. For any converging subsequence in $\ml{D}'(\IR\times T^*M)$ (which we do not relabel), we note that the following also holds, for every $\theta\in L^1(\IR)$, and for every $a\in \ml{C}^{\infty}_c(T^*M)$
$$\lim_{\hbar\rightarrow 0^+}\int_{\IR\times T^*M}\theta(t)a(x,\xi) w_{\hbar}(t\tau_{\hbar},dx,d\xi)dt=\int_{\IR\times T^*M}\theta(t)a(x,\xi) \mu(t,dx,d\xi)dt.$$

Finally\footnote{One can for instance follow the arguments given in Ch.~$5$ of~\cite{Zw12}.}, according to the G\aa{}rding inequality, the limit distribution is in fact a positive measure for a.e. $t$ in $\IR$. Moreover, the frequency assumptions~\eqref{e:hosc} and~\eqref{e:shosc} and the fact that $\eps_{\hbar}\rightarrow 0^+$ imply that, for every almost every $t$, $\mu(t)(\mathring{T}^*M)=1$. Using Egorov Theorem, one can also verify that, for a.e. $t$ in $\IR$, $\mu(t,\cdot)$ is invariant by the geodesic flow $\varphi^s.$

\section{Geometry of $T^*M$}

\label{s:geom-back}

In this appendix, we collect some classical results on Riemannian
and symplectic geometry that appear at different stages of this
work. Along the way, we recall classical notations that are used
all along this article. We refer for instance the reader
to~\cite{Be78, Pa99, Rug07, Web99} for
more details.

\subsection{Musical isomorphisms}

Recall that the Riemannian metric $g$ on $M$ induces two natural
isomorphisms
$$\flat :T_xM\rightarrow T_x^*M,\ v\mapsto g_x(v,.),$$
and its inverse $\sharp: T_x^*M\rightarrow T_xM.$ This natural
isomorphism induces a positive definite form on $T_x^*M$ for which
these isomorphisms are in fact isometries. We denote by $g^*$ the
corresponding metric.

\subsection{Horizontal and vertical subbundles} Let $\rho=(x,\xi)$
be an element in $T^*M$. Denote by $\pi: T^*M\rightarrow M$ the
canonical projection $(x,\xi)\mapsto x$. We introduce the
so-called \emph{vertical} subspace:
$$\ml{V}_{\rho}:=\text{Ker}(d_{\rho}\pi)\subset T_{\rho}T^*M.$$
The fiber $T_x^*M$ is a submanifold of $T^*M$ that contains the
point $(x,\xi)$. The tangent space to this submanifold at point
$(x,\xi)$ is the vertical subspace $\ml{V}_{\rho}$ and it can be
canonically identified with $T_x^*M$. We will now define the
connection map. For that purpose, we fix $Z$ in $T_{\rho}T^*M$ and
$\rho(t)=(x(t),\xi(t))$ a smooth curve in $T^*M$ such that
$\rho(0)=\rho$ and $\rho'(0)=Z$. The connection map
$\mathcal{K}_{\rho}:T_{\rho}T^*M\mapsto T_x^*M$ is the following
application:
$$\mathcal{K}_{\rho}(Z):=\frac{D}{dt}\rho(0)=\nabla_{x'(0)}\xi(0),$$
where $\frac{D}{dt}\rho(t)$ is the covariant derivative of
$\rho(t)$ along the curve $x(t)$. One can verify that this
quantity depends only on the initial velocity $Z$ of the curve
(and not on the curve itself) and the map is linear. The
\emph{horizontal} space is given by the kernel of this linear
application, i.e.
$$\ml{H}_{\rho}:=\text{Ker}(\ml{K}_{\rho})\subset T_{\rho}T^*M.$$
There exists a natural vector bundle isomorphism between the
pullback bundle $\pi^*(TM\oplus T^*M)\rightarrow T^*M$ and the
canonical bundle $TT^*M\rightarrow T^*M$. The restriction of this
isomorphism on the fibers above $\rho\in T^*M$ is given by
$$\theta(\rho):T_{\rho} T^*M\rightarrow T_{\pi(\rho)} M\oplus T_{\pi(\rho)}^*M), \quad Z\mapsto (y,\eta):=(d_{\rho}\pi (Z),\ml{K}_{\rho}(Z)).$$
These coordinates $(y,\eta)$ will allow us to express easily the
different structures on $T^*M$. For instance, the Hamiltonian vector field $X$ associated to $p_0$ (\emph{i.e.} the generator of the geodesic flow) satisfies $\theta(\rho)X(\rho)=(\pi(\rho)^\sharp,0)$.

\subsection{Symplectic structure on $T^*M$} Recall that the
canonical contact form on $T^*M$ is given by the following
expression:
$$\forall\rho=(x,\xi)\in T^*M,\ \forall Z\in T_{\rho}T^*M,\ \alpha_{x,\xi}(Z)=\xi(d_{\rho}\pi(Z)).$$
The canonical symplectic form on $T^*M$ can then be defined as
$\Omega=d\alpha$. Using our natural isomorphism, this symplectic
form can be written as
$$\forall Z_1\cong  (y_1,\eta_1)\in T_{\rho}T^*M,\ \forall Z_2\cong  (y_2,\eta_2)\in T_{\rho}T^*M,\ \Omega_{\rho}(Z_1,Z_2)=\eta_1(y_2)-\eta_2(y_1).$$

\subsection{Almost complex structure on $T^*M$} One can define
the following map from $T_xM\oplus T_x^*M$ to itself:
$$\tilde{J}_x(y,\eta)=(\eta^{\sharp},-y^{\flat}).$$
This map induces an almost complex structure on $T_{\rho} T^*M$
through the isomorphism $\theta(\rho)$. We denote this almost
complex structure by $J_{\rho}$.

\subsection{Riemannian metric on $T^*M$}

The Sasaki metric $g^S$ on $T^*M$ is then defined as
$$g^S_{\rho}(Z_1,Z_2):=g_x^*(\ml{K}_{\rho}(Z_1),\ml{K}_{\rho}(Z_2))+g_x(d_{\rho}\pi(Z_1),d_{\rho}\pi(Z_2)).$$
This is a positive definite bilinear form on $T_{\rho}T^*M$. The
important point is that \emph{this metric is compatible with the
symplectic structure on $T^*M$ through the almost complex
structure}. Precisely, one has, for every $(Z_1,Z_2)\in
T_{\rho}T^*M\times T_{\rho} T^*M$,
$$g^S_{\rho}(Z_1,Z_2)=\Omega_{\rho}(Z_1,J_{\rho}Z_2).$$
In fact, using the natural isomorphism, one has
$$\Omega_{\rho}(Z_1,J_{\rho}Z_2)=\eta_1(\eta_2^{\sharp})+y_2^{\flat}(y_1)=g^*_x(\eta_1,\eta_2)+g_x(y_1,y_2).$$

\subsection{A natural frame on $T^*M$}\label{r:dim2}

In the case $M$ is an oriented surface, one can introduce a ``natural
basis'' on $T_{\rho} T^*M$ as follows. Thanks to the fact that the
manifold $M$ is oriented with a Riemannian structure, one can
define a notion of rotation by $\pi/2$ in every cotangent space
$T_x^*M$ (which is of dimension $2$). Thus, given any $\xi\in T_x^*M\setminus \{0\}$ there exists a
unique $\xi^{\perp}$ such that $\{\xi,\xi^{\perp}\}$ is a direct
orthogonal basis with $\|\xi\|_x=\|\xi^{\perp}\|_x$. We use this to
define an orthogonal basis of $\ml{V}_{\rho}$ for $\rho\in\mathring{T}^*M$:
$$Y_0(\rho):=\left(\theta(\rho)\right)^{-1}(0,\xi),\ \text{and}\ U(\rho)=\left(\theta(\rho)\right)^{-1}(0,\xi^{\perp}).$$
Then, we can define an orthogonal basis of $\ml{H}_{\rho}$ as
follows
$$X_0(\rho)=J_{\rho}Y_0(\rho),\ \text{and}\ W(\rho)=J_{\rho}U(\rho).$$
Note that $X_0$ is the geodesic vector field and that the family $\{X_0(\rho),W(\rho),Y_0(\rho),U(\rho)\}$ forms a direct orthogonal basis of
$T_{\rho}\mathring{T}^*M$.

\end{document}